\theoremstyle{definition}
\newtheorem{prop}{Proposition}
\newtheorem{thm}{Theorem}
\newtheorem{cor}{Corollary}
\begin{document}
\begin{center}
{\Large On two-parameter solutions of simultaneous ultradiscrete Painlev\'{e} II equation with parity variables}\medskip\\
Hikaru Igarashi$^*$ and Kouichi Takemura$^* $\medskip\\
$^*$Department of Mathematics, Faculty of Science and Engineering, Chuo University, 1-13-27 Kasuga, Bunkyo-ku Tokyo 112-8551, Japan\medskip\\
\end{center}
\section*{Abstract}
We introduce a simultaneous ultradiscrete Painlev\'e II equation with parity variables, which is shown to be more suitable for studying two-parameter solutions than the single second-order ultradiscrete Painlev\'e II equation with parity variables.
We investigate several types of two-parameter solutions and the solutions which are related with the ultradiscrete limit of determinant type solutions of $q$-Painlev\'e II equation.
\section{Introduction}

The Painlev\'e equation is a non-linear ordinary differential equation of order two which has the Painlev\'e property, i.e. any movable singularity of solutions must be a pole.
The Painlev\'e equations appear frequently in the context of mathematical physics.
The second Painlev\'e equation (Painlev\'e II) is written as
\begin{equation}
\frac{d^2y}{dt^2} =2 y^3+2t y +c.
\end{equation}
In the 1990s', discrete analogues of the Painlev\'{e} equations had been found by considering alternatives of the Painlev\'e property. Ramani and Grammaticos \cite{RG} found the $q$-discrete Painlev\'e II equation ($q$-PII), which has several expression.
In this paper we adopt the expression of $q$-PII as follows:
\begin{equation}
(z(q\tau) z(\tau) + 1)(z(\tau) z(q^{-1}\tau) + 1) = \frac{a\tau^2z(\tau)}{\tau - z(\tau)}. \label{eq:qPII}
\end{equation}
Note that Painlev\'e II is recovered by the limit $q\to 1$ of $q$-PII.
Hamamoto, Kajiwara and Witte \cite{HKW} investigated special solutions of $q$-PII expressed in terms of determinants whose elements are expressed by $q$-Airy function in the case $a=q^{2N+1}$ and $N \in \mathbb{Z}$.
Another approach to investigate solutions of $q$-Painlev\'e equations is to investigate solutions in the case $q=0$.
The case $q=0$ can be realised by the ultradiscrete limit, and it is profitable to introduce a parity variable so that the ultradiscrete limit of special solutions of determinant type is considered (see \cite{IKMMS,IS,IST,IIT}).

The ultradiscrete Painlev\'e II equation with parity variables (abbreviated to p-ud PII) is described by using the parity variable $\zeta _m \in \{+1,-1\}$ and the amplitude variable $Z_ m \in {\mathbb R}$.
We assume $Q<0$.
Then p-ud PII is written as
\begin{align}
\max\Bigl[
&Z_{m+1} + 3Z_{m} + Z_{m-1} + S (\zeta_{m+1}\zeta_{m}\zeta_{m-1}), Z_{m+1} + 2Z_{m} + S (\zeta_{m+1}), \label{eq:udPIIsingle} \\
&2Z_{m} + Z_{m-1} + S (\zeta_{m-1}), Z_{m} + S (\zeta_{m}), Z_{m} + A + 2m Q + S (\zeta_{m}), \nonumber \\
&Z_{m+1} + 2Z_{m} + Z_{m-1} + mQ + S (-\zeta_{m+1}\zeta_{m-1}), \nonumber \\
&Z_{m+1} + Z_{m} + mQ + S (-\zeta_{m+1}\zeta_{m}), Z_{m} + Z_{m-1} + mQ + S (-\zeta_{m}\zeta_{m-1}) \Bigr] \nonumber \\
= \max\Bigl[
&Z_{m+1} + 3Z_{m} + Z_{m-1} + S (-\zeta_{m+1}\zeta_{m}\zeta_{m-1}), Z_{m+1} + 2Z_{m} + S (-\zeta_{m+1}), \nonumber \\
&2Z_{m} + Z_{m-1} + S (-\zeta_{m-1}), Z_{m} + S (-\zeta_{m}), Z_{m} + A + 2m Q + S (-\zeta_{m}), \nonumber \\
&Z_{m+1} + 2Z_{m} + Z_{m-1} + mQ + S (\zeta_{m+1}\zeta_{m-1}), \nonumber \\
&Z_{m+1} + Z_{m} + mQ + S (\zeta_{m+1}\zeta_{m}), Z_{m} + Z_{m-1} + mQ + S (\zeta_{m}\zeta_{m-1}), mQ \Bigr], \nonumber 
\end{align}
where the function $S:\{+1, -1\} \to \{0, -\infty\}$ is defined by 
\begin{align}
S (\omega) :=
\begin{cases}
0 & (\omega=+1), \\
-\infty & (\omega=-1).
\end{cases}
\end{align}
Note that p-ud PII is obtained from $q$-PII by setting $a= \exp(A/\varepsilon )$, $q= \exp(Q/\varepsilon )$, $\tau =q^m$ and $z(q^m ) = \zeta _m \exp (Z_m /\varepsilon )$ in Eq.(\ref{eq:qPII}) and by taking a limit $\varepsilon \to +0 $.
For details, see \cite{IKMMS,IIT}.

Isojima and the authors \cite{IIT} studied further the ultradiscrete limit of the determinant-type solutions \cite{HKW} in the case $a=q^{2N+1}$ and $N \in \mathbb{Z}_{\geq 0}$ by following the earlier studies \cite{IKMMS,IST}.
Consequently, some special solutions of p-ud PII were obtained.
An example of solutions to p-ud PII given in \cite{IIT} is described as
\begin{align}
(\zeta _m, Z_m)=
\begin{cases}
(+1,3m+21) & (m\le -19) \\
(+1,29) & (m=-18) \\
(+1,33) & (m=-17) \\
(+1,-32) & (m=-16) \\
(-1,32) & (m=-15)\\
(+1,30) & (m=-14) \\
(-1,-30) & (m=-13) \\
(+1,30) & (m=-12) \\
(+1,16) & (m=-11) \\
(-1,-16) & (m=-10) \\
(+1,-3m) & (-9 \le m \le -1)\\
((-1)^m ,0 ) & (m\geq 0),
\end{cases}
\label{eq:z3}
\end{align}
where the parameters are chosen as $Q=-3$ and $A=7Q$.
The graph of the solution in Eq.(\ref{eq:z3}) is written as Figure 1, where $\bullet $ (resp. $\circ $) represents the amplitute with $\zeta _m =+1 $ (resp. $\zeta _m =-1 $).
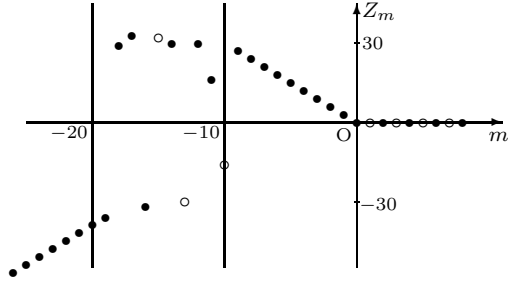
\begin{figure}
\begin{picture}(200,115)(-50,0)
\put(0,-7){\circle*{3}}
\put(5,-4){\circle*{3}}
\put(10,-1){\circle*{3}}
\put(15,2){\circle*{3}}
\put(20,5){\circle*{3}}
\put(25,8){\circle*{3}}
\put(30,11){\circle*{3}}
\put(35,14){\circle*{3}}
\put(40,79){\circle*{3}}
\put(45,83){\circle*{3}}
\put(50,18){\circle*{3}}
\put(55,82){\circle{3}}
\put(60,80){\circle*{3}}
\put(65,20){\circle{3}}
\put(70,80){\circle*{3}}
\put(75,66){\circle*{3}}
\put(80,34){\circle{3}}
\put(85,77){\circle*{3}}
\put(90,74){\circle*{3}}
\put(95,71){\circle*{3}}
\put(100,68){\circle*{3}}
\put(105,65){\circle*{3}}
\put(110,62){\circle*{3}}
\put(115,59){\circle*{3}}
\put(120,56){\circle*{3}}
\put(125,53){\circle*{3}}
\put(130,50){\circle*{3}}
\put(135,50){\circle{3}}
\put(140,50){\circle*{3}}
\put(145,50){\circle{3}}
\put(150,50){\circle*{3}}
\put(155,50){\circle{3}}
\put(160,50){\circle*{3}}
\put(165,50){\circle{3}}
\put(170,50){\circle*{3}}
\put(5,50){\vector(1,0){180}}
\put(130,-5){\vector(0,1){100}}
\put(80,-5){\line(0,1){100}}
\put(30,-5){\line(0,1){100}}
\put(132,90){{\footnotesize $Z_m$}}
\put(180,43){{\footnotesize $m$}}
\put(122,43){{\scriptsize O}}
\put(64,44){{\scriptsize $-10$}}
\put(14,44){{\scriptsize $-20$}}
\put(129,80){\line(1,0){2}}
\put(132,78){{\scriptsize $30$}}
\put(129,20){\line(1,0){2}}
\put(131,17){{\scriptsize $-30$}}
\end{picture}
\caption{An example of the ultradiscrete limit of a determinant-type solution}
\end{figure}

In this paper, we investigate p-ud PII (Eq.(\ref{eq:udPIIsingle})) and related equations.
It has been known that p-ud equation may not have uniqueness of solutions, and we explain it in the case of forward evolution of Eq.(\ref{eq:udPIIsingle}).
Unique evolution is assigned to the case that $(\zeta _{m+1}, Z_{m+1})$ is determined uniquely by Eq.(\ref{eq:udPIIsingle}) and the given values $(\zeta _{m-1}, Z_{m-1})$ and $(\zeta _{m}, Z_{m})$.
Conversely, indefinite evolution is assigned to the case that $(\zeta _{m+1}, Z_{m+1})$ is not determined uniquely by them.
Here we examine the uniqueness and indefiniteness by using the solution in Eq.(\ref{eq:z3}).
Let us investigate forward evolution with the initial values $(\zeta _{-17}, Z_{-17})= (+1,33)$ and $(\zeta _{-16}, Z_{-16})= (+1,-32)$.
We substitute $m=-16$ into Eq.(\ref{eq:udPIIsingle}), and we have 
\begin{align}
\max [ Z_{-15} -63 + S (\zeta_{-15} ),43, Z_{-15} + 17 + S (-\zeta_{-15}) ] \nonumber \\
= \max [Z_{-15} -63 + S (-\zeta_{-15} ), Z_{-15} + 17 + S (\zeta_{-15} ),49 ] . 
\end{align}
If $\zeta _{-15} =+1$, then we have $\max [ Z_{-15} -63,43] = \max [ Z_{-15} + 17,49 ]$ and it turns out that there is no solution.
Hence we have $\zeta _{-15} =-1$, $\max [ Z_{-15} +17,43] = \max [ Z_{-15} -63,49 ]$ and $Z_{-15} = 32$.
Therefore the evolution to $(\zeta _{-15}, Z_{-15}) = (-1,32)$ is unique.
Next we substitute $m=-15$ into Eq.(\ref{eq:udPIIsingle}). Then we have 
\begin{align}
\max [  Z_{-14} + 77 + S (-\zeta_{-14}), Z_{-14} + 77 + S (\zeta_{-14}), 45 ] \nonumber \\
= \max [ 101 , Z_{-14} + 77 + S (\zeta_{-14}), Z_{-14} + 77 + S (-\zeta_{-14}) ] .
\end{align}
If $Z_{-14} \geq 24$, then the ultradiscrete equation is satisfied.
Thus the evolution to $(\zeta _{-14}, Z_{-14}) $ is indefinite.

In the general setting, the condition that forward evolution in Eq.(\ref{eq:udPIIsingle}) is unique is determined in Proposition \ref{prop:Zsing3}.
Moreover, if the evolution is unique, then the amplitude function $Z_{m+1} $ is written in a simpler form 
\begin{equation}
Z_{m +1} = -Z_m +\max [0, A+2mQ -\max[0, mQ-Z_m ] -\max[0,Z_{m-1} +Z_{m}]] .
\end{equation}
However it is shown that Eq.(\ref{eq:udPIIsingle}) may not govern the solution for all $m \in \mathbb{Z}$.
Namely, there exists no solution to single p-ud PII such that any forward and backward evolution are unique for all $m \in \mathbb{Z}$ (see Theorem \ref{thm:nonentire}).

In order to avoid indefinite evolution, we introduce another variable to p-ud PII.
We set $\tau = q^m $ and $y(q^{m+1}) = z(q^{m+1}) z(q^{m}) + 1$ in Eq.(\ref{eq:qPII}).
Then Eq.(\ref{eq:qPII}) is written as the simultaneous equation
\begin{align}
& y(q^{m+1}) y(q^m) = \dfrac{a q^{2m}z(q^m)}{q^m - z(q^m)} , \quad  y(q^{m+1}) = z(q^{m+1}) z(q^m) + 1  . \label{eq:qPIIyzqm}
\end{align}
Note that the introduction of the variable $y(q^{m})$ is essentially due to Murata \cite{Mu} for ultradiscrete Painlev\'e II equation without parity variables.
Let $(\eta _m, Y_m)$ be the parity variable and the amplitude function determined by $y(q^m)= \eta _m \exp (Y_m /\varepsilon )$.
Then the corresponding p-ud equation is written as
\begin{align}
\max[ & mQ-Z_m +Y_{m+1}+Y_m+ S(\eta_{m+1}\eta_{m}) , \label{eq:udYYZ} \\
& Y_{m+1}+Y_m + S(-\zeta_{m}\eta_{m+1}\eta_{m}) , A+2m Q+ S(-\zeta_{m}) ] \nonumber\\
 = \max[ & mQ-Z_m +Y_{m+1}+Y_m + S(-\eta_{m+1}\eta_{m}) , \nonumber\\
& Y_{m+1}+Y_m + S(\zeta_{m}\eta_{m+1}\eta_{m}) , A+2m Q+ S(\zeta_{m}) ] , \nonumber \\
\max[ & Y_{m+1} +S(\eta_{m+1}) , Z_m+Z_{m+1}+S( -\zeta_{m+1}\zeta_{m}) ] \label{eq:udYZZ} \\
 = \max[ & Y_{m+1}+S(-\eta_{m+1}) , Z_m+Z_{m+1}+S( \zeta_{m+1}\zeta_{m}) ,0] , \nonumber 
\end{align}
which we will show in section \ref{sec:pudyz}.
The condition that the evolution by the simultaneous equations (\ref{eq:udYYZ}) and (\ref{eq:udYZZ}) is unique is written as $(\zeta _m , Z_m) \neq (+1,mQ)$ and $(\eta _{m+1} ,Y_{m+1}) \neq (+1,0)$ (see Corollary \ref{cor:uniindef}).
Moreover, if $(\zeta _m , Z_m) \neq (+1,mQ)$ and $(\eta _{m+1} ,Y_{m+1}) \neq (+1,0)$, then the equations are written in a simpler form, i.e. the amplitude functions satisfy
\begin{align}
Y_{m+1} + Y_m &= A+ 2m Q- \max [ mQ-Z_m, 0 ] , \label{eq:evolu2} \\
Z_{m+1} + Z_m &= \max[Y_{m+1}, 0] , \label{eq:evolu1} 
\end{align}
and the parity functions satisfy
\begin{equation}
\zeta_{m+1}\zeta_{m} =\left\{ 
\begin{array}{cc}
\eta _{m+1} & (Y_{m+1} >0 ),\\
-1 & ( Y_{m+1} \leq 0), 
\end{array}
\right.
 \; \eta_{m+1}\eta_{m} =\left\{ 
\begin{array}{cc}
\zeta _{m} & (Z_{m} <mQ ), \\
-1 & (Z_{m} \geq mQ ). 
\end{array}
\right. 
\label{eq:evolp}
\end{equation}
In section \ref{sec:etaYIIT}, we investigate the value of $(\eta _{m}, Y_{m}) $ for the ultradiscrete function $(\zeta _{m}, Z_{m}) $ obtained in \cite{IIT} and clarify a relationship with indefinite evolution.
We now explain it by the example in Eq.(\ref{eq:z3}).
We choose the initial values $(\zeta _{-17}, Z_{-17})= (+1,33)$ and $(\zeta _{-16}, Z_{-16})= (+1,-32)$ as the single equation for $(\zeta _{m}, Z_{m}) $.
We introduce the values of $(\eta _{m}, Y_{m}) $ which satisfy the simultaneous equation.
It follows from $Z_{-16} + Z_{-17} = \max[Y_{-16}, 0]$ that $Y_{-16} =1$ and also follows from $\zeta_{-16}\zeta_{-17} = \eta _{-16}$ that $\eta_{-16} = +1$.
By the evolution determined by Eqs.(\ref{eq:udYYZ}) and (\ref{eq:udYZZ}), 
the values $( \eta_ m , Y_m ) $ are determined for $-18 \leq m \leq -9$ as
\begin{align}
( \eta_ m , Y_m )=
\begin{cases}
(+1,0) & (m=-18) \\
(+1,62) & (m=-17) \\
(+1,1) & (m=-16) \\
(+1,-6) & (m=-15)\\
(-1,62) & (m=-14) \\
(-1,-11) & (m=-13) \\
(+1,-1) & (m=-12) \\
(+1,46) & (m=-11) \\
(+1,-18) & (m=-10) \\
(-1,11) & (m=-9 )
\end{cases}
\label{eq:y3}
\end{align}
and the values $(\zeta _{m}, Z_{m})$ for $-18 \leq m \leq -9$ coincide with the ones in Eq.(\ref{eq:z3}).
In the example, we have unique evolution for $-18 \leq m \leq -9$, although indefinite evolution occurs by the values $( \eta_{-18} , Y_{-18} )=  (+1,0)$ and $(\zeta_{-9} , Z_{-9} )=(+1,27)$.

In the master's thesis of the first author \cite{Iga}, p-ud limit $(\zeta _{m},Z_{m})  $ of the determinant-type solutions for the case $a=q^{2M+1}$ and $M \in \mathbb{Z}_{< 0}$ was investigated. 
We review the results of the p-ud limit and also investigate the value of $(\eta _{m}, Y_{m}) $.

In section \ref{sec:twoparameter}, we investigate two parameter solutions of p-ud PII.
Note that Murata \cite{Mu} had investigated two parameter solutions of ultradiscrete PII without parity variables, and our solutions include the patterns which did not appear in \cite{Mu}.

In section \ref{sec:perturb}, we apply the two parameter solutions to obtain the ones which is perturbed from the solutions in section \ref{sec:etaYIIT}.
As a perturbation of the solution in Eqs.(\ref{eq:z3}) and (\ref{eq:y3}), we investigate the solution whose initial value is given by $( \eta_{-18} , Y_{-18} )= (+1,0- \varepsilon)$ and $(\zeta _{-18}, Z_{-18} )= (+1,29)$ $(0<4 \varepsilon <1 )$. 
Then the indefiniteness of the solution disappears and a graph of the solution $(\zeta _{m},Z_{m})  $  is written as Figure 2.
See Eq.(\ref{eq:pertex1}) for the exact values.
Hence, if $m \leq -19  $ or $m \geq -8$, then the values of $(\zeta _m, Z_m)$ are completely different from Eq.(\ref{eq:z3}) (or Figure 1).
In other words, the p-ud limits of determinant-type solutions are not stable under the perturbation of initial values.
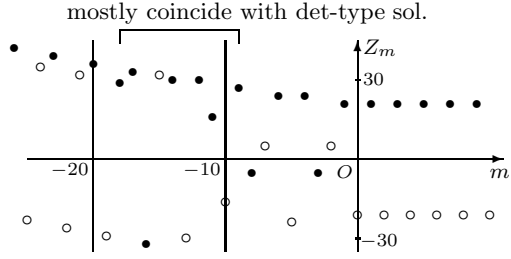
\begin{figure}
\begin{picture}(200,110)(-50,20)
\put(20,92){\circle*{3}}
\put(25,27){\circle{3}}
\put(30,85){\circle{3}}
\put(35,89){\circle*{3}}
\put(40,24){\circle{3}}
\put(45,82){\circle{3}}
\put(50,86){\circle*{3}}
\put(55,21){\circle{3}}
\put(60,79){\circle*{3}}
\put(65,83){\circle*{3}}
\put(70,18){\circle*{3}}
\put(75,82){\circle{3}}
\put(80,80){\circle*{3}}
\put(85,20){\circle{3}}
\put(90,80){\circle*{3}}
\put(95,66){\circle*{3}}
\put(100,34){\circle{3}}
\put(105,77){\circle*{3}}
\put(110,45){\circle*{3}}
\put(115,55){\circle{3}}
\put(120,74){\circle*{3}}
\put(125,26){\circle{3}}
\put(130,74){\circle*{3}}
\put(135,45){\circle*{3}}
\put(140,55){\circle{3}}
\put(145,71){\circle*{3}}
\put(150,29){\circle{3}}
\put(155,71){\circle*{3}}
\put(160,29){\circle{3}}
\put(165,71){\circle*{3}}
\put(170,29){\circle{3}}
\put(175,71){\circle*{3}}
\put(180,29){\circle{3}}
\put(185,71){\circle*{3}}
\put(190,29){\circle{3}}
\put(195,71){\circle*{3}}
\put(200,29){\circle{3}}
\put(25,50){\vector(1,0){180}}
\put(150,15){\vector(0,1){80}}
\put(100,15){\line(0,1){80}}
\put(50,15){\line(0,1){80}}
\put(152,90){{\footnotesize $Z_m$}}
\put(200,43){{\footnotesize $m$}}
\put(142,43){{\scriptsize $O$}}
\put(84,44){{\scriptsize $-10$}}
\put(34,44){{\scriptsize $-20$}}
\put(149,80){\line(1,0){2}}
\put(152,78){{\scriptsize $30$}}
\put(149,20){\line(1,0){2}}
\put(151,17){{\scriptsize $-30$}}
\put(40,103){{\small mostly coincide with det-type sol.}}
\put(60,99){\line(0,-1){5}}
\put(105,99){\line(0,-1){5}}
\put(60,99){\line(1,0){45}}
\end{picture}
\caption{A perturbed solution}
\end{figure}

This paper is organized as follows.  
In section \ref{sec:pudyz}, we derive the simultaneous equation (Eqs.(\ref{eq:udYYZ}) and (\ref{eq:udYZZ})) of p-ud PII and investigate some properties of the simultaneous p-ud PII and the single p-ud PII.
In section \ref{sec:etaYIIT}, we investigate the values of $(\eta _{m}, Y_{m}) $ for the ultradiscrete function $(\zeta _{m}, Z_{m}) $ obtained in \cite{Iga,IIT} and clarify a relationship with indefinite evolution.
In section \ref{sec:twoparameter}, we investigate two parameter solutions of p-ud PII.
In section \ref{sec:perturb}, we obtain the solutions which are perturbed from the ones in section \ref{sec:etaYIIT} by using two parameter solutions.
In section \ref{sec:concl}, we give concluding remarks.
In the appendix, we review a procedure of obtaining the p-ud limit $(\zeta _{m},Z_{m})$ of the determinant-type solutions in the case $a=q^{2M+1}$ and $M \in \mathbb{Z}_{< 0}$, which is based on \cite{Iga}.
Throughout this paper, we assume $Q<0$.

\section{Simultaneous ultradiscrete equation with parity variables} \label{sec:pudyz}
We rewrite the simultaneous equation of $q$-PII (Eq.(\ref{eq:qPIIyzqm})) as
\begin{align}
& y_{m+1} y_m  = \dfrac{a q^{2m}z_m}{q^m - z_m} , \quad y_{m+1} = z_{m+1} z_m + 1 . \label{eq:qPIIyzm}
\end{align}
We fix a value $Q(<0)$ and assume $0<q<1$.
Introduce a variable $\varepsilon >0$ by $q= \exp(Q/\varepsilon ) $ and write $a= \exp(A/\varepsilon ) $.
We assume that there exists a one-parameter family of the solution ($y_m (\varepsilon )$, $z_m(\epsilon ) $) $(\varepsilon $: positive and sufficielntly small$)$ such that 
\begin{equation}
 y_m (\varepsilon ) = \eta _m \exp (Y_m (\varepsilon )/\varepsilon ), \; z_m (\varepsilon ) = \zeta _m \exp (Z_m (\varepsilon )/\varepsilon )
\label{eq:qyzepsilon}
\end{equation}
and the limits $Y_m (\varepsilon ) \to Y_m$ and $Z_m (\varepsilon ) \to Z_m$ exist as $\varepsilon \to +0$, where $\eta _m $ and $\zeta _m $ represent the signs of $y_m (\varepsilon )$ and $z_m(\epsilon ) $.
Then we call $(\eta _m , Y_m)$ (resp. $(\zeta _m , Z_m)$) the p-ud analogue of $y_m (\varepsilon ) $ (resp. $z_m (\varepsilon )$).
On the first equation of Eq.(\ref{eq:qPIIyzm}), we multiply the denominator of the right hand side and substitute Eq.(\ref{eq:qyzepsilon}) into it.
We apply the formulas as
\begin{equation}
\eta _{m+1} \eta _m \zeta _m  = \exp (S(\eta _{m+1} \eta _m \zeta _m )/\varepsilon ) - \exp (S(-\eta _{m+1} \eta _m \zeta _m )/\varepsilon ) \quad  (\varepsilon >0),
\end{equation}
transpose the negative terms to the other side of equality, and multiply $e^{-Z_{m}(\varepsilon ) /\varepsilon} $.
Then we have
\begin{align}
& e^{(Y_{m+1}(\varepsilon ) +Y_{m}(\varepsilon ) +mQ -Z_{m}(\varepsilon ) + S(\eta _{m+1} \eta _m )) /\varepsilon} +e^{(A+2mQ + S(-\zeta _m )) /\varepsilon} \\
&  + e^{(Y_{m+1}(\varepsilon ) +Y_{m}(\varepsilon ) + S(-\eta _{m+1} \eta _m \zeta _m)) /\varepsilon} = e^{(Y_{m+1}(\varepsilon ) +Y_{m}(\varepsilon ) +mQ -Z_{m}(\varepsilon ) + S(-\eta _{m+1} \eta _m )) /\varepsilon} \nonumber \\
& + e^{(A + 2mQ + S(\zeta _m )) /\varepsilon}+ e^{(Y_{m+1}(\varepsilon ) +Y_{m}(\varepsilon ) + S(\eta _{m+1} \eta _m \zeta _m)) /\varepsilon} . \nonumber
\end{align}
It is easy to show that if the limits $X_j(\varepsilon) \to X_j $ $(\varepsilon \to +0)$ exist for $j=1,\dots ,n $, then
\begin{equation}
\lim_{\varepsilon \to +0} \varepsilon \log (e^{X_1(\varepsilon) /\varepsilon} +\dots + e^{X_n(\varepsilon) /\varepsilon}) = \max [X_1, \dots, X_n] . \label{eq:limit_furmula}
\end{equation}
Therefore we have the following ultradiscrete equation with parity variables:
\begin{align}
 \max[ & mQ-Z_m+Y_{m+1}+Y_m+ S(\eta_{m+1}\eta_{m}) , \label{eq:YZ2} \\
& Y_{m+1}+Y_m + S(-\zeta_{m}\eta_{m+1}\eta_{m}) , A+2m Q+ S(-\zeta_{m}) ] \nonumber \\
 = \max[ &  mQ-Z_m +Y_{m+1}+Y_m+ S(-\eta_{m+1}\eta_{m})  ,\nonumber\\
&   Y_{m+1}+Y_m + S(\zeta_{m}\eta_{m+1}\eta_{m}), A+2m Q+ S(\zeta_{m}) ], \nonumber
\end{align}
which coincides with Eq.(\ref{eq:udYYZ}). 
We also obtain Eq.(\ref{eq:udYZZ}) from the second equation of Eq.(\ref{eq:qPIIyzm}).
It follows from the construction of the p-ud equation that if ($y_m $, $z_m $) is a solution of Eq.(\ref{eq:qPIIyzm}) and $(\eta _m , Y_m)$ and $(\zeta _m , Z_m)$ are the p-ud analogue of $y_m $ and $z_m $, then $(\eta _m , Y_m)$ and $(\zeta _m , Z_m)$ satisfy Eqs.(\ref{eq:udYYZ}) and (\ref{eq:udYZZ}).
Note that if $y(m)$ is expanded into series of $q=e^{Q/\varepsilon}$ as
\begin{align}
& y(m) = \hat{\eta } (m)  q^{\hat{Y}(m)}\sum_{k=0}^{\infty}d(k,m)q^k, \label{eq:expand} 
\end{align}
where $\hat{\eta } (m) \in \{ \pm 1 \} $ and $d(0,m)>0$, then the p-ud analogue of $y(m )$ is $(\hat{\eta } (m)  , \hat{Y}(m) Q)$.

We investigate uniqueness and indefiniteness of Eqs.(\ref{eq:udYYZ}) and (\ref{eq:udYZZ}), and rewrite the equations into a simpler form.
\begin{prop} \label{prop:uniindef}
(i) If $(\zeta _{m} ,Z_m )\neq (+1 , mQ)$, then Eq.(\ref{eq:udYYZ}) is equivalent to
\begin{equation}
Y_{m+1} + Y_m = A+ 2m Q- \max [ mQ-Z_m, 0 ] 
\label{eq:evoluY}
\end{equation}
and 
\begin{equation}
\eta_{m+1}\eta_{m} =\left\{ 
\begin{array}{cc}
\zeta _{m} & (Z_{m} <mQ ),\\
-1 & (Z_{m} \geq mQ ) .
\end{array}
\right.
\end{equation}
(ii) If $(\zeta _{m} ,Z_m ) = (+1 , mQ)$, then Eq.(\ref{eq:udYYZ}) is equivalent to $Y_m+Y_{m+1} \geq A+2mQ$. \\
(iii) If $(\eta _{m+1},Y_{m+1} ) \neq (+1,0)$, then Eq.(\ref{eq:udYZZ}) is equivalent to
\begin{equation}
Z_{m}+Z_{m+1}=\max [Y_{m+1},0]
\end{equation}
and
\begin{equation}
\zeta_{m+1}\zeta_{m} =\left\{ 
\begin{array}{cc}
\eta _{m+1} & (Y_{m+1} >0 ),\\
-1 & ( Y_{m+1} \leq 0) .
\end{array}
\right.
\end{equation}
(iv) If $(\eta _{m+1},Y_{m+1} ) = (+1,0)$, then Eq.(\ref{eq:udYZZ}) is equivalent to $Z_m+Z_{m+1} \leq 0$.
\end{prop}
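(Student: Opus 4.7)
The plan is to prove all four statements by exhaustive case analysis on the parity signs inside the max-expressions, exploiting the fact that $S(\omega) \in \{0, -\infty\}$: once the signs are fixed, each term is either ``active'' (contributing its amplitude) or silently drops out as $-\infty$. For Eq.(\ref{eq:udYYZ}) the governing signs are $\zeta_m$ and $\eta_{m+1}\eta_m$; for Eq.(\ref{eq:udYZZ}) they are $\eta_{m+1}$ and $\zeta_{m+1}\zeta_m$. In each case the equation splits into four subcases, and the proposition is then read off by collating which subcases are consistent and what amplitude identity each forces.

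For parts (i) and (ii), I would enumerate the four choices of $(\zeta_m, \eta_{m+1}\eta_m) \in \{\pm 1\}^2$. The case $(-1,+1)$ leaves the RHS equal to $-\infty$ while the LHS still contains the finite term $A+2mQ$, so it is ruled out. The case $(+1,+1)$ reduces to $mQ - Z_m + Y_{m+1}+Y_m = \max[Y_{m+1}+Y_m,\, A+2mQ]$, which is consistent only for $Z_m \leq mQ$ and yields $Y_{m+1}+Y_m = A+mQ+Z_m$ when $Z_m < mQ$. The case $(+1,-1)$ reduces to $Y_{m+1}+Y_m = \max[mQ-Z_m+Y_{m+1}+Y_m,\, A+2mQ]$, which is consistent only for $Z_m \geq mQ$ and yields $Y_{m+1}+Y_m = A+2mQ$ when $Z_m > mQ$. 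The case $(-1,-1)$ gives $A+2mQ = \max[mQ-Z_m+Y_{m+1}+Y_m,\, Y_{m+1}+Y_m]$, allowable for any $Z_m$ and producing the value matching the unified formula. Collating consistent subcases away from $Z_m = mQ$ gives exactly $Y_{m+1}+Y_m = A+2mQ - \max[mQ-Z_m, 0]$ together with the stated parity rule, proving (i). At the excluded value $(\zeta_m, Z_m) = (+1, mQ)$, the ``$mQ-Z_m+\cdots$'' and ``$Y_{m+1}+Y_m+\cdots$'' terms collide numerically; direct substitution collapses the LHS to $Y_{m+1}+Y_m$ and the RHS to $\max[Y_{m+1}+Y_m, A+2mQ]$, giving the inequality $Y_{m+1}+Y_m \geq A+2mQ$ asserted in (ii).

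The same template handles (iii) and (iv) via the four sign choices of $(\eta_{m+1}, \zeta_{m+1}\zeta_m)$ in Eq.(\ref{eq:udYZZ}). The case $(-1,+1)$ sends the LHS to $-\infty$ while the RHS contains the constant $0$, so it is excluded. The remaining three cases each yield a consistency relation between $Y_{m+1}$ and $Z_m+Z_{m+1}$, from which one reads off $Z_m+Z_{m+1} = \max[Y_{m+1}, 0]$ together with $\zeta_{m+1}\zeta_m = \eta_{m+1}$ when $Y_{m+1} > 0$ and $\zeta_{m+1}\zeta_m = -1$ when $Y_{m+1} < 0$, provided we are not at the collision point $(\eta_{m+1}, Y_{m+1}) = (+1, 0)$. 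At that boundary value the ``$Y_{m+1}$'' term and the constant ``$0$'' coincide, and direct substitution reduces the equation to $\max[0, Z_m+Z_{m+1}] = \max[Z_m+Z_{m+1}, 0]$, equivalent to $Z_m+Z_{m+1} \leq 0$, which is (iv).

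The main obstacle is bookkeeping rather than conceptual. Eq.(\ref{eq:udYYZ}) contains six amplitude-bearing terms whose active/inactive status is twisted together through the three parity products $\eta_{m+1}\eta_m$, $\zeta_m \eta_{m+1}\eta_m$, and $\zeta_m$, so it is easy to mis-identify which terms survive in a given subcase. The delicate part is the boundary values $Z_m = mQ$ and $Y_{m+1} = 0$: precisely there, two max-arguments coincide and the piecewise rule in (\ref{eq:evolp}) glues, so the equation degenerates from a determined identity into an inequality. This degeneracy is what forces the separation into the unique-evolution statements (i),(iii) and the indefinite-evolution statements (ii),(iv), and it must be verified by a dedicated direct substitution in each case.
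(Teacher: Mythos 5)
Your proposal is correct and follows essentially the same route as the paper's proof: an exhaustive case analysis on the parity signs, using $S(\omega)\in\{0,-\infty\}$ to decide which terms survive each maximum, with the boundary cases $Z_m=mQ$ and $Y_{m+1}=0$ degenerating into the inequalities of (ii) and (iv); the paper merely organizes the split as $\zeta_m=\pm1$ first and then the sign of $mQ-Z_m$, rather than all four sign pairs at once. One small slip: in your treatment of (iv) the displayed identity $\max[0,Z_m+Z_{m+1}]=\max[Z_m+Z_{m+1},0]$ is a tautology as written; the two subcases $\zeta_{m+1}\zeta_m=\pm 1$ actually give $0=\max[Z_m+Z_{m+1},0]$ and $\max[0,Z_m+Z_{m+1}]=0$ respectively, each of which correctly yields $Z_m+Z_{m+1}\leq 0$.
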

\begin{proof}
We show (i) and (ii).
If $\zeta _{m}=-1$, then it follows from Eq.(\ref{eq:udYYZ}) that
\begin{align}
\max[ & mQ-Z_m +Y_{m+1} +Y_m + S(\eta_{m+1}\eta_{m}) , \\
& Y_{m+1} +Y_m  + S(\eta_{m+1}\eta_{m}) , A +2m Q ]  \nonumber \\
 = \max[& mQ-Z_m +Y_{m+1} +Y_m + S(-\eta_{m+1}\eta_{m}) , Y_{m+1} +Y_m  + S(-\eta_{m+1}\eta_{m}) ]. \nonumber 
\end{align}
Therefore we have $\eta_{m+1}\eta_{m} =-1$ and 
\begin{align}
 \max[ mQ-Z_m +Y_{m+1} +Y_m , Y_{m+1} +Y_m   ] = A+2m Q ,
\end{align}
which is equivalent to Eq.(\ref{eq:evoluY}).

If $\zeta _{m}=+1$, then it follows from Eq.(\ref{eq:udYYZ}) that
\begin{align}
\max[ & mQ-Z_m +Y_{m+1} +Y_m + S(\eta_{m+1}\eta_{m}) , Y_{m+1} +Y_m + S(-\eta_{m+1}\eta_{m}) ]  \nonumber \\
 = \max[ & mQ-Z_m +Y_{m+1} +Y_m + S(-\eta_{m+1}\eta_{m}) ,\\
&  Y_{m+1} +Y_m + S(\eta_{m+1}\eta_{m}) ,A+2m Q ]. \nonumber 
\end{align}
If $mQ-Z_m >0$ (resp. $mQ-Z_m <0 $), then we have $\eta_{m+1}\eta_{m} =+1$ (resp. $\eta_{m+1}\eta_{m} =-1$) and $mQ-Z_m +Y_{m+1} +Y_m  = A +2m Q $ (resp. $Y_{m+1} +Y_m = A+2m Q $).
If $mQ-Z_m =0$, then the equation is written as $Y_{m+1} +Y_m = \max[ Y_{m+1} +Y_m , A +2m Q ] $, which is equivalent to $Y_{m+1} +Y_m \geq A+2m Q $.
Therefore we have (i) and (ii).

(iii) and (iv) are shown similarly.
\end{proof}
Therefore, if $(\zeta _m , Z_m) \neq (+1,mQ)$ and $(\eta _{m+1} ,Y_{m+1}) \neq (+1,0)$, then the amplitude function satisfies Eqs.(\ref{eq:evolu2}) and (\ref{eq:evolu1}).

Assume that the values $(\eta _{m_0}, Y_{m_0})$ and $(\zeta _{m_0}, Z_{m_0} )$ are given.
On the forward evolution, $(\eta _{m_0+1}, Y_{m_0 +1})$ may be determined by Eq.(\ref{eq:udYYZ}) and $(\zeta _{m_0+1}, Z_{m_0 +1})$ may be determined by Eq.(\ref{eq:udYZZ}).
On the backward evolution, $(\zeta _{m_0-1}, Z_{m_0 -1})$ may be determined by Eq.(\ref{eq:udYZZ}) and $(\eta _{m_0-1}, Y_{m_0 -1})$ may be determined by Eq.(\ref{eq:udYYZ}).
Uniqueness and indefiniteness for the time evolution readily follow from Proposition \ref{prop:uniindef}. 
\begin{cor} \label{cor:uniindef}
(i) If $(\eta _{m},Y_{m} ) \neq (+1,0)$ (resp. $(\eta _{m},Y_{m} ) = (+1,0)$), then the forward evolution to determine $(\zeta _{m},Z_{m} )$ is unique (resp. indefinite).\\
(ii) If $(\zeta _{m} ,Z_m )\neq (+1 , mQ)$ (resp. $(\zeta _{m} ,Z_m ) = (+1 , mQ)$), then the forward evolution to determine $(\eta _{m+1},Y_{m+1} )$ is unique (resp. indefinite).\\
(iii) If $(\zeta _{m} ,Z_m )\neq (+1 , mQ)$ (resp. $(\zeta _{m} ,Z_m ) = (+1 , mQ)$), then the backward evolution to determine $(\eta _{m},Y_{m} )$ is unique (resp. indefinite).\\
(iv) If $(\eta _{m},Y_{m} ) \neq (+1,0)$ (resp. $(\eta _{m},Y_{m} ) = (+1,0)$), then the backward evolution to determine $(\zeta _{m-1},Z_{m-1} )$ is unique (resp. indefinite).
\end{cor}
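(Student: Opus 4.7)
The plan is to read off all four parts directly from Proposition \ref{prop:uniindef} by matching each evolution step to one of Eqs.(\ref{eq:udYYZ}), (\ref{eq:udYZZ}) at an appropriate index. Observe that Eq.(\ref{eq:udYYZ}) at index $m$ is a relation among the three pairs $(\eta_m, Y_m)$, $(\zeta_m, Z_m)$, $(\eta_{m+1}, Y_{m+1})$, while Eq.(\ref{eq:udYZZ}) at index $m$ involves $(\zeta_m, Z_m)$, $(\eta_{m+1}, Y_{m+1})$, $(\zeta_{m+1}, Z_{m+1})$. Each of (i)--(iv) amounts to solving one of these equations for a single pair, given the other two.

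For (ii), forward determination of $(\eta_{m+1}, Y_{m+1})$ from $(\eta_m, Y_m)$ and $(\zeta_m, Z_m)$ uses Eq.(\ref{eq:udYYZ}) at index $m$. When $(\zeta_m, Z_m) \neq (+1, mQ)$, Proposition \ref{prop:uniindef}(i) replaces the equation by Eq.(\ref{eq:evoluY}) together with an explicit formula for $\eta_{m+1}\eta_m$; since $(\eta_m, Y_m)$ is given, this determines $(\eta_{m+1}, Y_{m+1})$ uniquely. When $(\zeta_m, Z_m) = (+1, mQ)$, Proposition \ref{prop:uniindef}(ii) replaces it by the single inequality $Y_m + Y_{m+1} \geq A + 2mQ$, in which $\eta_{m+1}$ does not appear, so both coordinates of $(\eta_{m+1}, Y_{m+1})$ are free. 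Claim (i) is identical in structure, applied to Eq.(\ref{eq:udYZZ}) at index $m-1$ together with Proposition \ref{prop:uniindef}(iii)--(iv); here the substituted index $m-1$ turns the original ``$(\eta_{m+1}, Y_{m+1}) \neq (+1, 0)$'' into ``$(\eta_m, Y_m) \neq (+1, 0)$''.

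The backward statements (iii)--(iv) use the same two equations at the same indices, but now solve for the earlier pair. For (iii), the unique case of Proposition \ref{prop:uniindef}(i) gives a fixed value of $Y_m + Y_{m+1}$ and a prescribed sign $\eta_m \eta_{m+1}$, so knowing $(\eta_{m+1}, Y_{m+1})$ and $(\zeta_m, Z_m)$ one recovers $(\eta_m, Y_m)$ uniquely; in the indefinite case the same inequality constrains $Y_m$ while leaving $\eta_m$ free. Part (iv) is the analogous statement for Eq.(\ref{eq:udYZZ}) at index $m-1$ via Proposition \ref{prop:uniindef}(iii)--(iv). The only step that requires genuine attention, and the main point of the proof, is to verify in each indefinite case that the free pair is indefinite in \emph{both} coordinates rather than in the amplitude alone; this is immediate from inspecting the reduced equations in Proposition \ref{prop:uniindef}(ii), (iv), where the relevant parity variable does not appear. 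Aside from this, the argument is pure index bookkeeping.
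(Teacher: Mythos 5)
Your proposal is correct and follows the same route as the paper, which simply states that the corollary ``readily follows'' from Proposition \ref{prop:uniindef}; your index bookkeeping (Eq.(\ref{eq:udYZZ}) at index $m-1$ for parts (i) and (iv), Eq.(\ref{eq:udYYZ}) at index $m$ for parts (ii) and (iii)) is exactly the intended matching. The only quibble is that in the indefinite cases the amplitude is not literally ``free'' but constrained to a half-line by the reduced inequality; since indefiniteness only requires non-uniqueness, this does not affect the argument.
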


We now compare the single p-ud PII (Eq.(\ref{eq:udPIIsingle})) with the simultaneous p-ud PII (Eqs.(\ref{eq:udYYZ}) and (\ref{eq:udYZZ})).
We investigate uniqueness and indefiniteness of the forward evolution of the single p-ud PII.
\begin{prop} \label{prop:singleudPIIevol}
Set
\begin{align}
& \tilde{Z} = A + 2mQ - \max[Z_{m -1} + Z_m , 0] - \max[mQ - Z_m , 0], \label{eq:XYX'} \\
& Z' = -Z_m + \max[\tilde{Z} , 0]. \nonumber
\end{align}
Then the forward evolution of Eq.(\ref{eq:udPIIsingle}) is described as follows:\\
(i) Assume that $(\zeta_{m -1}, \zeta_ m) = (+1, +1)$.
If $mQ - Z_m > 0$ and $\tilde{Z} \neq 0$, then $(\zeta_{m +1}, Z_{m +1}) = (\zeta_m \mbox{sgn} \tilde{Z}, Z') $.
If $mQ - Z_m < 0$, then $(\zeta_{m +1}, Z_{m +1}) = (-\zeta_m , Z') $.
If $mQ-Z_m=0$ or ($mQ - Z_m > 0$ and $\tilde{Z} = 0$), then the evolution is indefinite. \\
(ii) Assume that $(\zeta_{m -1}, \zeta_ m) = (+1, -1)$.
If $Z_{m -1} + Z_m > 0$ and $\tilde{Z} \neq 0$, then $(\zeta_{m +1}, Z_{m +1}) = (\zeta_m \mbox{sgn} \tilde{Z}, Z') $.
If $Z_{m -1} + Z_m < 0$, then $(\zeta_{m +1}, Z_{m +1}) = (-\zeta_m , Z') $.
If $Z_{m -1} + Z_m=0$ or ($Z_{m -1} + Z_m > 0$ and $\tilde{Z} = 0$), then the evolution is indefinite.\\
(iii) Assume that $(\zeta_{m -1}, \zeta_ m) = (-1, +1)$.
If $(mQ - Z_m )(Z_{m -1} + Z_m) < 0$ and $\tilde{Z} \neq 0$, then $(\zeta_{m +1}, Z_{m +1}) = (\zeta_m \mbox{sgn} \tilde{Z}, Z') $.
If $(mQ - Z_m )(Z_{m -1} + Z_m) > 0$, then $(\zeta_{m +1}, Z_{m +1}) = (-\zeta_m , Z') $.
If $(mQ - Z_m )(Z_{m -1} + Z_m)=0$ or ($(mQ - Z_m )(Z_{m -1} + Z_m) < 0$ and $\tilde{Z} = 0$), then the evolution is indefinite.\\
(iv) Assume that $(\zeta_{m -1}, \zeta_ m) = (-1, -1)$.
Then we have $(\zeta_{m +1}, Z_{m +1}) = (-\zeta_m , Z') $.
\end{prop}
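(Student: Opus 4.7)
The plan is a direct case analysis on the four sign configurations of $(\zeta_{m-1}, \zeta_m)$ substituted into the single p-ud PII equation \eqref{eq:udPIIsingle}. For each configuration I would first plug in the chosen parities on both sides; since $S(-1) = -\infty$, a majority of the seventeen max-arguments immediately drop out, leaving a short list of surviving terms. I would then split these into arguments depending on $(\zeta_{m+1}, Z_{m+1})$ and those independent of it. The $Z_{m+1}$-dependent arguments come in pairs differing only by $S(\pm\zeta_{m+1})$ and split between the two sides, so once a candidate sign of $\zeta_{m+1}$ is fixed only one member of each pair survives, and the corresponding half of the terms on the opposite side collapses to $-\infty$.

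Next, using the abbreviation $P = \max[Z_{m-1}+Z_m, 0]$, I would collect the surviving $Z_{m+1}$-dependent terms into a single expression $Z_{m+1} + (\text{const}) + \max[\ldots]$ and similarly collapse the $Z_{m+1}$-free terms. A short computation shows that the relevant $\max$-parts of these two blocks differ on the two sides of \eqref{eq:udPIIsingle} by exactly the combinations $P$ and $\max[mQ-Z_m,0]$ that appear in the definition of $\tilde Z = A + 2mQ - P - \max[mQ-Z_m, 0]$. The equation thus reduces to a two-term balance from which one reads off $Z_{m+1} + Z_m = \max[\tilde Z, 0]$, i.e.\ $Z_{m+1} = Z'$. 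Comparing which candidate sign of $\zeta_{m+1}$ makes the two sides simultaneously finite and equal fixes $\zeta_{m+1}$, yielding $\zeta_{m+1} = \zeta_m\,\mbox{sgn}\,\tilde Z$ in one regime and $\zeta_{m+1} = -\zeta_m$ in the other.

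Case (iv) is the cleanest, because when $\zeta_{m-1}=\zeta_m=-1$ every $Z_{m+1}$-free argument on the left-hand side carries a factor $S(-1)$ and collapses to $-\infty$, so the only surviving left-hand arguments all contain $Z_{m+1}$; this produces a symmetric reduction forcing $(\zeta_{m+1}, Z_{m+1}) = (-\zeta_m, Z')$ unconditionally. Cases (i)--(iii) require further subcases according to the signs of $mQ-Z_m$ and $Z_{m-1}+Z_m$, since these signs determine which $Z_{m+1}$-free argument dominates inside each max, and this selection in turn fixes both the value of $\tilde Z$ and the winning parity.

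The main obstacle is the sheer bookkeeping: with many competing max-arguments and eight parity factors, one must be systematic about which terms survive in each configuration. Once the surviving terms are isolated, the arithmetic on each branch is routine. Indefiniteness arises exactly at the boundary configurations $mQ-Z_m=0$, $Z_{m-1}+Z_m=0$, or $\tilde Z=0$, where two competing arguments inside a max tie and neither $Z_{m+1}$ nor $\zeta_{m+1}$ is forced; these are precisely the exceptional conditions listed in items (i)--(iii).
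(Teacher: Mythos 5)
Your plan is correct and follows essentially the same route as the paper's proof: substitute each parity configuration into Eq.~(\ref{eq:udPIIsingle}), discard the arguments killed by $S(-1)=-\infty$, factor out $\max[Z_{m-1}+Z_m,0]$, subdivide on the sign of $mQ-Z_m$ (resp.\ $Z_{m-1}+Z_m$), and read off $\zeta_{m+1}$ and $Z_{m+1}=Z'$ from the comparison of $A+2mQ$ with the remaining $Z_{m+1}$-free block, with ties giving indefiniteness. The paper writes out only case (i) in detail and declares (ii)--(iv) analogous, so your sketch of the surviving-term bookkeeping in the other cases (including the clean collapse in case (iv)) is consistent with, and no less complete than, the published argument.
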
 \label{prop:Zsing1}
\begin{proof}
We show (i).
Assume that $(\zeta_{m -1}, \zeta_ m) = (+1, +1)$. If $mQ-Z_m>0$, then Eq.(\ref{eq:udPIIsingle}) is written as
\begin{align}
\max [& Z_{m +1} + Z_m +\max[ Z_{m -1} + Z_m,0 ] + S(\zeta_{m +1}), \\
& Z_{m +1} + mQ + \max [ Z_{m -1} + Z_m  , 0 ] + S(-\zeta_{m +1}), A + 2mQ ] \nonumber \\
= \max [& Z_{m +1} + Z_m +\max[ Z_{m -1} + Z_m,0 ] + S(-\zeta_{m +1}),\nonumber \\
& Z_{m +1} + mQ + \max [ Z_{m -1} + Z_m  , 0 ] + S(\zeta_{m +1}), \nonumber \\
&  \max[ Z_{m -1} + Z_m ,0] + mQ- Z_m] . \nonumber
\end{align}
If $ A + 2mQ >  \max[ Z_{m -1} + Z_m ,0] + mQ- Z_m$ (i.e. $\tilde{Z} > 0$), then we have $\zeta_{m +1} =+1 $ and $Z_{m +1} = A + mQ - \max [ Z_{m -1} + Z_m  , 0 ] $, which is equivalent to $Z_{m +1} = Z'$.
If $ A + 2mQ < \max[ Z_{m -1} + Z_m ,0] + mQ- Z_m$ (i.e. $\tilde{Z} < 0$), then we have $\zeta_{m +1} =-1 $ and $Z_{m +1} = - Z_m$, which is also equivalent to $Z_{m +1} = Z'$.
If $ A + 2mQ = \max[ Z_{m -1} + Z_m ,0] + mQ- Z_m$ (i.e. $\tilde{Z} = 0$), then we have $Z_{m +1} \leq A + mQ - \max [ Z_{m -1} + Z_m  , 0 ] $, which implies that the evolution is indefinite.

If $mQ-Z_m<0$, then Eq.(\ref{eq:udPIIsingle}) is written as
\begin{align}
 \max [ & \max[ Z_{m -1} + Z_m,0 ] + \max[ S(\zeta_{m +1}), mQ -Z_m  + S(-\zeta_{m +1}) ]\\
& + Z_{m +1} + Z_m , Z_{m -1} + Z_m, 0, A + 2mQ ] \nonumber \\
= \max[ & Z_{m -1} + Z_m,0 ]  + \max [ S(-\zeta_{m +1}) ,mQ -Z_m + S(\zeta_{m +1})] + Z_{m +1} + Z_m . \nonumber 
\end{align}
Hence we have $\zeta_{m +1} =-1$ and $Z_{m +1} = - Z_m - \max[ Z_{m -1} + Z_m,0 ] + \max[Z_{m -1} + Z_m, 0, A + 2mQ  ]= -Z_m + \max[\tilde{Z} , 0]$.

If $mQ-Z_m =0$, then Eq.(\ref{eq:udPIIsingle}) is written as
\begin{align}
\max [& Z_{m +1} + mQ + \max [ Z_{m -1} + mQ ,0],  Z_{m -1} + mQ, 0, A + 2mQ ] \nonumber \\
= \max [& Z_{m +1} + mQ + \max [ Z_{m -1} + mQ ,0 ], Z_{m -1} + mQ, 0],
\end{align}
and it holds if the value $Z_{m+1} $ is large enough.
Thus the evolution is indefinite in this case.

Summarizing the above, we obtain (i).
(ii), (iii) and (iv) are shown similarly.
\end{proof}
By arranging the previous proposition, we have
\begin{prop} \label{prop:Zsing3}
If $(\zeta _{m -1} \zeta _{m}, Z_{m -1} +Z_{m} ) = (-1, 0)$, $(\zeta _{m}, Z_{m} ) = (+1, mQ)$, ($(\zeta _{m}, Z_{m} ) = (+1, -A-mQ)$, $Z_{m -1} + Z_m < 0$ and $mQ - Z_m > 0$), ($(\zeta _{m-1}, Z_{m-1} ) = (+1, A+mQ)$, $Z_{m -1} + Z_m > 0$ and $mQ - Z_m > 0$) or ($(\zeta _{m-1} \zeta _{m}, Z_{m-1} +Z_{m} ) = (-1, A+2mQ)$, $Z_{m -1} + Z_m > 0$ and $mQ - Z_m < 0$), then the forward evolution by Eq.(\ref{eq:udPIIsingle}) is indefinite.
Otherwise, the forward evolution is unique, and the amplitude function $Z_{m+1} $ is written as
\begin{equation}
Z_{m +1} = -Z_m +\max [0, A+2mQ -\max[0, mQ-Z_m ] -\max[0,Z_{m-1} +Z_{m}]] .
\label{eq:Zm+1ue}
\end{equation}
The sign function $\zeta _{m+1}$ is written as $\zeta _{m+1} = -\zeta _m $ or $\zeta _{m+1} =\zeta _m \mbox{sgn} \tilde{Z}$, 
whose conditions are described in Proposition \ref{prop:singleudPIIevol}.
\end{prop}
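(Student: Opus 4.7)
The plan is to obtain Proposition~\ref{prop:Zsing3} as a direct reorganisation of Proposition~\ref{prop:singleudPIIevol}. I would walk through the four sign combinations of $(\zeta_{m-1},\zeta_m)$ separately, record in each case the precise condition that makes the evolution indefinite, translate each such condition into the signed-pair language used in the statement, and check that the union over cases (i)--(iv) of the ``indefinite'' conditions is exactly the five-item list displayed in Proposition~\ref{prop:Zsing3}.

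Concretely, case~(i) contributes $(\zeta_m,Z_m)=(+1,mQ)$ via the sub-case $mQ-Z_m=0$; then the sub-case $mQ-Z_m>0$ with $\tilde Z=0$ yields, upon expanding $\tilde Z=A+2mQ-\max[Z_{m-1}+Z_m,0]-(mQ-Z_m)$, either $(\zeta_{m-1},Z_{m-1})=(+1,A+mQ)$ when $Z_{m-1}+Z_m>0$ or $(\zeta_m,Z_m)=(+1,-A-mQ)$ when $Z_{m-1}+Z_m<0$. Case~(ii) contributes $(\zeta_{m-1}\zeta_m,Z_{m-1}+Z_m)=(-1,0)$ via $Z_{m-1}+Z_m=0$; and from $Z_{m-1}+Z_m>0$ with $\tilde Z=0$ it contributes $(\zeta_{m-1},Z_{m-1})=(+1,A+mQ)$ when $mQ-Z_m>0$ and $(\zeta_{m-1}\zeta_m,Z_{m-1}+Z_m)=(-1,A+2mQ)$ when $mQ-Z_m<0$. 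Case~(iii) covers the remaining branches through its indefiniteness requirement $(mQ-Z_m)(Z_{m-1}+Z_m)\le 0$ combined with $\tilde Z=0$, and case~(iv) never produces indefiniteness, matching the statement.

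For the unique branches, Proposition~\ref{prop:singleudPIIevol} already gives $(\zeta_{m+1},Z_{m+1})=(-\zeta_m,Z')$ or $(\zeta_m\,\mbox{sgn}\,\tilde Z,Z')$, so $Z_{m+1}=Z'=-Z_m+\max[\tilde Z,0]$ is by definition of $\tilde Z$ exactly Eq.(\ref{eq:Zm+1ue}), and the description of $\zeta_{m+1}$ is inherited verbatim.

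The main obstacle is pure bookkeeping: one must verify that the five listed conditions cover every indefinite sub-case in (i)--(iii) without introducing spurious extras, and keep the translation between the equation $\tilde Z=0$ and the conditions on $(\zeta_{m-1},Z_{m-1})$ or $(\zeta_m,Z_m)$ straight in every sign sector. The boundary $Z_{m-1}+Z_m=0$ under $\zeta_{m-1}\zeta_m=+1$ is the only slightly delicate point, and it does not create a new indefinite case so long as $\tilde Z\ne 0$.
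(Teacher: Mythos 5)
Your proposal is correct and takes essentially the same route as the paper, whose entire justification of Proposition \ref{prop:Zsing3} is the single phrase ``by arranging the previous proposition'' --- i.e.\ precisely the case-by-case translation of the indefiniteness conditions of Proposition \ref{prop:singleudPIIevol} into the five listed signed-pair conditions that you carry out. Your bookkeeping (including the remark about the boundary $Z_{m-1}+Z_m=0$ with $\zeta_{m-1}\zeta_m=+1$) is in fact more explicit than the paper's own argument.
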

Note that $q$-PII is written as 
\begin{align}
 \zeta_{m + 1}  e^{Z_{m+1} /\varepsilon }= & \zeta_{m}  e^{-Z_{m} /\varepsilon } \cdot \label{eq:qPIIsignamp} \\
& \Big\{ \frac{e^{(A+2 mQ )/\varepsilon} }{(\zeta_{m - 1} \zeta_{m}e^{(Z_{m-1} +Z_m) /\varepsilon } +1) (\zeta_{m}e^{(-Z_m+mQ) /\varepsilon } -1)} -1 \Big\} , \nonumber 
\end{align}
and Eq(\ref{eq:Zm+1ue}) is interpretated by picking up dominant terms of the right hand side of Eq.(\ref{eq:qPIIsignamp}).

We have similar propositions for backward evolution.
Namely, Propositions \ref{prop:Zsing1} and \ref{prop:Zsing3} are true for the backward evolution by replacing $m-1$ (resp. $m+1$) by $m+1$ (resp. $m-1$).
Note that the master thesis of the first author \cite{Iga} describe the details of the case of backward evolution.

Under the assumption that a solution $(\zeta _m , Z_{m})$ of the single p-ud PII is given and there exists $m_0 \in \mathbb{Z}$ such that $Z_{m_0-1} +Z_{m_0} > 0 $, we construct the function $(\eta _m ,Y_{m})$ such that ($(\eta _m ,Y_{m})$, $(\zeta _m , Z_{m})$) is a solution of the simultaneous p-ud PII.
In order to satisfy $Z_{m_0-1} +Z_{m_0}=\max [Y_{m_0},0]$, we have $ Y_{m_0 }= Z_{m_0-1} +Z_{m_0} $ and it follows from Eq.(\ref{eq:evolp}) that $\eta _{m_0}= \zeta _{m_0 -1} \zeta _{m_0} $.
If $(\zeta _{m_0} ,Z_{m_0} )\neq (+1 , m_0 Q)$, then it follows from Eq.(\ref{eq:evolu2}) that $ Y_{m_0+1} =- (Z_{m_0-1} +Z_{m_0}) + A+ 2m_0 Q- \max [ m_0 Q-Z_{m_0}, 0 ] $, and if $(\eta _{m_0 +1} ,Y_{m_0 +1} )\neq (+1 , 0)$, then it follows from Eq.(\ref{eq:evolu1}) that 
\begin{equation}
Z_{m_0+1} =- Z_{m_0} + \max[ 0, A+ 2m_0 Q - (Z_{m_0-1} +Z_{m_0}) - \max [ m_0 Q-Z_{m_0}, 0 ]] .
\end{equation}
Hence the evolution coincides with Eq.(\ref{eq:Zm+1ue}) and we have $Z_{m_0} +Z_{m_0 +1} \geq 0 $.
We can also show that the sign $\zeta _{m_0 +1}$ coincides with the one in Proposition \ref{prop:singleudPIIevol}.
By repeating the argument, we obtain that if the evolution as the simultaneous equation is unique, then $(\zeta _{m_0+2}, Z_{m_0 +2})$ is written in the form of Proposition \ref{prop:singleudPIIevol} with $m=m_0+1$.
We also obtain that the function $(\zeta _{m+1} , Z_{m+1} )$ is also written in the form of Proposition \ref{prop:singleudPIIevol} as far as the forward evolution is unique, and it is also true for the backward evolution. 
Note that the condition $Z_{m_0-1} +Z_{m_0} \leq 0 $ causes the indefinite evolution for single p-ud PII. Namely we have 
\begin{prop} \label{prop:Zm0-1Zm0neg}
If $Z_{m_0-1} +Z_{m_0} \leq 0 $, then the indefinite forward evolution or the indefinite backward evolution for single p-ud PII occurs around $m=m_0$.
\end{prop}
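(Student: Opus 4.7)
The plan is to argue the contrapositive: assuming every forward and every backward evolution of the single p-ud PII is unique at every $m$ in a window around $m_0$, I would deduce $Z_{m_0-1}+Z_{m_0}>0$.

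The starting observation is immediate from Eq.(\ref{eq:Zm+1ue}) of Proposition~\ref{prop:Zsing3}: when the forward evolution at $m$ is unique,
\begin{equation*}
Z_m+Z_{m+1}=\max\bigl[0,\,A+2mQ-\max[0,mQ-Z_m]-\max[0,Z_{m-1}+Z_m]\bigr]\ge 0,
\end{equation*}
and the backward analog (noted in the paragraph after Proposition~\ref{prop:Zsing3}) yields $Z_{m-1}+Z_m\ge 0$ whenever the backward evolution at $m$ is unique. Applying the first inequality at $m=m_0-1$, or the second at $m=m_0$, disposes at once of the strict case $Z_{m_0-1}+Z_{m_0}<0$: at least one of these evolutions must fail to be unique, hence is indefinite.

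For the boundary case $Z_{m_0-1}+Z_{m_0}=0$, I would split on the product $\zeta_{m_0-1}\zeta_{m_0}$. When $\zeta_{m_0-1}\zeta_{m_0}=-1$, the indefinite trigger $(\zeta_{m-1}\zeta_m,Z_{m-1}+Z_m)=(-1,0)$ listed in Proposition~\ref{prop:Zsing3} is directly satisfied at $m=m_0$, so the forward evolution at $m_0$ is indefinite. When $\zeta_{m_0-1}\zeta_{m_0}=+1$, I would trace the forward evolution using cases (i) and (iv) of Proposition~\ref{prop:singleudPIIevol}: either a sub-condition ($m_0Q-Z_{m_0}=0$ or $\tilde{Z}=0$) triggers indefiniteness directly at $m_0$, or the parities flip within one or two steps into a mixed-parity configuration with $Z_m+Z_{m+1}=0$, again hitting the indefinite trigger in cases (ii) or (iii) at $m_0+1$ or $m_0+2$. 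The quantitative ingredient making the propagation terminate is that, once $Z_m+Z_{m+1}>0$, the next $\tilde{Z}|_{m+1}$ picks up a shift containing $2Q<0$, so the unique regime cannot persist.

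The main obstacle is precisely the equal-parity boundary subcase. The indefinite configuration need not sit at $m_0$, so one has to follow the max-plus arithmetic of Proposition~\ref{prop:singleudPIIevol} through a bounded number of steps and check that the drift $2Q<0$ in Eq.(\ref{eq:Zm+1ue}) forces either $Z_m+Z_{m+1}$ down to zero in a mixed-parity configuration or triggers one of the other indefinite sub-conditions within a small neighborhood of $m_0$.
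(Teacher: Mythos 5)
Your handling of the strict case $Z_{m_0-1}+Z_{m_0}<0$ is correct: by Eq.(\ref{eq:Zm+1ue}) and its backward analogue, unique evolution forces $Z_{m_0-1}+Z_{m_0}=\max[0,\cdot\,]\ge 0$, so either the backward evolution at $m_0$ or the forward evolution at $m_0-1$ must be indefinite. The boundary subcase $Z_{m_0-1}+Z_{m_0}=0$ with $\zeta_{m_0-1}\zeta_{m_0}=-1$ is also fine, being one of the explicit triggers in Proposition \ref{prop:Zsing3}. The gap is the remaining subcase $Z_{m_0-1}+Z_{m_0}=0$ with $\zeta_{m_0-1}\zeta_{m_0}=+1$. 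Your claim that the forward evolution either hits an indefinite sub-condition at $m_0$ or reaches a mixed-parity configuration with $Z_m+Z_{m+1}=0$ within one or two steps is false: take $(\zeta_{m_0-1},\zeta_{m_0})=(-1,-1)$, $Z_{m_0-1}=-Z_{m_0}$ and parameters with $\tilde{Z}=A+2m_0Q-\max[0,m_0Q-Z_{m_0}]>0$; then case (iv) of Proposition \ref{prop:singleudPIIevol} gives a \emph{unique} forward step with $Z_{m_0}+Z_{m_0+1}=\tilde{Z}>0$, and the subsequent forward steps are generically unique as well. Your fallback, that the drift $2Q<0$ eventually kills the unique regime, only produces indefiniteness at some sufficiently large $m$ --- that is essentially the content of Theorem \ref{thm:nonentire} --- and does not place it ``around $m=m_0$'', which is what the proposition asserts.

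What is missing is the one observation the paper's proof is built on: since $Z_{m_0-1}+Z_{m_0}\le 0$, the term $\max[0,Z_{m_0-1}+Z_{m_0}]$ in Eq.(\ref{eq:Zm+1ue}) equals $0$, so if the forward evolution at $m_0$ is unique then $Z_{m_0+1}=-Z_{m_0}+\max[0,A+2m_0Q-\max[0,m_0Q-Z_{m_0}]]$ does not depend on $Z_{m_0-1}$ at all; every value $Z_{m_0-1}\le -Z_{m_0}$ is compatible with the same pair $(Z_{m_0},Z_{m_0+1})$ in Eq.(\ref{eq:udPIIsingle}) at $m=m_0$, so the backward evolution at $m_0$ cannot be unique. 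This single argument covers the cases $<0$ and $=0$ (and both parity patterns) uniformly and localizes the indefiniteness exactly at $m=m_0$. In the counterexample above it is indeed the backward evolution that is indefinite (the backward $\tilde{Z}$ at $m_0$ vanishes while $Z_{m_0+1}+Z_{m_0}>0$), which is why no amount of forward tracing will find it.
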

\begin{proof}
Assume that $Z_{m_0+1}$ is determined uniquely.
It follows from $Z_{m_0-1} +Z_{m_0} \leq 0 $ that 
\begin{equation}
 Z_{m_0 +1} = -Z_{m_0} +\max [0, A+2m_0 Q -\max[0, m_0 Q-Z_{m_0} ] ] .
\end{equation}
The value $Z_{m_0 +1}$ is independent from the value $Z_{m_0 -1} (\leq -Z_{m_0})$.
On the backward evolution such that the values $Z_{m_0+1}$ and $Z_{m_0}$ are given, the value $Z_{m_0-1}$ is determined indefinitely.
\end{proof}
\begin{thm}
Assume that a solution of the simultaneous p-ud PII (Eqs.(\ref{eq:udYYZ}) and (\ref{eq:udYZZ})) is given and the function $(\zeta _{m} , Z_{m} )$ also satisfies the single p-ud PII (Eq.(\ref{eq:udPIIsingle})).
If the evolution by the simultaneous p-ud PII is indefinite at $m=m'$, then the evolution by the single p-ud PII is also indefinite around $m=m'$.
\end{thm}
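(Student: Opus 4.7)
The plan is to reduce indefiniteness of the simultaneous system at $m=m'$ to one of two configurations identified in Corollary \ref{cor:uniindef}, and then appeal to the already-proved characterizations of indefiniteness for the single equation (Propositions \ref{prop:Zsing3} and \ref{prop:Zm0-1Zm0neg}). By Corollary \ref{cor:uniindef}, the evolution of the simultaneous p-ud PII at $m=m'$ is indefinite precisely when one of the following holds: $(\zeta _{m'},Z_{m'})=(+1,m'Q)$ (this is what obstructs determining $(\eta _{m'+1},Y_{m'+1})$ forward and $(\eta _{m'},Y_{m'})$ backward) or $(\eta _{m'},Y_{m'})=(+1,0)$ (this obstructs determining $(\zeta _{m'},Z_{m'})$ forward and $(\zeta _{m'-1},Z_{m'-1})$ backward). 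I would split the argument according to which of these two obstructions occurs.

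In the first case, $(\zeta _{m'},Z_{m'})=(+1,m'Q)$ is one of the explicitly listed indefiniteness conditions of Proposition \ref{prop:Zsing3}, so the forward evolution of the single p-ud PII at $m=m'$ is immediately indefinite. This half is essentially a lookup.

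In the second case, $(\eta _{m'},Y_{m'})=(+1,0)$, I would apply Proposition \ref{prop:uniindef}(iv), reading Eq.(\ref{eq:udYZZ}) with index $m'-1$: since $(\eta _{m'},Y_{m'})=(+1,0)$, the simultaneous equation forces $Z_{m'-1}+Z_{m'}\le 0$. Now Proposition \ref{prop:Zm0-1Zm0neg} applied with $m_0=m'$ produces indefinite forward or backward evolution of the single p-ud PII around $m=m'$, which is exactly the conclusion needed.

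The main obstacle is bookkeeping rather than any genuinely new computation: one must be careful that the index conventions of Corollary \ref{cor:uniindef} (where the condition $(\eta _{m},Y_{m})=(+1,0)$ refers to the forward update at step $m$ but to the backward update at step $m-1$) match up with the index conventions in Proposition \ref{prop:Zm0-1Zm0neg} (where the hypothesis involves $Z_{m_0-1}+Z_{m_0}$). One should also state precisely what "around $m=m'$" means so that both the simultaneous and single formulations refer to the same neighborhood; taking it to mean "at $m=m'$ or $m=m'-1$" is natural and covers all subcases uniformly. Once these conventions are fixed, the proof is a short case analysis with no further computation.
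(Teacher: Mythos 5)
Your proposal is correct and follows essentially the same route as the paper: split according to whether the obstruction is $(\zeta,Z)=(+1,mQ)$ (handled by Proposition \ref{prop:Zsing3}) or $(\eta,Y)=(+1,0)$ (which forces $Z_{m'-1}+Z_{m'}\leq 0$ and then invokes Proposition \ref{prop:Zm0-1Zm0neg}). The only cosmetic difference is that you cite Proposition \ref{prop:uniindef}(iv) for the inequality $Z_{m'-1}+Z_{m'}\leq 0$ while the paper rederives it inline from Eq.(\ref{eq:udYZZ}), and the paper hedges the index bookkeeping by allowing the obstruction to sit at $m''=m'-1,m'$ or $m'+1$, which matches the care you flag.
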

\begin{proof}
If the evolution by the simultaneous equation is indefinite at $m=m'$, then it follows from Corollary \ref{cor:uniindef} that $(\zeta _{m''} , Z_{m''} ) = ( +1, m''Q)$ or $(\eta _{m''} , Y_{m''} ) = ( +1, 0) $ for $m'' =m'-1, m'$ or $m'+1$.
If $(\zeta _{m''} , Z_{m''} ) = ( +1, m''Q)$, then it follows from Proposition \ref{prop:Zsing3} that the evolution by Eq.(\ref{eq:udPIIsingle}) is also indefinite.
If $(\eta _{m''} , Y_{m''} ) = ( +1, 0) $, then it follows from Eq.(\ref{eq:udYZZ}) that
\begin{align}
\max[ 0 , Z_{m''}+Z_{m''-1}+S( -\zeta_{m''-1}\zeta_{m''}) ]  = \max[ 0 ,Z_{m''}+Z_{m''-1}+S(\zeta_{m''-1}\zeta_{m''}) ] ,
\end{align}
which implies $Z_{m''}+Z_{m''-1} \leq 0 $.
By Proposition \ref{prop:Zm0-1Zm0neg}, we have the theorem.
\end{proof}
On solutions of the single p-ud PII, we have 
\begin{thm} \label{thm:nonentire}
There exists no solution to the single p-ud PII (Eq.(\ref{eq:udPIIsingle})) such that any forward and backward evolution for all $m \in \mathbb{Z}$ are unique. 
\end{thm}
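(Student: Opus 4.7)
The plan is to argue by contradiction: assume $(\zeta_m, Z_m)$ is a solution of the single p-ud PII (Eq.(\ref{eq:udPIIsingle})) for which both forward and backward evolutions are unique at every $m \in \mathbb{Z}$. The key observation is that this hypothesis, together with Propositions \ref{prop:Zm0-1Zm0neg} and \ref{prop:Zsing3}, forces a positive lower bound on $A + 2mQ$ that must hold uniformly in $m$, which is impossible when $Q < 0$.

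The first step is to apply Proposition \ref{prop:Zm0-1Zm0neg} at every integer. Inspecting its proof, one sees that whenever $Z_{m_0-1} + Z_{m_0} \le 0$ and the forward evolution at $m_0$ is unique, the value $Z_{m_0+1}$ given by Eq.(\ref{eq:Zm+1ue}) does not depend on $Z_{m_0-1}$, so backward evolution from $(Z_{m_0}, Z_{m_0+1})$ cannot recover $Z_{m_0-1}$ uniquely. Under the total-uniqueness hypothesis this forces the strict inequality $Z_{m-1} + Z_m > 0$ for every $m \in \mathbb{Z}$.

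The second step is to apply Proposition \ref{prop:Zsing3} at every $m$: uniqueness of the forward evolution yields Eq.(\ref{eq:Zm+1ue}), and since $\max[0, Z_{m-1}+Z_m] = Z_{m-1}+Z_m$, this simplifies to
\[
 Z_{m+1} + Z_m = \max\bigl[0,\, A + 2mQ - \max[0, mQ - Z_m] - (Z_{m-1}+Z_m)\bigr].
\]
Applying step one at index $m+1$ gives $Z_{m+1} + Z_m > 0$, so the argument of the outer maximum must itself be strictly positive, and we obtain
\[
 0 \;<\; Z_{m-1} + Z_m \;<\; A + 2mQ - \max[0, mQ - Z_m] \;\le\; A + 2mQ
\]
for every $m \in \mathbb{Z}$.

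The final step is immediate: because $Q < 0$, the affine function $m \mapsto A + 2mQ$ tends to $-\infty$ as $m \to +\infty$, so the inequality $A + 2mQ > 0$ fails for all sufficiently large $m$, which is the desired contradiction. There is no real obstacle in this argument; the only point that deserves care is confirming in step one that the uniqueness hypothesis genuinely upgrades the conclusion of Proposition \ref{prop:Zm0-1Zm0neg} from ``indefinite evolution occurs'' to the strict positivity $Z_{m-1}+Z_m > 0$, which is visible from its proof.
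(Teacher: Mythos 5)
Your proposal is correct and follows essentially the same route as the paper: both use Proposition \ref{prop:Zm0-1Zm0neg} to force $Z_{m-1}+Z_m>0$ for all $m$, then feed this into Eq.(\ref{eq:Zm+1ue}) to conclude that $A+2mQ$ must stay positive, contradicting $Q<0$ for large $m$. Your handling of the outer $\max$ (observing directly that $A+2mQ>\max[0,mQ-Z_m]+(Z_{m-1}+Z_m)>0$) is a slightly cleaner packaging of the paper's two-case split, but it is the same argument.
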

\begin{proof}
Assume that there exists a solution to Eq.(\ref{eq:udPIIsingle}) such that any forward and backward evolution for all $m \in \mathbb{Z}$ are unique.
Then it follows from Proposition \ref{prop:Zm0-1Zm0neg} that $Z_m +Z_{m+1} >0$ for all $m \in \mathbb{Z}$.
Since $Z_{m +1} \neq -Z_m $ and $Z_{m-1} +Z_{m} >0$, we have 
\begin{equation}
Z_{m +1} = -Z_m + A+2mQ -\max[0, mQ-Z_m ] -(Z_{m-1} +Z_{m}) ,
\end{equation}
by Eq.(\ref{eq:Zm+1ue}).
Therefore $Z_{m +1} + 2Z_m +Z_{m-1} =A+2mQ $ or $Z_{m +1} + Z_m +Z_{m-1} =A+mQ $.
If we take the value $m$ sufficiently large, then $Z_{m +1} + 2Z_m +Z_{m-1} <0$ or $Z_{m +1} + Z_m +Z_{m-1} <0 $.
However it contradicts to $Z_m +Z_{m+1} >0$ for all $m \in \mathbb{Z}$.
\end{proof}

\section{Determinant-type solutions of simultaneous equations} \label{sec:etaYIIT}

The ultradiscrete limit of determinant-type solutions of $q$-PII with a parameter was obtained in \cite{IIT}.
We write it by setting $C= B-A-(m_0^2+m_0)Q$ and $\chi =\alpha \beta $ in \cite[Theorem 3]{IIT}.

Assume that $Q<0$ and the constant $A$ in the p-ud PII is written as $A=(2N+1)Q$ for $N \in {\mathbb Z}_{\geq 0}$.
Let $m_0$ be a negative integer satisfying 
\begin{align}
& m_0 \leq \min (-3N-2, -N(N+1)/2 -1) ,
\label{eq:m0N}
\end{align}
$k_0 \in \{ 0, 1 , \cdots , N \}$ and $C$ be values such that 
\begin{equation}
\begin{array}{ll}
-m_0 Q - (N-k_0 )(N-k_0 +1)Q  < C & \\
\qquad < -m_0 Q- (N-k_0 +1)(N-k_0 +2)Q (< (m_0+1)Q), &  (k_0 \neq 0), \\
-m_0 Q - N(N-1)Q < C < (m_0 +1) Q  & (k_0=0),
\end{array}
\label{eq:CineqN}
\end{equation}
and $\chi \in \{ +1, -1\}$.
Using these notation, the following function $(\zeta ^{(N)}(m) , Z ^{(N)}(m))$ was obtained in \cite[Theorem 3]{IIT} by the p-ud limit of a solution of $q$-PII in terms of determinants, and it satisfies the single p-ud PII (Eq.(\ref{eq:udPIIsingle})).\\
(I) If $m \leq m_0 - 2N -1$, then 
\begin{equation}
(\zeta^{(N)}(m) , Z^{(N)}(m))=(+1 , (-m-2N-1)Q).
\label{eq:IITNsolm<<} 
\end{equation}
(II) If $m_0 - 2N \leq m \leq m_0 + N - 3k_0 +1 $, then 
\begin{align}
& (\zeta^{(N)}(m) , Z^{(N)}(m))= \label{eq:IITsolZ1} \\
& \begin{cases}
((-1)^{j} \chi , -C-j^2 Q) & (m=m_0 -2N + 3j) \\
(+1 , (m_0+j+1)Q) & (m=m_0 -2N + 3j +1) \\
((-1)^{j} \chi , C+(j+1)^2 Q) & (m=m_0 -2N + 3j +2),  
\end{cases}
\nonumber 
\end{align}
where $0 \leq j \leq N- k_0 $ in the first and the second cases and $0 \leq j \leq N- k_0 -1 $ in the third case.\\
(III) If $m = m_0 + N - 3k_0 +2 $ and $k_0 \neq 0$, then 
\begin{equation}
(\zeta^{(N)}(m) , Z^{(N)}(m))
= (-1 , (-m_0-N+k_0 -1)Q) .
\end{equation}
(IV) If $m_0 + N - 3k_0 +3 \leq m \leq m_0 + N $ and $k_0 \neq 0$, then
\begin{align}
& (\zeta^{(N)}(m) , Z^{(N)}(m))
= \nonumber \\
&  \begin{cases}
(+1 , (m_0+j)Q) & (m=m_0 -2N +3j) \\
((-1)^{j} \chi , C +2 m_0 Q+(j+1)^2 Q ) & (m=m_0 -2N +3j+1)  \\
((-1)^{j+1} \chi , -C -2 m_0 Q-(j+1)^2 Q)  & (m=m_0 -2N +3j+2), 
\end{cases}
\end{align}
where $N -k_0 +1 \leq j \leq N $ in the first case and $N- k_0 +1 \leq j \leq N-1 $ in the second and the third cases.\\
(V) If $m_0 + N +1 \leq m \leq -2N-1$, then
\begin{align}
(\zeta^{(N)}(m) , Z^{(N)}(m))
=(+1 , mQ) .
\label{eq:IITNsolm>>} 
\end{align}
On the function $(\zeta^{(N)}(m) , Z^{(N)}(m))$, we can confirm the following properties:\\
(i) If $k_0 =0$, then $Z^{(N)}(m) < mQ$ for $ m \leq m_0 + N$ and $Z^{(N)}(m)= mQ$ for $m=m_0 +N+1$.\\ 
(ii) If $k_0 \neq 0$, then $Z^{(N)}(m) < mQ$ for $ m \leq m_0 + N -1 $ and $Z^{(N)}(m)= mQ$ for $m=m_0 +N$. 

We now calculate the function $(\eta ^{(N)}(m) , Y ^{(N)}(m))$ associated to the above solution $(\zeta ^{(N)}(m) , Z ^{(N)}(m))$.
\begin{prop} \label{prop::}
The function $(\eta ^{(N)}(m) , Y ^{(N)}(m))$ associated to the solution $(\zeta ^{(N)}(m) , Z ^{(N)}(m))$ in Eqs.(\ref{eq:IITNsolm<<})--(\ref{eq:IITNsolm>>}) is written as follows:\\
(i) If $m_0 - 2N \leq m \leq m_0 + N - 3k_0 +1 $, then 
\begin{align}
& (\eta^{(N)}(m) , Y^{(N)}(m))
= \label{eq:IITsolY1} \\
& \begin{cases}
(+1 , 2j Q ) & (m=m_0 -2N + 3j) \\
((-1)^{j} \chi  , -C+m_0 Q +(-j^2 +j+1)Q) & (m=m_0 -2N + 3j +1) \\
((-1)^{j} \chi  , C+m_0 Q + (j+1)(j+2) Q ) & (m=m_0 -2N + 3j +2)  ,
\end{cases}
\nonumber 
\end{align}
where $0 \leq j \leq N- k_0 $.\\
(ii) If $m_0 + N - 3k_0 +3 \leq m \leq m_0 + N $ and $k_0 \neq 0$, then
\begin{align}
& (\eta^{(N)}(m) , Y^{(N)}(m))
= \nonumber \\
&  \begin{cases}
((-1)^{j} \chi  , -C - m_0 Q - j(j -1) Q ) & (m=m_0 -2N +3j) \\
((-1)^{j} \chi  ,  C +3 m_0 Q+(j^2 +3j +1) Q ) & (m=m_0 -2N +3j+1)  \\
(+1 , 2(j+1) Q ) & (m=m_0 -2N +3j+2) ,
\end{cases}
\end{align}
where $N -k_0 +1 \leq j \leq N $ in the first case and $N- k_0 +1 \leq j \leq N-1 $ in the second and the third cases.
\end{prop}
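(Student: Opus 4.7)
The plan is to carry out a forward induction on $m$, using Proposition \ref{prop:uniindef} to propagate $(\eta^{(N)}(m), Y^{(N)}(m))$ step by step from a verifiable base case at the left boundary.

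For the base case at $m = m_0 - 2N$ (the $j = 0$ case of substep A), the predecessor $m - 1$ lies in region (I) with $Z^{(N)}(m-1) = -m_0 Q$. Combined with $Z^{(N)}(m) = -C$ and the inequality $C > -m_0 Q$ (an immediate consequence of Eq.~\eqref{eq:CineqN} together with $Q < 0$), one gets $Z^{(N)}(m-1) + Z^{(N)}(m) < 0$. Proposition \ref{prop:uniindef}(iv) then forces $(\eta^{(N)}(m), Y^{(N)}(m)) = (+1, 0)$, matching the claim.

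For the inductive step within region (II), I split by the position of $m$ inside the length-three block indexed by $j$. At the A$\to$B and B$\to$C transitions, direct substitution of Eq.~\eqref{eq:IITsolZ1} gives $Z^{(N)}(m-1)+Z^{(N)}(m) > 0$ (positivity follows from Eq.~\eqref{eq:CineqN}), so Proposition \ref{prop:uniindef}(iii) yields both $Y^{(N)}(m) = Z^{(N)}(m-1) + Z^{(N)}(m)$ and $\eta^{(N)}(m) = \zeta^{(N)}(m-1)\zeta^{(N)}(m)$, both of which match Eq.~\eqref{eq:IITsolY1} after simplification. At the C$\to$A transition, however, $Z^{(N)}(m-1) + Z^{(N)}(m) = 0$, so (iii) collapses to the indefinite case and supplies only $Y^{(N)}(m) \leq 0$ and the (automatic) parity identity $\zeta^{(N)}(m-1)\zeta^{(N)}(m) = -1$. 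To pin down $Y^{(N)}(m)$, I invoke Proposition \ref{prop:uniindef}(i) at $m - 1$, using the inductive value $Y^{(N)}(m-1) = C + m_0 Q + (j+1)(j+2)Q$ from substep C. The check $Z^{(N)}(m-1) < (m-1)Q$ (which also guarantees $(\zeta^{(N)}(m-1), Z^{(N)}(m-1)) \neq (+1,(m-1)Q)$) follows from combining Eqs.~\eqref{eq:m0N} and \eqref{eq:CineqN}; the $Y$-equation then outputs $Y^{(N)}(m) = 2(j+1)Q$, and the parity relation $\eta^{(N)}(m)\eta^{(N)}(m-1) = \zeta^{(N)}(m-1)$ together with the inductive identity $\eta^{(N)}(m-1) = \zeta^{(N)}(m-1)$ forces $\eta^{(N)}(m) = +1$.

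Region (IV) is handled by the same three-step template inside each length-three block; the roles of (iii) and (i) are essentially swapped within the block (now B$\to$C becomes the cancellation step rather than C$\to$A), and the necessary positivity checks use the upper-bound half of Eq.~\eqref{eq:CineqN}, namely $C < -m_0 Q - (N-k_0+1)(N-k_0+2)Q$, instead of the lower-bound half. The main obstacle is precisely the cancellation step: there the $Z$-equation is silent on $Y^{(N)}(m)$, and one must rely on the $Y$-equation, with careful sign analysis of $(m-1)Q - Z^{(N)}(m-1)$ and of the nondegeneracy condition $(\zeta^{(N)}(m-1), Z^{(N)}(m-1)) \neq (+1, (m-1)Q)$, both of which depend delicately on which subinterval of Eq.~\eqref{eq:CineqN} the parameter $C$ occupies.
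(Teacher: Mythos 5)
Your proposal is correct and follows essentially the same route as the paper: both arguments read off $(\eta^{(N)}(m),Y^{(N)}(m))$ from the simplified evolution relations of Proposition \ref{prop:uniindef} — the relation $Z_{m}+Z_{m+1}=\max[Y_{m+1},0]$ at the two positions per block where the $Z$-sum is strictly positive, and the relation $Y_{m+1}+Y_m=A+2mQ-\max[mQ-Z_m,0]$ (with $Z_m<mQ$) at the remaining position where the $Z$-sum cancels. The only differences are organizational: the paper computes each block directly and steps backward within the block to reach the cancellation position, whereas you run a forward induction across blocks anchored by a base case at $m=m_0-2N$ (which, strictly speaking, is forced by the contrapositive of part (iii) of Proposition \ref{prop:uniindef} rather than by part (iv)).
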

\begin{proof}
We show (i).
We write $Z^{(N)}(m)$, $\zeta ^{(N)}(m)$, $Y^{(N)}(m)$ and $\eta ^{(N)}(m)$  as $Z_m$, $\zeta _m$, $Y_m$ and $\eta _m$ respectively.
Recall that $Z_{m+1} + Z_m = \max[Y_{m+1}, 0] $ and $\zeta _{m+1} \zeta _m =\eta _{m+1}$ for $Y_{m+1} >0$.
Since $Z_{m_0 -2N + 3j} + Z_{m_0 -2N + 3j +1} = -C-j^2 Q + (m_0+j+1)Q >0$ and $Z_{m_0 -2N + 3j+1} + Z_{m_0 -2N + 3j +2} = C+(j+1)^2 Q + (m_0+j+1)Q>0$, we have
\begin{align}
& \eta_{m_0 -2N + 3j +1} = (-1)^j \chi, \;  Y_{m_0 -2N + 3j +1} = -C-j^2 Q + (m_0+j+1)Q, \\
& \eta_{m_0 -2N + 3j +2} = (-1)^j \chi, \; Y_{m_0 -2N + 3j +2} = C+(j+1)^2 Q + (m_0+j+1)Q. \nonumber
\end{align}
It follows from $Z_m < m Q $ that $Y_{m_0 -2N + 3j+1} + Y_{m_0 -2N + 3j} = (2N+1)Q + (m_0 -2N + 3j ) Q +Z_{m_0 -2N + 3j}$ and we have $Y_{m_0 -2N + 3j} = 2j Q  $.
We also have $\eta _{m_0 -2N + 3j} =+1$.

(ii) is shown similarly.
\end{proof}
On the function $Y^{(N)}(m )$, we have $Y^{(N)}(m_0 -2N ) =0$ and 
\begin{equation}
\begin{array}{ll}
Y^{(N)}(m) < 0 & (m= m_0 -2N + 3j, \; j=1,2, \dots , N -k_0), \\
Y^{(N)}(m ) > 0 & (m= m_0 -2N + 3j +1 , \; j=0,1, \dots , N -k_0), \\
Y^{(N)}(m ) > 0 & (m= m_0 -2N + 3j +2 , \; j=0,1, \dots , N- k_0 -1). 
\end{array}
\end{equation}
If $k_0 \neq 0$, then $Y^{(N)}(m_0 +N-3k_0 +2)<0 $, $Y^{(N)}(m_0 +N-3k_0 +3)<0 $ and
\begin{equation}
\begin{array}{ll}
Y^{(N)}(m) > 0 & (m= m_0 -2N + 3j ,\; j= N -k_0 +2, \dots , N), \\
Y^{(N)}(m) > 0 & (m= m_0 -2N + 3j+1 ,\; j=N -k_0 +1, \dots , N-1),  \\
Y^{(N)}(m ) < 0 & (m= m_0 -2N + 3j+2 ,\; j=N -k_0 +1, \dots , N-1). 
\end{array}
\end{equation}
By applying Corollary \ref{cor:uniindef}, uniqueness and indefiniteness of the solution of simultaneous p-ud PII can be described.
\begin{prop}
(i) If $k_0 \neq 0$, then we have unique evolution on $Y^{(N)}(m_0 -2N) , Z^{(N)}(m_0 -2N) , Y^{(N)}(m_0 -2N +1) , \dots ,Y^{(N)}(m_0 +N)  ,  Z^{(N)}(m_0+N) $ and indefinite evolution occurs on determining $Z^{(N)}(m_0 -2N-1)$ and $Y^{(N)}(m_0 +N+1)$.\\
(ii) If $k_0 = 0$, then we have unique evolution on $Y^{(N)}(m_0 -2N) , Z^{(N)}(m_0 -2N) , Y^{(N)}(m_0 -2N +1) , \dots ,Y^{(N)}(m_0 +N+1)  ,  Z^{(N)}(m_0 +N+1)$ and indefinite evolution occurs on determining $Z^{(N)}(m_0 -2N-1)$ and $Y^{(N)}(m_0 +N+2)$.
\end{prop}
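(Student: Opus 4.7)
The plan is to invoke Corollary~\ref{cor:uniindef} repeatedly, checking at each index $m$ in the stated range whether one of the two degeneracy conditions $(\zeta^{(N)}(m),Z^{(N)}(m)) = (+1, mQ)$ or $(\eta^{(N)}(m),Y^{(N)}(m)) = (+1, 0)$ is met. These are precisely the hypotheses that trigger the indefinite branches in (i)--(iv) of that Corollary, so every statement of the Proposition reduces to locating where the data of Eqs.(\ref{eq:IITNsolm<<})--(\ref{eq:IITNsolm>>}) and Proposition~\ref{prop::} take these degenerate values.

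First I would treat the two indefinite endpoints. At the backward end, Proposition~\ref{prop::}(i) with $j=0$ gives $(\eta^{(N)}(m_0-2N), Y^{(N)}(m_0-2N)) = (+1, 0)$, so Corollary~\ref{cor:uniindef}(iv) makes the backward step to $(\zeta^{(N)}(m_0-2N-1), Z^{(N)}(m_0-2N-1))$ indefinite. At the forward end with $k_0 \neq 0$, case (IV) with $j=N$ gives $(\zeta^{(N)}(m_0+N), Z^{(N)}(m_0+N)) = (+1, (m_0+N)Q)$, and Corollary~\ref{cor:uniindef}(ii) makes the forward step to $(\eta^{(N)}(m_0+N+1), Y^{(N)}(m_0+N+1))$ indefinite. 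For $k_0 = 0$, the same reasoning at $m = m_0+N+1$, where case (V) gives $(\zeta^{(N)}(m), Z^{(N)}(m)) = (+1, mQ)$, yields the indefiniteness of $(\eta^{(N)}(m_0+N+2), Y^{(N)}(m_0+N+2))$.

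For uniqueness in the interior I would combine the two properties (i), (ii) stated immediately before Proposition~\ref{prop::}, which already give $(\zeta^{(N)}(m), Z^{(N)}(m)) \neq (+1, mQ)$ for every interior $m$ in the relevant range, with a case-by-case inspection of the $Y$-values in Proposition~\ref{prop::} and the sign remarks following it. The values of the form $2jQ$ with $j \geq 1$ and $2(j+1)Q$ are strictly negative since $Q<0$; the remaining $Y$-values in case (II) were already shown to be strictly positive during the proof of Proposition~\ref{prop::} (they are the sums $Z_m+Z_{m+1}$ shown there to be positive), and the remaining $Y$-values in case (IV) are strictly negative by the bounds on $C$ in Eq.(\ref{eq:CineqN}). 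Hence $(\eta^{(N)}(m), Y^{(N)}(m)) \neq (+1, 0)$ throughout the interior, and a successive application of Corollary~\ref{cor:uniindef}(i)--(ii) along the chain delivers the uniqueness.

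The main obstacle is the bookkeeping: the three residue classes of $m \pmod 3$ in cases (II) and (IV), together with the isolated point of case (III) in the branch $k_0 \neq 0$ (whose sign $Y^{(N)}(m_0+N-3k_0+2)<0$ is noted after Proposition~\ref{prop::}), yield several $Y$-expressions whose signs must each be verified against Eq.(\ref{eq:CineqN}) and the standing constraint Eq.(\ref{eq:m0N}) on $m_0$. Once these sign checks are collected, Corollary~\ref{cor:uniindef} is applied mechanically along the evolution chain to complete the proof.
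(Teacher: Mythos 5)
Your proposal is correct and takes essentially the same route as the paper, which offers no separate proof but simply collects the sign data for $Y^{(N)}(m)$ and $Z^{(N)}(m)-mQ$ recorded around Proposition~\ref{prop::} and invokes Corollary~\ref{cor:uniindef} at each step of the chain, exactly as you do. One small slip: for the branch $k_0\neq 0$ the paper's own sign table shows that the $Y$-values at $m=m_0-2N+3j$ $(j\geq N-k_0+2)$ and $m=m_0-2N+3j+1$ $(N-k_0+1\leq j\leq N-1)$ are strictly \emph{positive} rather than negative as you assert, but since the argument only requires $(\eta^{(N)}(m),Y^{(N)}(m))\neq(+1,0)$, this does not affect the conclusion.
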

An example of the ultradiscrete limit of a determinant-type solution $(\zeta^{(N)}(m), Z^{(N)}(m) )$ for $N=3$ and $Q=-3$ was given in Eq.(\ref{eq:z3}) in the introduction and the function $(\eta ^{(N)}(m) , Y ^{(N)}(m))$ associated to the solution was given in Eq.(\ref{eq:y3}).

In \cite{Iga}, the ultradiscrete limit of determinant-type solutions of $q$-PII for the case $Q<0$ and $A=(2M+1)Q$, $M \in {\mathbb Z}_{\leq -1}$ was obtained, and we review it in the appendix.
We describe the ultradiscrete function here.
Let $m_0$ be a value satisfying 
\begin{align}
m_0 \leq \min ( 3M+1, -M(M+1)/2 -1 ) , \label{eq:m0M}
\end{align}
$k_0 \in \{ 0, 1 , \cdots , -M-1 \}$ and $C$ be values such that 
\begin{equation}
\begin{array}{ll}
-m_0 Q - (M+k_0+1)(M+k_0)Q  \leq C & \\
\qquad < -m_0 Q - (M+k_0)(M+k_0-1)Q , &  k_0 \neq 0, \\
-m_0 Q- M(M+1)Q < C < (m_0+1)Q , & k_0 = 0,
\end{array}
\label{eq:CineqM}
\end{equation}
and $\chi \in \{ +1, -1\}$.
Using these notation, the following function $(\zeta ^{(M)}(m) , Z ^{(M)}(m))$ is obtained by setting $C= B-A-(m_0^2+m_0)Q$ and $\chi =\alpha \beta $ in Theorem \ref{thm:zpum_0_M} in the appendix, and it satisfies p-ud PII (Eq.(\ref{eq:udPIIsingle})).
\begin{prop} \label{prop:zpum_0_M}
(i) If $m \leq m_0 + M$, then
\begin{equation}
(\zeta^{(M)}(m), Z^{(M)}(m)) = (+1, mQ) . \label{eq:IgaMsolm<<}
\end{equation}
(ii) If $m_0 + M + 1 \leq m \leq m_0-2M-3k_0$ $(k_0\neq 0)$ or $m_0 + M + 1 \leq m \leq m_0-2M-1$ $(k_0=0)$, then
\begin{align}
& (\zeta^{(M)}(m), Z^{(M)}(m)) = \nonumber \\
& \begin{cases}
(+1, (m_0+M+j)Q) & (m=m_0 + M + 3j - 2) \\
((-1)^{j-1} \chi , C+(j^2+M)Q) & (m=m_0 + M + 3j -1) \\
((-1)^{j-1} \chi , -C+(-j^2+2j+M)Q) & (m=m_0 + M + 3j),
\end{cases}
\end{align}
$(1 \leq j \leq -M-k_0)$.
The case $k_0=0$ and $m=m_0-2M$ is included in (iv).\\
(iii) If $m_0 - 2M - 3k_0 + 1 \leq m \leq m_0 - 2M - 2$ and $k_0 \neq 0$, then
\begin{align}
& (\zeta^{(M)}(m), Z^{(M)}(m)) = \nonumber \\
&  \begin{cases}
((-1)^{j-1} \chi , C+(2m_0+j^2+M)Q) & (m=m_0+M+3j-2) \\
((-1)^{j-1} \chi , -C+(-2m_0-j^2+2j+M)Q) & (m=m_0+M+3j-1)  \\
(+1, (m_0+M+j)Q) & (m=m_0+M+3j).
\end{cases}
\end{align}
Here $-M-k_0+1 \leq j \leq -M$ for the first case and $-M-k_0+1 \leq j \leq -M-1$ for the second and the third cases.\\
(iv) If $m_0 - 2M -1 \leq m \leq M+1$ $(k_0\neq 0)$ or $m_0 - 2M \leq m \leq M+1$ $(k_0=0)$, then
\begin{align}
(\zeta^{(M)}(m), Z^{(M)}(m))
=(+1, (-m-2M-1)Q).
\label{eq:IgaMsolm<}
\end{align}
\end{prop}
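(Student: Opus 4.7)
The plan is to derive Proposition \ref{prop:zpum_0_M} directly from Theorem \ref{thm:zpum_0_M} of the appendix (reviewed from \cite{Iga}) by the reparametrization $C = B - A - (m_0^2 + m_0)Q$ and $\chi = \alpha\beta$ stated immediately above. Since Theorem \ref{thm:zpum_0_M} already furnishes the p-ud limit of the determinant-type solutions and asserts that it satisfies the single p-ud PII (Eq.(\ref{eq:udPIIsingle})), nothing about the p-ud PII equation itself needs to be reverified; the task reduces to checking that the formulas in cases (i)--(iv) coincide with those of the theorem after substitution.

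First I would translate the parameter constraints: using $A = (2M+1)Q$, the inequalities on $C$ in Eq.(\ref{eq:CineqM}) convert one-to-one into the corresponding inequalities on $B$ in Theorem \ref{thm:zpum_0_M}, while the constraint Eq.(\ref{eq:m0M}) on $m_0$ is inherited verbatim. Then I would proceed case by case: case (i) and case (iv) are the ``$\zeta=+1$, $Z$ linear in $m$'' tails and can be read off directly from the appendix; cases (ii) and (iii) require substituting $B = C + A + (m_0^2 + m_0)Q = C + (2M + 1 + m_0^2 + m_0)Q$ into the amplitude expressions of the theorem and simplifying the resulting quadratics in $j$ so as to match the stated forms $C + (j^2 + M)Q$, $-C + (-j^2 + 2j + M)Q$, and their shifted counterparts in case (iii). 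The parity factors $(-1)^{j-1}\chi$ emerge from an alternating sign in the determinant structure together with the product $\alpha\beta$, so the identification $\chi = \alpha\beta$ makes the parities transparent.

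The main obstacle is careful bookkeeping of indices and endpoints. One must verify that the three subcases $m = m_0 + M + 3j - 2,\, 3j - 1,\, 3j$ in case (ii) share the common range $1 \leq j \leq -M - k_0$, that case (iii) takes over at $j = -M - k_0 + 1$ (running through $-M$ or $-M - 1$ according to the subcase), and that the boundary values of $j$ yield $m$-values that dovetail with the adjacent cases without overlap or gap. In particular the boundary $m = m_0 - 2M - 3k_0$ between (ii) and (iii) and the boundary $m = m_0 - 2M - 2$ between (iii) and (iv) should be checked, and when $k_0 = 0$ case (iii) is absent so that case (ii) at $j = -M$ should connect directly to case (iv) at $m = m_0 - 2M$. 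As a sanity check I would also verify the $M$-analogues of the observations about $Z^{(N)}(m)$ versus $mQ$ listed before Proposition \ref{prop::}, which follow from the explicit formulas once substituted and also confirm that the inequalities \eqref{eq:CineqM} on $C$ have been used consistently.
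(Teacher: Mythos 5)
Your proposal matches the paper's own derivation: Proposition \ref{prop:zpum_0_M} is obtained exactly by substituting $C=B'-A'-(m_0^2+m_0)Q$ and $\chi=\alpha\beta$ into Theorem \ref{thm:zpum_0_M} of the appendix, with precisely the endpoint and index bookkeeping you describe. One caution: the constants in that reparametrization are the shifts $A'$, $B'$ of the $q$-Airy linear combination in Eq.(\ref{eq:w}), not the p-ud PII parameter $A=(2M+1)Q$, so your identification $B=C+(2M+1+m_0^2+m_0)Q$ is not justified as written; however, only the difference $B'-A'=C+(m_0^2+m_0)Q$ enters the theorem's amplitude formulas and parameter inequalities, so the resulting expressions are unaffected.
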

The function $Z^{(M)}(m)$ in Proposition \ref{prop:zpum_0_M} satisfies $Z^{(M)}(m) =mQ $ for $m \leq m_0 + M + 1$ and
\begin{equation}
\begin{array}{ll}
Z^{(M)}(m) >mQ & (m=m_0 + M + 3j - 2, \; j= 2,3, \dots , -M-k_0 ),\\
Z^{(M)}(m) <mQ & (m=m_0 + M + 3j - 1, \; j= 1,2, \dots ,-M-k_0 ), \\
Z^{(M)}(m) <mQ & (m=m_0 + M + 3j , \; j= 1,2, \dots , -M-k_0-1) .
\end{array}
\end{equation}
If $k_0 \neq 0$, then $Z^{(M)}(m) >mQ $ for $m=m_0 -2M - 3k_0 , \: m_0 -2M - 3k_0 +1$, and
\begin{equation}
\begin{array}{ll}
Z^{(M)}(m) < mQ & (m=m_0 + M + 3j - 2, \; j=  -M-k_0+2, \dots ,-M ),\\
Z^{(M)}(m) < mQ & (m=m_0 + M + 3j - 1, \; j=  -M-k_0+1, \dots ,-M ), \\
Z^{(M)}(m) > mQ & (m=m_0 + M + 3j , \; j=  -M-k_0+1, \dots ,-M ).
\end{array}
\end{equation}
We now calculate the function $(\eta ^{(M)}(m) , Y ^{(M)}(m))$ associated to the above solution $(\zeta ^{(M)}(m) , Z ^{(M)}(m))$.
\begin{prop} \label{prop:::}
The function $(\eta ^{(M)}(m) , Y ^{(M)}(m))$ associated to the function $(\zeta ^{(M)}(m) , Z ^{(M)}(m))$ in Proposition \ref{prop:zpum_0_M} is written as follows:\\
(i) If $m_0 + M + 2 \leq m \leq m_0-2M-3k_0$, then
\begin{align}
& (\eta^{(M)}(m), Y^{(M)}(m)) = \nonumber \\
& \begin{cases}
((-1)^{j-1} \chi , C+ m_0 Q + 2M Q + j(j+1)Q) & (m=m_0 + M + 3j -1) \\
(+1 , 2(M+j)Q) & (m=m_0 + M + 3j) \\
((-1)^{j-1} \chi , -C + m_0 Q +2MQ +(-j^2+3j+1)Q) & (m=m_0 + M + 3j +1 ), \\
\end{cases}
\end{align}
where $1 \leq j \leq -M- k_0 $ in the first and the second cases and $0 \leq j \leq -M- k_0 -1 $ in the third case.\\
(ii) If $k_0 \neq 0$, then 
\begin{align}
 (\eta^{(M)}(m), Y^{(M)}(m)) = & (-1 ,(2m_0 - 2M -4 k_0 +1) Q ) \\
& \qquad \qquad \qquad (m= m_0 -2M -3k_0 +1). \nonumber
\end{align}
(iii) If $m_0 - 2M - 3k_0 + 2 \leq m \leq m_0 - 2M - 1$ and $k_0 \neq 0$, then
\begin{align}
& (\eta^{(M)}(m), Y^{(M)}(m)) = \nonumber \\
&  \begin{cases}
(+1 , 2(M+j)Q ) & (m=m_0+M+3j-1)  \\
((-1)^{j-1} \chi ,  -C+(-m_0+2M-j^2+3j)Q ) & (m=m_0+M+3j) \\
((-1)^{j-1} \chi , C+(3m_0+2M+ j^2+3j+1)Q ) & (m=m_0+M+3j+1) ,
\end{cases}
\end{align}
where $-M-k_0+1 \leq j \leq -M$ for the first case and the second cases, and $-M-k_0+1 \leq j \leq -M-1$ for the third case.
\end{prop}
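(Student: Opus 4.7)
The plan is to mirror the proof of Proposition \ref{prop::} using parts (i) and (iii) of Proposition \ref{prop:uniindef} as the main tools. When $Z_{m-1}+Z_m>0$, Proposition \ref{prop:uniindef}(iii) gives $Y_m=Z_{m-1}+Z_m$ and $\eta_m=\zeta_{m-1}\zeta_m$; when $Z^{(M)}(m)<mQ$ at a previously determined site, Proposition \ref{prop:uniindef}(i) gives the recursion $Y_{m+1}+Y_m=A+mQ+Z_m$ together with $\eta_{m+1}\eta_m=\zeta_m$, which propagates the values of $(\eta,Y)$ to the next index.

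For case (i) I substitute the explicit values of $(\zeta^{(M)},Z^{(M)})$ from Proposition \ref{prop:zpum_0_M}(ii) into $Z_{m-1}+Z_m$ for each of the three residues of $m$ modulo $3$. A short calculation gives $Z_{m-1}+Z_m=C+(j^2+j+m_0+2M)Q$ at $m=m_0+M+3j-1$, equal to $2(M+j)Q$ at $m=m_0+M+3j$, and equal to $-C+(-j^2+3j+m_0+2M+1)Q$ at $m=m_0+M+3j+1$. Each of these agrees with the claimed $Y^{(M)}(m)$, so once positivity is verified (from the bounds on $C$ in Eq.(\ref{eq:CineqM}) together with $M+j<0$ in the middle subcase) the values of $Y_m$ and $\eta_m=\zeta_{m-1}\zeta_m$ drop out directly. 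Case (iii) is handled symmetrically from the data in Proposition \ref{prop:zpum_0_M}(iii), and in both cases the sign products collapse cleanly because $\chi^2=1$.

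For case (ii), the single index $m=m_0-2M-3k_0+1$ straddles ranges (ii) and (iii) of Proposition \ref{prop:zpum_0_M}. Summing the explicit $Z$-values at $m-1$ (from range (ii) with $j=-M-k_0$) and at $m$ (from range (iii) with $j=-M-k_0+1$) yields $Z_{m-1}+Z_m=(2m_0-2M-4k_0+1)Q$, which is positive by the bound $m_0\le 3M+1$ in Eq.(\ref{eq:m0M}) and matches $Y^{(M)}(m)$, while the product of the two parities gives $(-1)^{-M-k_0-1}\chi\cdot(-1)^{-M-k_0}\chi=-1$, consistent with the stated $\eta^{(M)}(m)=-1$. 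For any boundary index where $Z_{m-1}+Z_m\le 0$ (for example when $k_0=0$ and the middle subcase of (i) degenerates at $j=-M$), I would instead invoke Proposition \ref{prop:uniindef}(i) at $m-1$, using the strict inequality $Z^{(M)}(m-1)<(m-1)Q$ from the sign information listed between Propositions \ref{prop:zpum_0_M} and \ref{prop:::}; this determines $Y_m$ via $Y_m=A+(m-1)Q+Z_{m-1}-Y_{m-1}$ and $\eta_m$ via $\eta_m\eta_{m-1}=\zeta_{m-1}$, from the previously obtained $(\eta_{m-1},Y_{m-1})$.

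The main obstacle is the bookkeeping rather than any genuine algebraic difficulty: the case analysis splits into many ranges of $j$ and two parameter regimes ($k_0=0$ versus $k_0\neq 0$), and at each step one must verify the relevant sign hypothesis ($Z_{m-1}+Z_m>0$, or $(\zeta^{(M)}(m-1),Z^{(M)}(m-1))\neq(+1,(m-1)Q)$) against the bounds on $C$ in Eq.(\ref{eq:CineqM}) before applying Proposition \ref{prop:uniindef}. Once these positivity checks are organised, the formulas in the statement follow mechanically.
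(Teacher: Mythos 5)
Your proposal is correct and follows essentially the same route as the paper: parts (i) and (iii) are obtained, exactly as in the paper's one-line argument, from $Z_{m-1}+Z_m>0$, $Z_{m-1}+Z_m=\max[Y_m,0]$ and $\eta_m=\zeta_{m-1}\zeta_m$, and your explicit fallback to Proposition \ref{prop:uniindef}(i) at the endpoint indices where $Z_{m-1}+Z_m\le 0$ (so that $Y^{(M)}(m)=0$, e.g.\ $m=m_0-2M$ for $k_0=0$ or $m=m_0-2M-1$ for $k_0\neq 0$) is genuinely needed there and is left implicit in the paper. The only divergence is part (ii): the paper propagates through $m=m_0-2M-3k_0+1$ via the $Y$-equation, using $Z^{(M)}(m_0-2M-3k_0)>(m_0-2M-3k_0)Q$ to get $Y_{m+1}+Y_m=A+2mQ$ and $\eta_{m+1}=-\eta_m$, whereas you sum the two $Z$-values across the boundary between ranges (ii) and (iii) of Proposition \ref{prop:zpum_0_M}; both computations give $(-1,(2m_0-2M-4k_0+1)Q)$, and your positivity check via $m_0\le 3M+1$ makes the variant equally valid.
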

\begin{proof}
(i) and (iii) are shown by using $Z_{m+1} + Z_m >0 $, $Z_{m+1} + Z_m = \max[Y_{m+1}, 0] $ and $\zeta _{m+1} \zeta _m =\eta _{m+1}$ for $Y_{m+1} >0$. 
We show (ii).
In the case $j=-M-k_0$, we have $Z_{m_0 +M +3j } -(m_0+M+3j)Q = -C -m_0 Q +(M+k_0 ) (M+k_0-1)Q >0$.
By applying the evolution of p-ud PII, we have $Y_{m_0 -2M-3k_0+1} = -Y_{m_0 +M +3(-M-k_0) } + (2M+1)Q +2(m_0 -2M-3k_0+1)Q $ and $\eta _{m_0 -2M -3k_0 +1} =-\eta _{m_0 -2M -3k_0 }$. Therefore we have (ii).
\end{proof}
If $k_0 \neq 0$ (resp. $k_0 =0$), then $Y^{(M)}(m)>0$ for $m_0 + M + 1 \leq m \leq m_0-2M-2$ (resp. $m_0 + M + 1 \leq m \leq m_0-2M-1$) and $Y^{(M)}(m)=0$ for $m=m_0 -2M-1$ (resp. $m=m_0 -2M$).
\begin{prop}
(i) If $k_0 \neq 0$, then we have unique evolution on $Z^{(M)}(m_0 +M+1) , Y^{(M)}(m_0 +M+2) , Z^{(M)}(m_0 +M+2) , \dots ,Z^{(M)}(m_0 -2M - 2)  ,  Y^{(M)}(m_0 -2M - 1) $ and indefinite evolution occurs on determining $Y^{(M)}(m_0 +M+1 )$ and $Z^{(M)}(m_0 -2M-1 )$.
\\
(ii) If $k_0 = 0$, then we have unique evolution on $Z^{(M)}(m_0 +M+1) , Y^{(M)}(m_0 +M+2) , Z^{(M)}(m_0 +M+2) , \dots ,Z^{(M)}(m_0 -2M - 1)  ,  Y^{(M)}(m_0 -2M ) $ and indefinite evolution occurs on determining $Y^{(M)}(m_0 +M+1)$ and $Z^{(M)}(m_0 -2M)$.
\end{prop}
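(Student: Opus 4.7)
The plan is to verify the statement by pointwise application of Corollary~\ref{cor:uniindef} along the alternating chain $Z^{(M)}(m_0+M+1), Y^{(M)}(m_0+M+2), Z^{(M)}(m_0+M+2), \dots$, using the explicit formulas of Propositions~\ref{prop:zpum_0_M} and~\ref{prop:::}. The argument parallels the proof of the analogous statement following Proposition~\ref{prop::} for the $N \ge 0$ case, with left and right roles exchanged: for $M<0$ the wall of $(\zeta,Z)$-values equal to $(+1,mQ)$ lies on the left of the finite window while the isolated $(\eta,Y)=(+1,0)$ point lies on the right.

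The first step is to identify the set $\{m : (\zeta^{(M)}(m),Z^{(M)}(m))=(+1,mQ)\}$. By Proposition~\ref{prop:zpum_0_M}(i) this equality holds for $m \le m_0+M$, and reading case (ii) at $j=1$ also produces the equality at $m=m_0+M+1$. The strict inequalities for $Z^{(M)}(m)$ versus $mQ$ recorded just before Proposition~\ref{prop:::} then rule out any further occurrence for $m_0+M+2 \le m \le -2M-1$. The second step is to identify $\{m : (\eta^{(M)}(m),Y^{(M)}(m))=(+1,0)\}$ within the range of Proposition~\ref{prop:::}. The only pattern compatible with $\eta=+1$ and $Y$ being a pure integer multiple of $Q$ is $(+1,2(M+j)Q)$, which vanishes iff $j=-M$; this value of $j$ lies in the admissible range of case (i) only when $k_0=0$ (giving $m=m_0-2M$) and in the admissible range of case (iii) only when $k_0 \neq 0$ (giving $m=m_0-2M-1$). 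The remaining patterns, of the form $((-1)^{j-1}\chi, \pm C + (\text{integer})\,Q)$, do not attain $(+1,0)$ because the critical values of $C$ that would force $Y=0$ are excluded by the half-open interval (\ref{eq:CineqM}).

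With these two location results in hand, each interior transition of the chain satisfies the hypothesis of Corollary~\ref{cor:uniindef}(i)--(iv), and therefore each successive value is uniquely determined by the simultaneous equations (\ref{eq:udYYZ})--(\ref{eq:udYZZ}). At the left boundary, Corollary~\ref{cor:uniindef}(iii) applied at $m=m_0+M+1$, where $(\zeta^{(M)}(m_0+M+1),Z^{(M)}(m_0+M+1))=(+1,(m_0+M+1)Q)$, yields indefiniteness of $Y^{(M)}(m_0+M+1)$. At the right boundary, Corollary~\ref{cor:uniindef}(i) applied at $m=m_0-2M-1$ (resp.\ $m=m_0-2M$) yields indefiniteness of $Z^{(M)}(m_0-2M-1)$ (resp.\ $Z^{(M)}(m_0-2M)$), since $(\eta^{(M)},Y^{(M)})=(+1,0)$ holds at that index. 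Assertions (i) and (ii) follow.

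The main obstacle is the exhaustive verification in step two that the $\pm C$-type amplitudes of Proposition~\ref{prop:::} never vanish when their sign equals $+1$: for each admissible $j$ with $(-1)^{j-1}\chi=+1$, one must compute the critical value of $C$ that would force the corresponding $Y$ to vanish and check against (\ref{eq:CineqM}) that it lies outside the allowed half-open interval. Once this bookkeeping is dispatched, the rest of the argument is mechanical from Corollary~\ref{cor:uniindef}.
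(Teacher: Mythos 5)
Your argument is correct and follows essentially the same route as the paper: the paper states this proposition without a separate proof, reading it off from Corollary \ref{cor:uniindef} together with the displayed facts locating the points where $(\zeta^{(M)}(m), Z^{(M)}(m)) = (+1, mQ)$ (namely $m \le m_0+M+1$) and where $(\eta^{(M)}(m), Y^{(M)}(m)) = (+1, 0)$ (namely $m = m_0-2M-1$ for $k_0 \neq 0$ and $m = m_0-2M$ for $k_0 = 0$), which is exactly your steps one and two. The residual bookkeeping you flag --- checking against Eq.(\ref{eq:CineqM}) that the $\pm C$-type amplitudes of Proposition \ref{prop:::} never vanish --- is likewise left implicit in the paper, which simply asserts $Y^{(M)}(m) > 0$ on the interior of the window.
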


We show an example.
If $M=-3$, $Q=-2$, $m_0=-10$, $k_0=1$, $C=-10$, $\chi =+1$, then the function $(\zeta^{(M)}(m), Z^{(M)}(m)) $ in Proposition \ref{prop:zpum_0_M} is written as
\begin{align}
& (\zeta^{(-3)}(m), Z^{(-3)}(m))= \left\{
\begin{array}{ll}
 (+1,-2m) & (m\leq -13) \\
 (+1,24) & (m=-12) \\
 (+1,-6) & (m=-11) \\
 (+1,14) & (m=-10) \\
 (+1, 22) & (m=-9 ) \\
 (-1, -12) & (m=-8 ) \\
 (-1, 16) & (m=-7 ) \\
 (+1, 18) & (m=-6 ) \\
 (+1, 2m-10 ) & (-5 \leq m \leq -2 ) .
\end{array}
\right. \label{eq:ZM-3}
\end{align}
Note that we fixed an error in \cite{Iga}.
It follows from Proposition \ref{prop:::} that 
\begin{align}
( \eta^{(-3)}(m), Y^{(-3)}(m))= \left\{
\begin{array}{ll}
(+1,18) & (m=-11) \\
(+1,8) & (m=-10) \\
(+1,36) & (m=-9 ) \\
(-1,10) & (m=-8 ) \\
(+1,4) & (m=-7 ) \\
(-1,34) & (m=-6 ) \\
(+1,0) & (m=-5 ) .
\end{array}
\right.\label{eq:YM-3}
\end{align}

\section{Two parameter solutions} \label{sec:twoparameter}
In this section, we investigate two parameter solutions of the simultaneous p-ud PII (Eqs.(\ref{eq:udYYZ}) and (\ref{eq:udYZZ})).
Firstly we investigate solutions under the condition 
\begin{equation}
Y_m< 0, \; Z_m > mQ \quad (m=m', m'+1,\dots ).
\end{equation}
Then it follows from Eqs.(\ref{eq:evolu2})--(\ref{eq:evolp}) that the simultaneous p-ud PII is written as
\begin{align}
& Y_{m+1} + Y_m = A + 2m Q , \; Z_{m+1} + Z_m = 0, \label{eq:sol>>eqs} \\
& \eta _{m+1} \eta _m =-1, \; \zeta _{m+1} \zeta _m =-1 . \nonumber
\end{align}
By setting $Z_{m'}=d_1$, $Y_{m'} = ((2m'-1)Q+A)/2 +d_2$, $\zeta _{m'} =\zeta $ and $\eta _{m'} =\eta $, it is solved as
\begin{align}
& (\eta _m , Y_{m} ) = ((-1)^{m-m'} \eta , \frac{(2m-1)Q+A}{2} +d_2 (-1)^{m-m'}), \label{eq:sol01} \\
& ( \zeta _m, Z_m )=((-1)^{m-m'} \zeta , d_1 (-1)^{m-m'}), \nonumber
\end{align}
and satisfies $Y_{m+2} = Y_{m} +2Q$ and $Z_{m+2}= Z_{m}$.
Namely it has 2-periodic linear structure.
By combining with the condition $Y_m< 0$ and $Z_m > mQ$, we have the following proposition:
\begin{prop} \label{prop:sol>>} (Type $++$)\\
(i) Let $d_1$ and $d_2$ be real constants and $\eta , \zeta \in \{ \pm 1 \}$.
If $Q<0$ and the integer $m'$ satisfies
\begin{align}
& m'Q < \min (d_1,-d_1 - Q) , \label{eq:sol>>cond} \\
& 2m'Q < -Q -A+\min (d_2, -d_2-2Q) , \nonumber
\end{align}
then the functions $( \eta _m, Y_m )$ and $( \zeta _m, Z_m )$ defined by Eq.(\ref{eq:sol01}) satisfy p-ud PII (Eqs.(\ref{eq:udYYZ}) and (\ref{eq:udYZZ})) for $m \geq m'$.\\
(ii) If $Q<0$ and there exists a integer $m'$ such that the functions $( \eta _m, Y_m )$ and $( \zeta _m, Z_m )$ satisfy p-ud PII for $m \geq m' $ and $Y_m< 0$ and $Z_m > mQ$ for $m=m'$ and $m'+1$, then they are written in the form of Eq.(\ref{eq:sol01}) and satisfy $Y_m< 0$ and $Z_m > mQ$ for $m \geq m'$.
\end{prop}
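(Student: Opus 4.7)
The plan is to exploit the reduction already performed in Eqs.(\ref{eq:sol>>eqs})--(\ref{eq:sol01}): under the regime $Y_m<0$ and $Z_m>mQ$, Proposition \ref{prop:uniindef} guarantees $(\zeta_m,Z_m)\neq(+1,mQ)$ and $(\eta_{m+1},Y_{m+1})\neq(+1,0)$, so Eqs.(\ref{eq:udYYZ})--(\ref{eq:udYZZ}) collapse into the four relations of Eq.(\ref{eq:sol>>eqs}), whose general solution is exactly Eq.(\ref{eq:sol01}). Thus both parts of the proposition reduce to showing that the regime conditions $Y_m<0$ and $Z_m>mQ$ hold, respectively can be propagated, for all $m\geq m'$ from the two-point input data at $m'$ and $m'+1$.

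For part (i), I would first substitute Eq.(\ref{eq:sol01}) directly into Eq.(\ref{eq:sol>>eqs}); this is routine since the ansatz has been engineered so that $Z_{m+2}=Z_m$ and $Y_{m+2}=Y_m+2Q$. Next, since $Q<0$ the map $m\mapsto mQ$ is strictly decreasing on each parity class, so for $m\geq m'$ the inequality $Z_m>mQ$ attains its extreme case only at $m=m'$ (for $Z_m=d_1$) or $m=m'+1$ (for $Z_m=-d_1$); collapsing these two endpoint checks gives the first line of Eq.(\ref{eq:sol>>cond}). The endpoint analysis for $Y_m<0$ is analogous, using that $((2m-1)Q+A)/2$ is decreasing in $m$, and yields the second line of Eq.(\ref{eq:sol>>cond}). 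With the regime conditions holding for every $m\geq m'$, Proposition \ref{prop:uniindef} then delivers Eqs.(\ref{eq:udYYZ})--(\ref{eq:udYZZ}).

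For part (ii), I would argue by induction on $m\geq m'$. The base step is provided by the hypothesis $Y_m<0$, $Z_m>mQ$ at $m=m',m'+1$, which through Corollary \ref{cor:uniindef} makes the simultaneous evolution unique and therefore forces the local recurrences of Eq.(\ref{eq:sol>>eqs}) to hold. The inductive step then exploits $Z_{m+2}=Z_m>mQ>(m+2)Q$ and $Y_{m+2}=Y_m+2Q<0$, both of which use $Q<0$ decisively, to extend the regime from the pair $(m,m+1)$ to $(m+1,m+2)$. Iterating forward gives the regime for all $m\geq m'$, and the closed form Eq.(\ref{eq:sol01}) then drops out from integrating the linear two-periodic system with initial data $d_1:=Z_{m'}$, $d_2:=Y_{m'}-((2m'-1)Q+A)/2$, $\zeta:=\zeta_{m'}$, $\eta:=\eta_{m'}$.

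The main obstacle is not conceptual but combinatorial: one must keep the four sign factors $\zeta_{m+1}\zeta_m$, $\eta_{m+1}\eta_m$, $(-1)^{m-m'}$, and the branch selection inside the $\min$ in Eq.(\ref{eq:sol>>cond}) all mutually consistent, while simultaneously separating the two parity classes of $m-m'$ in the monotonicity argument. Once this bookkeeping is anchored to the single observation that $Q<0$ makes $mQ$ and $((2m-1)Q+A)/2$ monotone decreasing, the proof reduces to a short endpoint check in (i) and a one-step propagation check in (ii).
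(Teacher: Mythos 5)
Your proposal follows essentially the same route as the paper's own proof: for (i), a verification via the monotonicity of $mQ$ on the two parity classes that Eq.(\ref{eq:sol>>cond}) forces $Y_m<0$ and $Z_m>mQ$ for all $m\geq m'$, so that Proposition \ref{prop:uniindef} reduces the simultaneous system to Eq.(\ref{eq:sol>>eqs}), which the ansatz Eq.(\ref{eq:sol01}) satisfies directly; for (ii), exactly the paper's two-step induction $Y_{m+2}=Y_m+2Q<2Q<0$ and $Z_{m+2}=Z_m>mQ>(m+2)Q$ propagating the regime forward. The argument is correct and no substantive difference from the paper's proof remains.
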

\begin{proof}
(i) If $m \geq m'$, then it follows from Eq.(\ref{eq:sol>>cond}) that the functions $Y_m$ and $Z_m$ in Eq.(\ref{eq:sol01}) satisfy $Y_m< 0$ and $Z_m > mQ$. We can confirm Eq.(\ref{eq:sol>>eqs}) directly.

(ii) It follows from the assumption that $Y_{m'+2} = Y_{m'} +2Q $ and $Z_{m'+2}= Z_{m'}$. Hence we have $ Y_{m'+2} <2Q <0$, $Z_{m'+2} >m'Q >(m'+2)Q$, Eq.(\ref{eq:sol>>eqs}) for $m=m'+2$ and $ Y_{m'+3}= Y_{m' +1} +2Q  <0$.
Similarly we have $Z_{m'+3} >(m'+3)Q$, Eq.(\ref{eq:sol>>eqs}) for $m=m'+3$ and $ Y_{m'+4}= Y_{m' +2} +2Q  <0$.
Thus we have (ii) inductively.
\end{proof}
We denote the solution in Proposition \ref{prop:sol>>} by Type ++.
Note that the solution in the form of Eq.(\ref{eq:sol01}) was essentially obtained by Murata \cite{Mu} for the case of ultradiscrete PII without parity variables.

Next we investigate solutions of p-ud PII under the condition 
\begin{equation}
Y_m>0, \;  Z_m <mQ \quad (m=m', m'-1,\dots ).
\end{equation}
Then we have 
\begin{align}
& Z_{m-1} + Z_m = Y_{m}, \; Y_{m-1} + Y_m = Z_{m-1} + A + (m-1) Q, \\
& \zeta _{m-1} \zeta _m =\eta _{m}, \; \eta _{m-1} \eta _m =\zeta _{m-1} . \nonumber
\end{align}
By setting $Z_{m'}= C+ (m'Q+A)/3 $, $Y_{m'}= -D+((2m'-1)Q+2A)/3 $, $\zeta _{m'} =\zeta $ and $\eta _{m'} =\eta $, we have 
\begin{align}
& Z_{m' -1} = Y_{m'} - Z_{m'} =  \frac{(m'-1)Q+A}{3} - C- D ,\\
& Y_{m'-1} = -Y_{m'} + Z_{m' -1} + A + (m' -1) Q = \frac{(2m'-3)Q +2A}{3} - C, \nonumber \\
& Z_{m' -2} = \frac{(m'-2)Q+A}{3} + D , \; Y_{m'-2} = \frac{(2m'-5)Q +2A}{3} + C +D ,\nonumber \\
& Z_{m' -3} = \frac{(m'-3)Q+A}{3} + C ,\; Y_{m'-3} =  \frac{(2m'-7)Q +2A}{3} -D . \nonumber 
\end{align}
Thus
\begin{equation}
Y_{m'-3} = Y_{m'} -2Q , \; Z_{m'-3}= Z_{m'} -Q ,
\end{equation}
i.e.~the solution has the 3-periodic linear structure.
By combining with the condition $Y_m > 0$ and $Z_m < mQ$ and discussion on parity variables, we have the following proposition:
\begin{prop} \label{prop:sol<<} (Type $--$)\\
(i) Let $c_m$ be a real sequence such that $c_{m-2}+c_{m-1}+c_{m}=0$ and $\eta , \zeta \in \{ \pm 1 \}$.
If $Q<0$ and the integer $m'$ satisfies
\begin{align}
& 2m'Q > Q-2A+\max (3c_{m'+1},3c_{m'}+2Q,3c_{m'-1}+4Q) , \\
& 2m'Q > 2Q +A+\max (3c_{m'-1},3c_{m'+1}+2Q,3c_{m'}+4Q) , \nonumber
\end{align}
then the functions $( \zeta _m, Z_m )$ and $( \eta _m, Y_m )$ defined by
\begin{align}
& Z_m= \frac{mQ+A}{3} +c_{m} ,\quad Y_{m}= \frac{(2m-1)Q+2A}{3} - c_{m+1}, \label{eq:sol02} \\
& \zeta _{m'-3k} =\zeta , \; \eta _{m'-3k} =\eta , \; \zeta _{m'-3k-1} = \zeta \eta , \nonumber \\ 
& \eta _{m'-3k-1} =\zeta , \; \zeta _{m'-3k-2} =\eta , \; \eta _{m'-3k-2} = \zeta \eta , \; (k \in {\mathbb Z}_{\geq 0} ) \nonumber
\end{align}
satisfy p-ud PII (Eqs.(\ref{eq:udYYZ}) and (\ref{eq:udYZZ})) for $m\leq m' -1$.\\
(ii) If $Q<0$ and there exists a integer $m'$ such that the functions $( \zeta _m, Z_m )$ and $( \eta _m, Y_m )$ satisfy p-ud PII  for $m \leq m' -1$ and $Y_m > 0$ and $Z_m < mQ$ for $m=m', m'-1$ and $m'-2$, then the solution is written as Eq.(\ref{eq:sol02}) and we have $Y_m > 0$ and $Z_m < mQ$ for $m \leq m'$.
\end{prop}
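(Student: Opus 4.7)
The plan is to parallel the proof of Proposition \ref{prop:sol>>}, this time accommodating the 3-periodic structure forced by the three-term relation $c_{m-2}+c_{m-1}+c_m=0$. Applying this relation at two consecutive indices gives $c_{m+3}=c_m$, so $\{c_m\}$ is 3-periodic; along the backward direction the sequence only cycles through three values, and each pointwise inequality I need reduces to three cases indexed by the residue class modulo $3$.

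For part (i) I would first establish the pointwise inequalities $Z_m<mQ$ (for $m\le m'-1$) and $Y_m>0$ (for $m\le m'$), which are exactly what Proposition \ref{prop:uniindef} requires in order to reduce Eqs.(\ref{eq:udYYZ}) and (\ref{eq:udYZZ}) to their simpler form. Each condition is linear in $mQ$ with an additive term in a single $c_{\ast}$; since $Q<0$ the inequality becomes easier as $m$ decreases, and by 3-periodicity the only binding cases per inequality are $m\in\{m',m'-1,m'-2\}$. A short rearrangement shows that the three binding cases for each of the two inequalities are precisely what the two assumed inequalities on $m'$ encode. With these in hand, Proposition \ref{prop:uniindef}(i),(iii) reduces the simultaneous p-ud PII at each $m\le m'-1$ to
\begin{align*}
Y_{m+1}+Y_m &= A+mQ+Z_m, & Z_{m+1}+Z_m &= Y_{m+1}, \\
\eta_{m+1}\eta_m &= \zeta_m, & \zeta_{m+1}\zeta_m &= \eta_{m+1}.
\end{align*}
Substituting (\ref{eq:sol02}) and using $c_m+c_{m+1}+c_{m+2}=0$ verifies the two amplitude identities directly, and the parity identities are checked in each of the three residue classes modulo $3$ from the explicit sign assignments in (\ref{eq:sol02}).

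For part (ii), the hypotheses $Y_m>0$ and $Z_m<mQ$ at $m=m',m'-1,m'-2$ allow me to invoke Proposition \ref{prop:uniindef} at the indices $m'-1$ and $m'-2$, yielding the same reduced amplitude equations. Defining $c_m:=Z_m-(mQ+A)/3$ for $m=m'-2,m'-1,m'$, the identity $Z_{m-1}+Z_m=Y_m$ together with the formula $Y_m=((2m-1)Q+2A)/3-c_{m+1}$ (which is itself derived from the reduced equations) forces the three-term relation $c_{m'-2}+c_{m'-1}+c_{m'}=0$, after which extending $c_m$ to all $m\le m'$ by the induced 3-periodicity yields exactly the form (\ref{eq:sol02}). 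An induction mirroring the one used for Proposition \ref{prop:sol>>}(ii) then propagates $Y_m>0$ and $Z_m<mQ$ to all $m\le m'$: decreasing $m$ by $1$ shifts $mQ$ by $-Q>0$ while $c_m$ just cycles through a fixed triple, so the strict inequalities are preserved backwards.

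The main obstacle I anticipate is the careful bookkeeping in part (i): checking that the two inequalities on $m'$ given in the statement correctly translate into the pointwise bounds on $Y_m$ and $Z_m$ for every $m\le m'$ across the three residue classes modulo $3$, and confirming that the three distinct sign patterns in (\ref{eq:sol02}) satisfy both parity equations in each residue class. The underlying algebra is mechanical, but the index shifts (in $m$, in the subscripts of $c$, and in the $k$-counter for the sign assignments) must be tracked carefully to avoid off-by-one errors.
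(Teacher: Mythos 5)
Your proposal is correct and follows essentially the same route as the paper, which disposes of this proposition by saying it is proved similarly to Proposition \ref{prop:sol>>} via the backward evolution: establish $Y_m>0$ and $Z_m<mQ$ on the relevant range (the two displayed inequalities on $m'$ encode exactly the binding cases, which by $3$-periodicity of $c_m$ and $Q<0$ occur at the largest admissible indices), invoke Proposition \ref{prop:uniindef} to reduce to the linear amplitude and parity recurrences, and verify Eq.(\ref{eq:sol02}) by direct substitution using $c_{m-2}+c_{m-1}+c_m=0$, with the converse and the backward propagation of the inequalities handled by the same induction as in Proposition \ref{prop:sol>>}(ii). The only caveat is the one you already flag: the binding indices for $Z_m<mQ$ are $m'-1,m'-2,m'-3$ (not $m',m'-1,m'-2$), consistent with the second inequality in the statement.
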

\begin{proof}
It is proved similarly to Proposition \ref{prop:sol>>} by applying the backward evolution.
\end{proof}
Note that the solution in the form of Eq.(\ref{eq:sol02}) was essentially obtained by Murata \cite{Mu}, and the condition $c_{m-2}+c_{m-1}+c_{m}=0$ for $m  \leq m'$ implies that $c_{m-3} = c_{m}$ for $m  \leq m'$.

We are going to find other patterns of solutions by modifying Proposition \ref{prop:sol<<}.
If the condition $Y_m > 0$ and $Z_m < mQ$ for $m=m', m'-1$ and $ m'-2$ is satisfied, then the solution of p-ud PII for $m \leq m'-1$ is determined as Proposition \ref{prop:sol<<}.
Therefore we impose the condition as Eq.(\ref{eq:condZ---Y-++}) or Eq.(\ref{eq:condZ+--Y+++}).
Let $K \in \mathbb{Z}_{\geq 0}$.
In the case 
\begin{align}
& Z_m<mQ, \; (m' \leq m \leq m'+3K+2), \label{eq:condZ---Y-++} \\
& Y_{m' + 3 k + 1} >0 , \; Y_{m' + 3 k + 2} >0 , \; Y _{m' + 3 k + 3} <0, \; (k=0,1,\dots ,K), \nonumber
\end{align}
we have 
\begin{align}
& Y_{m+1} + Y_m = Z_m + A + m Q , \; (m' \leq m \leq m'+3K+2), \\
& Z_{m+1} + Z_m = Y_{m+1}, \; (m=m' + 3 k + 1, \: m' + 3 k + 2, \: k=0,1,\dots ,K), \nonumber \\
& Z_{m+1} + Z_m = 0 , \; (m=m' + 3 k + 3, \: k=0,1,\dots ,K) \nonumber 
\end{align}
and the corresponding equations for the parity variables.
By considering the forward evolution from $(\eta _{m' } , Y_{m' } )= (\eta , D')  $ and $(\zeta _{m' },Z_{m' } )= (\zeta , - C' ) $, we have the following proposition;
\begin{prop} \label{prop:sol-A} (Type $-A$) \\
Let $\eta ,\zeta \in \{ \pm 1 \}$ and $C'$ and $D'$ be the constants satisfying the condition in Eq.(\ref{eq:condZ---Y-++}).
Then the functions $(\eta _{m } , Y_{m } )$ $(m'\leq m \leq m'+3K+3)$ and $(\zeta _{m },Z_{m } )$ $(m'\leq m \leq m'+3K+3)$ defined by 
\begin{align}
& (\eta _{m' + 3 k } , Y_{m' + 3 k } )= (\eta , 2 k Q + D'), \label{eq:solZ---Y-++} \\
& (\zeta _{m' + 3 k },Z_{m' + 3 k } )= (\zeta (-\eta )^{k} , - k ^2 Q - C' - k D' ), \nonumber \\
& (\eta _{m' + 3 k +1} ,Y_{m' + 3 k +1}) = (\eta \zeta (-\eta )^k , (m' - k ^2 +  k ) Q +A  - C' -(k+1)D'),  \nonumber \\
& (\zeta _{m' + 3 k +1} , Z_{m' + 3 k +1}) = (\eta , (m' +  k ) Q +A  -D'),  \nonumber \\
& (\eta _{m' + 3 k +2}  ,Y_{m' + 3 k +2} )=  (\zeta (-\eta )^k , (m' +k^2+ 3 k +1) Q  + A +C' +k D') , \nonumber \\
& (\zeta _{m' + 3 k +2} , Z_{m' + 3 k +2} )= (\eta \zeta (-\eta )^k , (k+1)^2  Q   +C' +(k+1) D') \nonumber
\end{align}
satisfy p-ud PII.
\end{prop}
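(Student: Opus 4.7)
The plan is to verify, by induction on $k \in \{0, 1, \ldots, K\}$, that the explicit formulas in Eq.~(\ref{eq:solZ---Y-++}) satisfy the simultaneous p-ud PII (Eqs.~(\ref{eq:udYYZ}) and (\ref{eq:udYZZ})) for every index $m$ in the range $m' \leq m \leq m'+3K+2$. The base case records the initial assignment $(\eta_{m'}, Y_{m'})=(\eta, D')$ and $(\zeta_{m'}, Z_{m'})=(\zeta, -C')$, and the inductive step pushes the evolution forward across the three transitions $m'+3k \to m'+3k+1 \to m'+3k+2 \to m'+3(k+1)$, producing the same functional pattern with $k$ replaced by $k+1$.

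Before any calculation, I would invoke Proposition~\ref{prop:uniindef} to simplify the equations. The hypothesis $Z_m < mQ$ in Eq.~(\ref{eq:condZ---Y-++}) rules out $(\zeta_m, Z_m)=(+1, mQ)$, while the hypotheses $Y_{m'+3k+1}>0$, $Y_{m'+3k+2}>0$, $Y_{m'+3k+3}<0$ exclude $(\eta_{m+1},Y_{m+1})=(+1,0)$ at every transition. Hence Proposition~\ref{prop:uniindef} reduces both equations to the simpler amplitude relations Eqs.~(\ref{eq:evolu2}) and (\ref{eq:evolu1}) together with the parity rules in Eq.~(\ref{eq:evolp}), and the task becomes a direct algebraic verification.

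For each of the three transitions the computation is mechanical. I would compute $Y_{m+1}+Y_m$ from the closed forms and match it against $A+2mQ-(mQ-Z_m)$, using that $Z_m<mQ$ forces $\max[mQ-Z_m,0]=mQ-Z_m$. Similarly, I would compute $Z_{m+1}+Z_m$ and match it against $\max[Y_{m+1},0]$, which equals $Y_{m+1}$ in the first two transitions and $0$ in the third. The parities then follow from Eq.~(\ref{eq:evolp}), using self-inverseness in $\{\pm 1\}$. After the three sub-steps one should recover exactly $Y_{m'+3(k+1)}=2(k+1)Q+D'$, $Z_{m'+3(k+1)}=-(k+1)^2 Q-C'-(k+1)D'$, $\eta_{m'+3(k+1)}=\eta$, and $\zeta_{m'+3(k+1)}=\zeta(-\eta)^{k+1}$, which is the formula with $k+1$ in place of $k$, closing the induction.

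The main obstacle will be the sign bookkeeping for the parity variables. The amplitude formulas are genuinely period-three in the shift of $m$, but the parity multipliers involve $(-\eta)^k$ which is period-two in $k$, so reconciling the factors of $\eta$ and $\zeta$ at each of the three sub-steps requires careful accounting; in particular, the flip $\zeta_{m'+3k+3}=-\eta\zeta_{m'+3k+2}=\zeta(-\eta)^{k+1}$ must be obtained from the $Y_{m'+3k+3}<0$ branch of Eq.~(\ref{eq:evolp}). This is routine but error-prone; apart from it, the proof involves no deeper conceptual difficulty.
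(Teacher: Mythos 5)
Your proposal is correct and takes essentially the same route as the paper: the paper also uses the condition in Eq.~(\ref{eq:condZ---Y-++}) together with Proposition~\ref{prop:uniindef} to reduce the simultaneous p-ud PII to the recurrences $Y_{m+1}+Y_m=Z_m+A+mQ$, $Z_{m+1}+Z_m=\max[Y_{m+1},0]$ and the parity rules of Eq.~(\ref{eq:evolp}), and then obtains Eq.~(\ref{eq:solZ---Y-++}) by forward evolution from $(\eta_{m'},Y_{m'})=(\eta,D')$, $(\zeta_{m'},Z_{m'})=(\zeta,-C')$, which is the same computation as your induction on $k$. Your sign bookkeeping is also consistent with the stated formulas (in particular $\zeta_{m'+3k+3}\zeta_{m'+3k+2}=-1$ from the $Y_{m'+3k+3}<0$ branch).
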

Note that the solution in Eq.(\ref{eq:solZ---Y-++}) has partly quadratic terms in the independent variable as well as the solutions in Eqs.(\ref{eq:solZ+--Y+++}), (\ref{eq:solZ-+Y--}) and (\ref{eq:solZ++Y+-}).

Next we investigate the case 
\begin{align}
& Y_m >0, \; (m' +1 \leq m \leq m'+3K+3) \label{eq:condZ+--Y+++} \\
& Z_{m' + 3 k + 1} < (m' + 3 k + 1) Q, \; Z_{m' + 3 k + 2} <(m' + 3 k + 2) Q, \nonumber \\
& Z_{m' + 3 k + 3} >(m' + 3 k + 3) Q ,  \; (k=0,1,\dots ,K). \nonumber
\end{align}
\begin{prop} \label{prop:sol-B} (Type $-B$) \\
Assume that the constants $C'$ and $D'$ satisfy the condition in Eq.(\ref{eq:condZ+--Y+++}).
Then the functions $(\eta _{m } , Y_{m } )$ $(m'+1 \leq m \leq m'+3K+4)$ and $(\zeta _{m },Z_{m } )$ $(m'\leq m \leq m'+3K+3)$ defined by 
\begin{align}
& (\zeta _{m' + 3 k },Z_{m' + 3 k } )= (\zeta , (m'+k) Q+D' ), \label{eq:solZ+--Y+++} \\
& (\eta _{m' + 3 k +1} ,Y_{m' + 3 k +1}) = (-\eta (-\zeta )^{k}  , (m'+k(k +3)) Q-k D' +C' ),  \nonumber \\
& (\zeta _{m' + 3 k +1} , Z_{m' + 3 k +1}) = (-\eta \zeta (-\zeta )^k  , k (k+2) Q -(k +1) D' +C' ),  \nonumber \\
& (\eta _{m' + 3 k +2}  ,Y_{m' + 3 k +2} )=  (\zeta  , (2 k +1) Q+ A -D') , \nonumber \\
& (\zeta _{m' + 3 k +2} , Z_{m' + 3 k +2} )= (-\eta (-\zeta )^k  , (-k^2 +1) Q+ A+k D' -C' ) , \nonumber \\
& (\eta _{m' + 3 k +3} ,Y_{m' + 3 k +3} )= (\eta (-\zeta )^{k+1} , (m' -(k+1)(k-2)) Q+ A+ (k+1) D'-C' ) \nonumber
\end{align}
satisfy p-ud PII.
\end{prop}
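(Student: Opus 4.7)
The plan is to apply Proposition \ref{prop:uniindef} to reduce the simultaneous p-ud PII to its simpler form (\ref{eq:evolu2})--(\ref{eq:evolp}) and then verify the explicit formulas in Eq.(\ref{eq:solZ+--Y+++}) by induction on $k$. The assumption Eq.(\ref{eq:condZ+--Y+++}) implies $Z_m \neq mQ$ for $m' \le m \le m'+3K+3$ and $Y_m > 0$ for $m'+1 \le m \le m'+3K+3$, so the hypotheses $(\zeta_m,Z_m)\neq(+1,mQ)$ and $(\eta_{m+1},Y_{m+1})\neq(+1,0)$ in Proposition \ref{prop:uniindef}(i) and (iii) hold throughout the relevant range. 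Consequently, Eqs.(\ref{eq:udYYZ}) and (\ref{eq:udYZZ}) reduce to (\ref{eq:evolu2}), (\ref{eq:evolu1}) together with (\ref{eq:evolp}). Moreover, the sign of $mQ - Z_m$ is constant on each of the three residue classes of $m-m'$ modulo $3$, and $Y_m > 0$ identically, so each maximum in the reduced recurrences is evaluated unambiguously to an affine expression: $\max[mQ-Z_m,0]$ is $0$ on one class and $mQ-Z_m$ on the other two.

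For the inductive step $k \to k+1$, starting from the pair $(\zeta_{m'+3k},Z_{m'+3k})$ and $(\eta_{m'+3k+1},Y_{m'+3k+1})$ given by Eq.(\ref{eq:solZ+--Y+++}), I would apply (\ref{eq:evolu1}) at $m=m'+3k$ to obtain $Z_{m'+3k+1}$, then (\ref{eq:evolu2}) at $m=m'+3k+1$ to obtain $Y_{m'+3k+2}$, and continue alternately through $Z_{m'+3k+2}$, $Y_{m'+3k+3}$, $Z_{m'+3k+3}$ and $Y_{m'+3(k+1)+1}$, checking in each case that the produced value coincides with the formula. The parity variables follow from (\ref{eq:evolp}): $\zeta_{m+1}\zeta_m = \eta_{m+1}$ (since $Y_{m+1} > 0$), while $\eta_{m+1}\eta_m$ equals $\zeta_m$ when $Z_m < mQ$ and $-1$ when $Z_m > mQ$, and this accounts for the alternating factor $(-\zeta)^k$ in the stated formulas. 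The base case $k=0$ is the same computation carried out on the initial data $(\zeta_{m'},Z_{m'})=(\zeta, m'Q+D')$ and $(\eta_{m'+1},Y_{m'+1})=(-\eta, m'Q+C')$ read off from the formula at $k=0$.

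The main obstacle is arithmetic bookkeeping rather than any conceptual issue. The quadratic-in-$k$ terms such as $k(k+2)Q$ in Eq.(\ref{eq:solZ+--Y+++}) arise by telescoping the affine recurrences over each three-step block, and the coefficients of $C'$, $D'$, $A$ and $Q$ must be tracked carefully so that the formula reproduces itself under a shift $k \to k+1$. The parity rules (\ref{eq:evolp}) switch between $\eta_{m+1}\eta_m=\zeta_m$ and $\eta_{m+1}\eta_m=-1$ depending on the residue class, so some attention is also needed to verify that the sign expressions in Eq.(\ref{eq:solZ+--Y+++}) are internally consistent; once the three residues are handled separately, the verification becomes a direct substitution.
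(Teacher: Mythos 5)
Your proposal is correct and follows essentially the same route as the paper, which states Proposition \ref{prop:sol-B} without a separate proof precisely because, as you describe, the sign conditions in Eq.(\ref{eq:condZ+--Y+++}) let Proposition \ref{prop:uniindef} reduce Eqs.(\ref{eq:udYYZ})--(\ref{eq:udYZZ}) to the affine recurrences (\ref{eq:evolu2})--(\ref{eq:evolp}), after which the formulas in Eq.(\ref{eq:solZ+--Y+++}) are checked by direct forward evolution over each three-step block (the same verification the paper carries out explicitly for Types $++$, $--$ and $-A$). The arithmetic does close up: e.g.\ at $m=m'+3k+3$ one gets $Y_{m+1}+Y_m=A+2mQ$ and $\eta_{m+1}\eta_m=-1$ because $Z_m>mQ$ there, reproducing the stated coefficients and the factor $(-\zeta)^{k+1}$.
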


We are going to find other patterns of solutions by modifying Proposition \ref{prop:sol>>}.
If the condition $Y_m < 0$ and $Z_m > mQ$ for $m=m'$ and $m'+1$ is satisfied, then the solution of p-ud PII for $m\geq m' $ is determined as Proposition \ref{prop:sol>>}.
Therefore we impose the condition as Eq.(\ref{eq:condZ-+Y--}) or Eq.(\ref{eq:condZ++Y+-}).
First we investigate the case 
\begin{align}
& Z_{m' + 2 k } < (m' + 2 k )Q , \; Z_{m' + 2 k +1} > (m' + 2 k +1)Q , \; (k=0,1,\dots ,K), \label{eq:condZ-+Y--} \\ 
& Y_m <0, \; (m' +1 \leq m \leq m'+2K+2) . \nonumber 
\end{align}
\begin{prop} \label{prop:sol+A} (Type $+A$) \\
Assume that the constants $C'$ and $D'$ satisfy the condition in Eq.(\ref{eq:condZ-+Y--}).
Then the functions $(\eta _{m } , Y_{m } )$ $(m' \leq m \leq m'+2K+2)$ and $(\zeta _{m },Z_{m } )$ $(m'\leq m \leq m'+2K+2)$ defined by 
\begin{align}
&  (\eta _{m' + 2 k } ,Y_{m'+2 k } )=( \eta (- \zeta )^{k }, k (m' +k +1) Q+k D'+C' ) , \label{eq:solZ-+Y--} \\
&  (\zeta _{m' + 2 k } ,Z_{m'+2 k } )=(\zeta , -D' ) , \nonumber  \\
&  (\eta _{m' + 2 k+1 } ,Y_{m'+2 k+1} )=( \zeta \eta (- \zeta )^k, -(k-1)(m'+k) Q +A - (k+1) D' -C' )  ,\nonumber \\
&  (\zeta _{m' + 2 k+1 } ,Z_{m'+2 k+1} )=( - \zeta ,D' ) \nonumber 
\end{align}
satisfy p-ud PII.
\end{prop}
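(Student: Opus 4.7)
The plan is to follow the same strategy used in the proofs of Propositions \ref{prop:sol>>} and \ref{prop:sol<<}: invoke Proposition \ref{prop:uniindef} to convert Eqs.(\ref{eq:udYYZ}) and (\ref{eq:udYZZ}) into scalar recurrences under the sign hypothesis Eq.(\ref{eq:condZ-+Y--}), and then verify those recurrences by substituting the explicit formulas in Eq.(\ref{eq:solZ-+Y--}). Since Eq.(\ref{eq:condZ-+Y--}) already forces $(\zeta_m,Z_m)\neq(+1,mQ)$ and $(\eta_{m+1},Y_{m+1})\neq(+1,0)$ throughout the relevant range $m'\leq m\leq m'+2K+2$, the unique-evolution clauses of Proposition \ref{prop:uniindef} apply at every step.

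Concretely, the assumption $Y_{m+1}<0$ reduces part (iii) of Proposition \ref{prop:uniindef} to $Z_m+Z_{m+1}=0$ and $\zeta_{m+1}\zeta_m=-1$. For even steps $m=m'+2k$ where $Z_m<mQ$, part (i) yields $Y_{m+1}+Y_m=A+mQ+Z_m$ and $\eta_{m+1}\eta_m=\zeta_m$; for odd steps $m=m'+2k+1$ where $Z_m>mQ$, part (i) yields $Y_{m+1}+Y_m=A+2mQ$ and $\eta_{m+1}\eta_m=-1$. The $Z$-equations hold by inspection since $Z_{m'+2k}=-D'$ and $Z_{m'+2k+1}=D'$. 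A direct polynomial computation shows $Y_{m'+2k}+Y_{m'+2k+1}=(m'+2k)Q+A-D'$, matching $A+(m'+2k)Q+Z_{m'+2k}$, and $Y_{m'+2k+1}+Y_{m'+2k+2}=2(m'+2k+1)Q+A$, as required. The parity identities follow from the fact that $\zeta_{m'+2k}=\zeta$ and $\zeta_{m'+2k+1}=-\zeta$ give $\zeta_{m+1}\zeta_m=-1$, while the alternation $\eta_{m'+2k}=\eta(-\zeta)^k$, $\eta_{m'+2k+1}=\zeta\eta(-\zeta)^k$ produces $\eta_{m+1}\eta_m=\zeta$ (matching $\zeta_{m'+2k}$) at even steps and $\eta_{m+1}\eta_m=-1$ at odd steps after the $(-\zeta)^{2k}$ factors cancel.

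Once the reduction via Proposition \ref{prop:uniindef} is in place, the proof is essentially algebraic verification rather than a substantive difficulty; in particular, no induction is needed because the formulas in Eq.(\ref{eq:solZ-+Y--}) are already given in closed form. The only point requiring care is the bookkeeping of the two distinct reduced recurrences at adjacent even and odd indices: the quadratic term $k(m'+k+1)Q$ in $Y_{m'+2k}$ must combine with $-(k-1)(m'+k)Q$ in $Y_{m'+2k+1}$ to produce a linear expression matching the even-step rule, while the same terms combined with $(k+1)(m'+k+2)Q$ in $Y_{m'+2k+2}$ must instead match the odd-step rule. Verifying these cancellations, together with the analogous parity matchings, constitutes the entire content of the proof.
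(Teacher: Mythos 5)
Your proposal is correct and follows exactly the route the paper intends: the paper states Proposition \ref{prop:sol+A} without an explicit proof, relying on the reduction of Eqs.(\ref{eq:udYYZ})--(\ref{eq:udYZZ}) to Eqs.(\ref{eq:evolu2})--(\ref{eq:evolp}) via Proposition \ref{prop:uniindef} under the sign conditions of Eq.(\ref{eq:condZ-+Y--}), followed by the algebraic check of the closed forms, which is precisely what you carry out (and your computations of $Y_{m'+2k}+Y_{m'+2k+1}$, $Y_{m'+2k+1}+Y_{m'+2k+2}$ and the parity products are all accurate).
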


Next we investigate the case 
\begin{align}
& Z_m > mQ, \; (m' \leq m \leq m'+2K+1), \label{eq:condZ++Y+-} \\
&  Y_{m' + 2 k + 1} <0 , \; Y _{m' + 2 k + 2} >0 , \; (k=0,1,\dots ,K). \nonumber
\end{align}
\begin{prop} \label{prop:sol+B} (Type $+B$) \\
Assume that the constants $C'$ and $D'$ satisfy the condition in Eq.(\ref{eq:condZ++Y+-}).
Then the functions $(\eta _{m } , Y_{m } )$ $(m' \leq m \leq m'+2K+2)$ and $(\zeta _{m },Z_{m } )$ $(m'\leq m \leq m'+2K+2)$ defined by 
\begin{align}
&  (\eta _{m' + 2 k } ,Y_{m'+2 k } )=( \eta ,2 k Q + D'), \label{eq:solZ++Y+-} \\
&  (\zeta _{m' + 2 k } ,Z_{m'+2 k } )=(\zeta (-\eta )^{k} , -(k-1)k Q + C' - k D' ) , \nonumber  \\
&  (\eta _{m' + 2 k+1 } ,Y_{m'+2 k+1} )=( -\eta , 2 (m' + k ) Q +A  - D' )  ,\nonumber \\
&  (\zeta _{m' + 2 k+1 } ,Z_{m'+2 k+1} )=( -\zeta (-\eta )^k ,k (k +1) Q -C' +(k+1) D' )  \nonumber
\end{align}
satisfy p-ud PII.
\end{prop}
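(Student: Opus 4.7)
The plan is to reduce Proposition~\ref{prop:sol+B} to the simplified evolution of Proposition~\ref{prop:uniindef} and then verify the closed forms by direct substitution, following the same template as Propositions~\ref{prop:sol>>}--\ref{prop:sol+A}. Under Eq.~(\ref{eq:condZ++Y+-}) we have $Z_m > mQ$ for every $m$ in the range, so $(\zeta_m, Z_m)\neq(+1,mQ)$; the strict sign requirements on $Y_{m'+2k+1}$ and $Y_{m'+2k+2}$ likewise exclude $(\eta_{m+1},Y_{m+1})=(+1,0)$ at every relevant step. Consequently Proposition~\ref{prop:uniindef} applies throughout, and Eqs.~(\ref{eq:udYYZ})--(\ref{eq:udYZZ}) are equivalent on the range to the simplified system of Eqs.~(\ref{eq:evolu2}), (\ref{eq:evolu1}) and (\ref{eq:evolp}), so it suffices to verify those three.

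I would then split the verification into two cases by the parity of $m-m'$. For $m=m'+2k$ with $0\le k\le K$, the assumption $Y_{m+1}<0$ together with $Z_m>mQ$ collapses the simplified equations to
\[
Y_{m+1}+Y_m = A+2mQ, \quad Z_{m+1}+Z_m = 0, \quad \eta_{m+1}\eta_m=-1, \quad \zeta_{m+1}\zeta_m=-1.
\]
For $m=m'+2k+1$ with $0\le k\le K$, the assumption $Y_{m+1}>0$ instead yields
\[
Y_{m+1}+Y_m = A+2mQ, \quad Z_{m+1}+Z_m = Y_{m+1}, \quad \eta_{m+1}\eta_m=-1, \quad \zeta_{m+1}\zeta_m=\eta_{m+1}.
\]
The amplitude identities are then checked by plugging in the closed forms from Eq.~(\ref{eq:solZ++Y+-}); the $Y$-sum telescopes to $A+2mQ$ in both cases (e.g.\ in Case~A, $(2kQ+D')+(2(m'+k)Q+A-D')=A+2(m'+2k)Q$), and the $Z$-identities reduce to cancellations of the quadratic-in-$k$ contributions against the $D'$-terms. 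The parity products $\eta\cdot(-\eta)=-1$, $(\zeta(-\eta)^k)(-\zeta(-\eta)^k)=-1$, and $(\zeta(-\eta)^{k+1})(-\zeta(-\eta)^k)=\eta$ are immediate from $\zeta^2=\eta^2=1$ and $(-\eta)^{2k}=1$.

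The only real bookkeeping obstacle is keeping the alternating parity factor $(-\eta)^k$ synchronized with the two different sign rules in Cases A and B as $k$ advances. The cleanest way to handle this is an induction on $k$: assuming the closed forms hold for indices $\le m'+2k$, Corollary~\ref{cor:uniindef} uniquely produces the index-$(m'+2k+1)$ and then index-$(m'+2k+2)$ quadruples $(\eta_\bullet, Y_\bullet, \zeta_\bullet, Z_\bullet)$ by forward evolution, and one compares these with the claimed expressions. Since the structure of Eq.~(\ref{eq:solZ++Y+-}) was designed precisely by iterating the rules in Eqs.~(\ref{eq:evolu2})--(\ref{eq:evolp}), the inductive step reduces to trivial arithmetic and mirrors verbatim the verifications behind Propositions~\ref{prop:sol-A}--\ref{prop:sol+A}.
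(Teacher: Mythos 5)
Your overall strategy is exactly the one the paper (implicitly) uses for all of Propositions~\ref{prop:sol>>}--\ref{prop:sol+B}: reduce to Eqs.~(\ref{eq:evolu2})--(\ref{eq:evolp}) via Proposition~\ref{prop:uniindef}, split by the parity of $m-m'$, and check the closed forms. Your case identification is also right: for $m=m'+2k$ the reduced system forces $Z_{m'+2k}+Z_{m'+2k+1}=\max[Y_{m'+2k+1},0]=0$, and for $m=m'+2k+1$ it forces $Z_{m'+2k+1}+Z_{m'+2k+2}=Y_{m'+2k+2}$. The $Y$-identities and the parity products do check out as you say.

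The gap is in the sentence asserting that ``the $Z$-identities reduce to cancellations of the quadratic-in-$k$ contributions against the $D'$-terms.'' If you actually substitute the printed formulas of Eq.~(\ref{eq:solZ++Y+-}) you get
\[
Z_{m'+2k}+Z_{m'+2k+1}=\bigl[-(k-1)k+k(k+1)\bigr]Q+\bigl[-k+(k+1)\bigr]D'=2kQ+D',
\]
which should equal $0$ (it equals $Y_{m'+2k}$ instead, and already at $k=0$ gives $D'\neq 0$), while
\[
Z_{m'+2k+1}+Z_{m'+2k+2}=0,
\]
which should equal $Y_{m'+2k+2}=2(k+1)Q+D'>0$. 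So the claimed verification fails for the $Z$-amplitudes as printed; contrast this with Type $-A$, where the analogous substitution really does close (e.g.\ $Z_{m'+3k}+Z_{m'+3k+1}=(m'-k^2+k)Q+A-C'-(k+1)D'=Y_{m'+3k+1}$). Your induction via Corollary~\ref{cor:uniindef} is the right tool, but carrying it out from $(\zeta_{m'},Z_{m'})=(\zeta,C')$, $(\eta_{m'},Y_{m'})=(\eta,D')$ produces $Z_{m'+2k}=k(k+1)Q+C'+kD'$ and $Z_{m'+2k+1}=-k(k+1)Q-C'-kD'$, not the displayed expressions; the statement appears to contain a misprint in the $Z$-entries (the sign patterns $\zeta_{m'+2k}=\zeta(-\eta)^k$, $\zeta_{m'+2k+1}=-\zeta(-\eta)^k$ and both $Y$-entries do agree with the evolution). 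A complete proof must either correct the amplitude formulas or explain the discrepancy; asserting that the arithmetic is trivial and mirrors the other types hides exactly the point where it does not.
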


\section{Perturbed solutions} \label{sec:perturb}

We investigate solutions of p-ud PII which are obtained by perturbing the initial value to the function ($(\eta^{(N)}(m) , Y^{(N)}(m))$, $(\zeta^{(N)}(m) , Z^{(N)}(m))$) or ($(\eta^{(M)}(m) , Y^{(M)}(m))$, $(\zeta^{(M)}(m) , Z^{(M)}(m))$) in section \ref{sec:etaYIIT}.

Let us recall the situation that the function $(\zeta^{(N)}(m) , Z^{(N)}(m))$ in Eqs.(\ref{eq:IITNsolm<<})--(\ref{eq:IITNsolm>>}) was described.
Assume that the constant $A$ in p-ud PII is written as $A=(2N+1)Q$ ($N \in {\mathbb Z}_{\geq 1}$), $k_0 \in \{ 0, 1 , \cdots , N \}$, $\chi \in \{ +1, -1\}$ and the value $m_0 \in {\mathbb Z}_{<0}$ satisfies Eq.(\ref{eq:m0N}).
Let $\varepsilon $ be a real number such that $|\varepsilon |$ is sufficiently small.
By setting $A=(2N+1)Q$, $m'= m_0 -2N$, $k=j$, $C' = C $, $D'=\varepsilon $, $\eta =+1$ and $\zeta =\chi $ in Eq.(\ref{eq:solZ---Y-++}), we have
\begin{align}
& (\eta _{m_0 -2N + 3 j } , Y_{m_0 -2N + 3 j } )= (+1 , 2 j Q + \varepsilon ), \label{eq:YZNep1} \\
& (\zeta _{m_0 -2N + 3 j },Z_{m_0 -2N + 3 j  } )= ((-1)^j \chi , - j ^2 Q - C - j \varepsilon  ), \nonumber \\
& (\eta _{m_0 -2N + 3 j +1} ,Y_{m_0 -2N + 3 j +1}) = ((-1)^j \chi , (m_0 - j ^2 +  j +1) Q - C -(j+1)\varepsilon  ),  \nonumber \\
& (\zeta _{m_0 -2N + 3 j +1} , Z_{m_0 -2N + 3 j +1}) = ( +1 ,(m_0 + j +1) Q -\varepsilon  ),  \nonumber \\
& (\eta _{m_0 -2N + 3 j +2}  ,Y_{m_0 -2N + 3 j +2} )=  ((-1)^j \chi , (m_0 +j^2+ 3 j +2) Q  +C +j \varepsilon ), \nonumber \\
& (\zeta _{m_0 -2N + 3 j +2} , Z_{m_0 -2N + 3 j +2} )= ((-1)^j \chi ,(j+1)^2  Q   +C +(j+1) \varepsilon  ) . \nonumber 
\end{align}
We discuss the case $k_0=0$ with the condition $-m_0 Q - N(N-1)Q < C < (m_0 +1) Q $ and $|\varepsilon | \ll 1 $.
Then the function in Eq.(\ref{eq:YZNep1}) satisfies Eq.(\ref{eq:condZ---Y-++}) for $m'=m_0-2N$ and $K=N-1$, $Z_m<mQ$ $(m=m_0+N)$ and $Y_{m} >0$ $(m=m_0+N+1)$.
Hence it follows from Proposition \ref{prop:sol-A} that the function $(\eta _{m} , Y_{m} )$ for $m_0 -2N  \leq m \leq m_0 +N +1$ and  $(\zeta _{m} , Z_{m} )$ for $m_0 -2N  \leq m \leq m_0 +N +1$ in Eq.(\ref{eq:YZNep1}) satisfies p-ud PII.
If $\varepsilon =0$, then the function defined by Eq.(\ref{eq:YZNep1}) coincides with the solution ($(\eta^{(N)}(m) , Y^{(N)}(m))$, $(\zeta^{(N)}(m) , Z^{(N)}(m))$) in section \ref{sec:etaYIIT}, which has the indefinite evolution caused by $(\eta _{m_0 -2N } , Y_{m_0 -2N} )= (+1 , 0 )$ and $(\zeta _{m_0+N+1} , Z_{m_0 +N +1} ) =(+1, (m_0+N +1)Q)$.
By imposing $\varepsilon \neq 0$, the indefiniteness of the evolution may disappear.
In short, we obtained the solution of Type $-A$ for $m_0 -2N  \leq m \leq m_0 +N +1$.

We now investigate the backward evolution for $m \leq m_0-2N$.
Recall that 
\begin{align}
& (\zeta _{m_0 -2N },Z_{m_0 -2N} )= ( \chi , -C ),  \\  
& (\eta _{m_0 -2N } , Y_{m_0 -2N} )= (+1 , \varepsilon  ). \nonumber
\end{align}
If $\varepsilon <0$, it follows from $Y_{m_0 -2N } = \varepsilon < 0 $ that
\begin{align}
& (\zeta _{m_0 -2N -1} , Z_{m_0 -2N -1} )= ( -\chi , C ), \\
& (\eta _{m_0 -2N -1}  ,Y_{m_0 -2N -1} )=  ( -\chi  , m_0 Q  +C - \varepsilon  ), \nonumber \\
& (\zeta _{m_0 -2N -2} , Z_{m_0 -2N -2}) = ( +1 , m_0  Q -\varepsilon  ),  \nonumber \\
& (\eta _{m_0 -2N -2} ,Y_{m_0 -2N -2}) = (-\chi , (m_0 -1) Q - C ),  \nonumber \\
& (\zeta _{m_0 -2N -3 },Z_{m_0 -2N -3} )= (-\chi , - Q - C  + \varepsilon  ), \nonumber \\
& (\eta _{m_0 -2N -3 } , Y_{m_0 -2N -3 } )= ( +1 , -2 Q + \varepsilon  ), \nonumber
\end{align}
and the condition $Z_{m} <mQ$ and $Y_m >0$ is satisfied for $m=m_0 -2N -1, m_0 -2N -2$ and $ m_0 -2N -3$.
Then we can use Proposition \ref{prop:sol<<} (ii), and the solution for $m\leq m_0 -2N -1 $ is expressed in the form of Proposition \ref{prop:sol<<} (i) by setting $c_{m_0} =-C+ \varepsilon -(m_0+1) Q/3$, $c_{m_0 +1} =- \varepsilon +(2m_0+1) Q/3$ and $c_{m_0 +2} = C -m_0 Q/3$.
Namely if $j<0$, then we have
\begin{align}
& (\zeta _{m_0 -2N + 3 j +2} , Z_{m_0 -2N + 3 j +2} )= ( -\chi ,(j +1) Q +C   ) , \label{eq:IITepsiN1<<} \\. \nonumber 
& (\eta _{m_0 -2N + 3 j +2}  ,Y_{m_0 -2N + 3 j +2} )=  ( -\chi  , (m_0 +2j +2) Q  +C - \varepsilon  ), \nonumber \\
& (\zeta _{m_0 -2N + 3 j +1} , Z_{m_0 -2N + 3 j +1}) = ( +1 ,(m_0 +j+1) Q -\varepsilon  ),  \nonumber \\
& (\eta _{m_0 -2N + 3 j +1} ,Y_{m_0 -2N + 3 j +1}) = (-\chi , (m_0 +2j +1) Q - C ),  \nonumber \\
& (\zeta _{m_0 -2N + 3 j },Z_{m_0 -2N + 3 j  } )= (-\chi , j Q - C  + \varepsilon  ), \nonumber \\
& (\eta _{m_0 -2N + 3 j } , Y_{m_0 -2N + 3 j } )= ( +1 , 2j Q + \varepsilon  ) . \nonumber 
\end{align}
If $\varepsilon >0$, then we have $Y_{m_0 -2N } = \varepsilon >0 $ and the condition $Z_{m} <mQ$ and $Y_m >0$ is satisfied for $m=m_0 -2N +2, m_0 -2N +1$ and $ m_0 -2N$.
Then we can use Proposition \ref{prop:sol<<} (ii) and the solution for $m\leq m_0 -2N +2 $ is expressed in the form of Proposition \ref{prop:sol<<} (i) by setting $c_{m_0} =-C -(m_0+1) Q/3$, $c_{m_0 +1} =- \varepsilon +(2m_0+1) Q/3$ and $c_{m_0 +2} = \varepsilon + C -m_0 Q/3$.
Namely if $j\leq 0$, then 
\begin{align}
& (\zeta _{m_0 -2N + 3 j +2} , Z_{m_0 -2N + 3 j +2} )= ( \chi ,(j +1) Q +C + \varepsilon  ) , \label{eq:IITepsiN2<<} \\
& (\eta _{m_0 -2N + 3 j +2}  ,Y_{m_0 -2N + 3 j +2} )=  ( \chi  , (m_0 +2j +2) Q  +C  ), \nonumber \\
& (\zeta _{m_0 -2N + 3 j +1} , Z_{m_0 -2N + 3 j +1}) = ( +1 ,(m_0 +j+1) Q -\varepsilon  ),  \nonumber \\
& (\eta _{m_0 -2N + 3 j +1} ,Y_{m_0 -2N + 3 j +1}) = (\chi , (m_0 +2j +1) Q - C -\varepsilon ),  \nonumber \\
& (\zeta _{m_0 -2N + 3 j },Z_{m_0 -2N + 3 j  } )= (\chi , j Q - C  ), \nonumber \\
& (\eta _{m_0 -2N + 3 j } , Y_{m_0 -2N + 3 j } )= ( +1 , 2j Q + \varepsilon  ) , \nonumber 
\end{align}
whose amplitude functions are slightly different from Eq.(\ref{eq:IITepsiN1<<}).
In either case, we obtained the solution of Type $--$.
Recall that the Airy-type solution for $m \leq m_0 - 2N -1 $ is written as Eq.(\ref{eq:IITNsolm<<}), i.e.
\begin{equation}
(\zeta^{(N)}(m) , Z^{(N)}(m))=(+1 , (-m-2N-1)Q),
\end{equation}
and completely different from the solution in Eq.(\ref{eq:IITepsiN1<<}) or Eq.(\ref{eq:IITepsiN2<<}), whose initial value is perturbed.

We now investigate the forward evolution for $m \geq m_0+N+1$.
Recall that
\begin{align}
& (\eta _{m_0 + N +1} ,Y_{m_0 + N +1}) = ((-1)^N \chi , (m_0 - N ^2 + N +1) Q - C -(N+1)\varepsilon  ), \nonumber \\
& (\zeta _{m_0 +N +1} , Z_{m_0 + N +1}) = ( +1 ,(m_0 + N +1) Q -\varepsilon  ). 
\end{align}
If $\varepsilon >0$, then $Z_{m_0 +N +1} < (m_0 + N +1) Q$ and we have 
\begin{align}
& (\eta _{m_0 +N +2}  ,Y_{m_0 +N +2} )=  ((-1)^N \chi , (m_0 +N^2+ 3 N +2) Q  +C +N \varepsilon ), \nonumber \\
& (\zeta _{m_0 +N +2} , Z_{m_0 +N +2} ) \\
& = \left\{ 
\begin{array}{ll}
((-1)^N \chi ,(N+1)^2  Q   +C +(N+1) \varepsilon  ) ,&  (Y_{m_0 +N +2}>0 )\\
(-1 , -(m_0 + N +1) Q +\varepsilon ), & (Y_{m_0 +N +2}<0) .
\end{array}
\right.
 \nonumber 
\end{align}
If $\varepsilon <0$, then $Z _{m_0 +N +1} > (m_0 + N +1) Q$ and we have 
\begin{align}
& (\eta _{m_0 +N +2}  ,Y_{m_0 +N +2} )=  ((-1)^{N+1} \chi , (m_0 +N^2+ 3 N +2) Q  +C +(N+1) \varepsilon ), \nonumber \\
& (\zeta _{m_0 +N +2} , Z_{m_0 +N +2} ) \\
& = \left\{ 
\begin{array}{ll}
((-1)^{N+1} \chi ,(N+1)^2  Q   +C +(N+2) \varepsilon  ) ,&  (Y_{m_0 +N +2}>0) \\
(-1 , -(m_0 + N +1) Q +\varepsilon ), & (Y_{m_0 +N +2}<0) .
\end{array}
\right. \nonumber 
\end{align}
Therefore they are completely different from the Airy-type solution for $m_0 +N +2\leq m \leq -1 $ written as
\begin{equation}
(\zeta^{(N)}(m) , Z^{(N)}(m))=(+1 , mQ).
\end{equation}

We discuss the case $k_0 \neq 0$ with the condition $-m_0 Q - (N-k_0 )(N-k_0 +1)Q  < C < -m_0 Q- (N-k_0 +1)(N-k_0 +2)Q (< (m_0+1)Q)$.
Then the function in Eq.(\ref{eq:YZNep1}) satisfies Eq.(\ref{eq:condZ---Y-++}) for $m'=m_0-2N$ and $K=N-k_0-1$, $Z_m<mQ$ ($m=m_0 +N-3k_0 $ and $m_0 +N-3k_0 +1 $) and $Y_{m} >0$ $(m=m_0+N-3k_0 +1)$.
Hence it follows from Proposition \ref{prop:sol-A} that the function $(\eta _{m} , Y_{m} )$ for $m_0 -2N  \leq m \leq m_0 +N - 3 k_0 +2$ and  $(\zeta _{m} , Z_{m} )$  for $m_0 -2N  \leq m \leq m_0 +N - 3 k_0 +1$ in Eq.(\ref{eq:YZNep1}) satisfies p-ud PII.
However we have  
\begin{align}
& (\eta _{m_0 +N - 3 k_0 +2}  ,Y_{m_0 +N - 3 k_0 +2} )= \\
& \qquad \qquad ((-1)^{N-k_0} \chi , (m_0 +(N-k_0+1)(N-k_0 +2)) Q +C +(N-k_0) \varepsilon ), \nonumber
\end{align}
and $Y_{m_0 +N - 3 k_0 +2} <0 $, namely the condition is changed.
By applying the forward evolution of p-ud PII, we have
\begin{align}
& (\zeta _{m_0 +N - 3 k_0 +2}, Z_{m_0 +N - 3 k_0 +2} )= (-1 , -(m_0 + N -k_0 +1) Q + \varepsilon   ), \label{eq:YZNep1p} \\
& (\eta _{m_0 +N - 3 k_0 +3}  ,Y_{m_0 +N - 3 k_0 +3} )= \nonumber\\
& \qquad ((-1)^{N-k_0+1} \chi , -(m_0 +(N-k_0+1)(N-k_0 )) Q -C -(N-k_0-1) \varepsilon), \nonumber
\end{align}
with the condition $ Z_{m_0 +N - 3 k_0 +2} < (m_0 +N - 3 k_0 +2 )Q $ and $Y_{m_0 +N - 3 k_0 +3}  <0 $. Then
\begin{align}
& (\zeta _{m_0 +N - 3 k_0 +3}, Z_{m_0 +N - 3 k_0 +3} )= (+1 , (m_0 + N -k_0 +1) Q - \varepsilon   ).  
\end{align}
In the case $k_0 >1 $, we apply further evolution.
Set $D'=\varepsilon $, $A=(2N+1)Q$, $m'= m_0 -2N -1$, $k=j$, $C' = 2 m_0 Q +C -3 \varepsilon $,  $\eta =+1$ and $\zeta =\chi $ in Eq.(\ref{eq:solZ---Y-++}).
Then we have
\begin{align}
& (\eta _{m_0 -2N + 3 j } , Y_{m_0 -2N + 3 j } )= ((-1)^j \chi  , (- m_0 - j( j-1) ) Q  - C -(j-2) \varepsilon  ), \nonumber \\
& (\zeta _{m_0 -2N + 3 j },Z_{m_0 -2N + 3 j  } )= (+1 , (m_0 +  j ) Q -\varepsilon  ), \label{eq:YZNep2} \\
& (\eta _{m_0 -2N + 3 j +1} ,Y_{m_0 -2N + 3 j +1}) = ((-1)^j \chi , (3 m_0 +j^2+ 3 j +1) Q +C +(j -3) \varepsilon  ),  \nonumber \\
& (\zeta _{m_0 -2N + 3 j +1} , Z_{m_0 -2N + 3 j +1}) = ( -1)^j \chi ,( 2 m_0 + (j+1)^2  )Q +C +(j-2) \varepsilon  ),  \nonumber \\
& (\eta _{m_0 -2N + 3 j +2}  ,Y_{m_0 -2N + 3 j +2} )=  ( +1 , (2 j +2) Q + \varepsilon  ), \nonumber \\
& (\zeta _{m_0 -2N + 3 j +2} , Z_{m_0 -2N + 3 j +2} )= ((-1)^j \chi ,-(2 m_0 + (j+1)^2) Q - C -(j-2)  \varepsilon  ) , \nonumber 
\end{align}
and the condition $Z_m<mQ$, $Y_{m_0 -2N + 3 j +1} >0$, $Y_{m_0 -2N + 3 j +2} <0$ and $Y_{m_0 -2N + 3 j+3 } >0$ is satisfied for $m_0 +N - 3 k_0 +3 \leq m \leq m_0 +N -1$ and $N-k_0+1  \leq j \leq N-1 $.
It is shown by the argument similar to the proof of Proposition \ref{prop:sol-A} that the functions $(\eta _{m} , Y_{m} )$ for $m_0 +N- 3 k_0 +3 \leq m \leq m_0 +N $ and  $(\zeta _{m} , Z_{m} )$  for $m_0 +N- 3 k_0 +3 \leq m \leq m_0 +N $ in Eq.(\ref{eq:YZNep2}) satisfy p-ud PII.
If $\varepsilon =0$, then the function defined by Eqs.(\ref{eq:YZNep1}), (\ref{eq:YZNep1p}) and (\ref{eq:YZNep2}) coincides with the solution in section \ref{sec:etaYIIT}, which has the indefinite evolution caused by $(\eta _{m_0 -2N } , Y_{m_0 -2N} )= (+1 , 0 )$ and $(\zeta _{m_0+N} , Z_{m_0 +N} ) =(+1, (m_0+N)Q)$.
By imposing $\varepsilon \neq 0$, the indefiniteness of the evolution may disappear.
In the case $k_0 > 0$, we obtained the two jointed solution of type $-A$ for $m_0 -2N \leq m \leq m_0 +N $.

The solution for $m \leq m_0-2N-1$ is obtained exactly the same as Eq.(\ref{eq:IITepsiN1<<}) or Eq.(\ref{eq:IITepsiN2<<}) which is of Type $--$, and it is different from the Airy-type solution.
The solution for $m_0 +N+1 \leq m $ is also different from the Airy-type solution, although we need case classification to express the solution.

We give an example which is related with the solution in Eqs.(\ref{eq:z3}) and (\ref{eq:y3}).
Set $Q=-3$ and $N=3$ $(A=7Q)$ and choose the initial value as $( \eta_{-18} , Y_{-18} )= (+1,0- \varepsilon)$ and $(\zeta _{-18}, Z_{-18} )= (+1,29)$ $(0<4 \varepsilon <1 )$. Then the solution of p-ud PII is written as follows:
\begin{align}
( \eta_ m , Y_m ), \: (\zeta _m, Z_m)= \left\{
\begin{array}{lll}
(+1,6 -\varepsilon), & ( -1,32 -\varepsilon) & (m= -21) \\
(-1,68), & ( +1,36 +\varepsilon) & (m= -20) \\
(-1,7+\varepsilon), & ( -1,-29) & (m= -19) \\
(+1,0-\varepsilon), & (+1,29) & (m=-18) \\
(+1,62+2\varepsilon), & (+1,33+\varepsilon) & (m=-17) \\
(+1,1), & (+1,-32-\varepsilon) &  (m=-16) \\
(+1,-6-\varepsilon), & (-1,32+\varepsilon) & (m=-15)\\
(-1,62-2\varepsilon), & (+1,30+\varepsilon) & (m=-14) \\
(-1,-11-\varepsilon), & (-1,-30-\varepsilon) & (m=-13) \\
(+1,-1), & (+1,30+\varepsilon) & (m=-12) \\
(+1,46+\varepsilon), & (+1,16) & (m=-11) \\
(+1,-18-\varepsilon), & (-1,-16) & (m=-10) \\
(-1,11+\varepsilon), & ( +1, 27+\varepsilon) & (m=-9 ) \\
(+1,22-\varepsilon), & ( +1, -5-2\varepsilon) & (m=-8 ) \\
(+1,-24-\varepsilon), & ( -1, 5+2\varepsilon) & (m=-7 ) \\
(-1,29+3\varepsilon), & ( +1, 24+\varepsilon) & (m=-6 ) \\
(+1,-14-3\varepsilon), & ( -1, -24-\varepsilon) & (m=-5 ) \\
(-1,-16+2\varepsilon), & ( +1, 24+\varepsilon) & (m=-4 ) \\
(+1,19-2\varepsilon), & ( +1, -5-3\varepsilon) & (m=-3 ) \\
(+1,-36-\varepsilon), & ( -1, 5+3\varepsilon) & (m=-2 ) \\
(-1,26+4\varepsilon), & ( +1, 21+\varepsilon) & (m=-1 ) \\
(+1,-41-4\varepsilon), & ( -1, -21-\varepsilon) & (m=0 ) \\
(-1,-1+3\varepsilon), & ( +1, 21+\varepsilon) & (m=1 ) \\
(+1,-26-3\varepsilon), & ( -1, -21-\varepsilon) & (m=2 ) \\
(-1,-21+2\varepsilon), & ( +1, 21+\varepsilon) & (m=3 ) \\
(+1,-17-2\varepsilon), & ( -1, -21-\varepsilon) & (m=4 ) \\
(-1,-37+\varepsilon), & ( +1, 21+\varepsilon) & (m=5 ) \\
(+1,-14-\varepsilon), & ( -1, -21-\varepsilon) & (m=6 ) \\
(-1,-46), & ( +1, 21+\varepsilon) & (m=7) \\
(+1,-17), & ( -1, -21-\varepsilon) & (m=8 ). 
\end{array}
\right.
\label{eq:pertex1}
\end{align}
The graph of the solution is written as Figure 3, where $\bullet $ (resp. $\circ $) represents the amplitute with $\zeta _m =+1 $ (resp. $\zeta _m =-1 $).
\begin{figure}
\begin{picture}(200,250)(-50,0)
\put(20,92){\circle*{3}}
\put(25,27){\circle{3}}
\put(30,85){\circle{3}}
\put(35,89){\circle*{3}}
\put(40,24){\circle{3}}
\put(45,82){\circle{3}}
\put(50,86){\circle*{3}}
\put(55,21){\circle{3}}
\put(60,79){\circle*{3}}
\put(65,83){\circle*{3}}
\put(70,18){\circle*{3}}
\put(75,82){\circle{3}}
\put(80,80){\circle*{3}}
\put(85,20){\circle{3}}
\put(90,80){\circle*{3}}
\put(95,66){\circle*{3}}
\put(100,34){\circle{3}}
\put(105,77){\circle*{3}}
\put(110,45){\circle*{3}}
\put(115,55){\circle{3}}
\put(120,74){\circle*{3}}
\put(125,26){\circle{3}}
\put(130,74){\circle*{3}}
\put(135,45){\circle*{3}}
\put(140,55){\circle{3}}
\put(145,71){\circle*{3}}
\put(150,29){\circle{3}}
\put(155,71){\circle*{3}}
\put(160,29){\circle{3}}
\put(165,71){\circle*{3}}
\put(170,29){\circle{3}}
\put(175,71){\circle*{3}}
\put(180,29){\circle{3}}
\put(185,71){\circle*{3}}
\put(190,29){\circle{3}}
\put(195,71){\circle*{3}}
\put(200,29){\circle{3}}
\put(205,71){\circle*{3}}
\put(210,29){\circle{3}}
\put(25,50){\vector(1,0){180}}
\put(150,15){\vector(0,1){80}}
\put(100,15){\line(0,1){75}}
\put(50,15){\line(0,1){75}}
\put(152,90){{\footnotesize $Z_m$}}
\put(200,43){{\footnotesize $m$}}
\put(142,43){{\scriptsize $O$}}
\put(84,44){{\scriptsize $-10$}}
\put(34,44){{\scriptsize $-20$}}
\put(149,80){\line(1,0){2}}
\put(152,78){{\scriptsize $30$}}
\put(149,20){\line(1,0){2}}
\put(151,17){{\scriptsize $-30$}}
\put(25,180){\vector(1,0){180}}
\put(150,110){\vector(0,1){145}}
\put(100,110){\line(0,1){145}}
\put(50,110){\line(0,1){145}}
\put(152,250){{\footnotesize $Y_m$}}
\put(200,173){{\footnotesize $m$}}
\put(142,173){{\scriptsize $O$}}
\put(84,174){{\scriptsize $-10$}}
\put(34,174){{\scriptsize $-20$}}
\put(149,210){\line(1,0){2}}
\put(152,208){{\scriptsize $30$}}
\put(149,240){\line(1,0){2}}
\put(152,238){{\scriptsize $60$}}
\put(149,150){\line(1,0){2}}
\put(151,147){{\scriptsize $-30$}}
\put(149,120){\line(1,0){2}}
\put(151,117){{\scriptsize $-60$}}
\put(20,260){\circle{3}}
\put(25,199){\circle{3}}
\put(30,192){\circle*{3}}
\put(35,254){\circle{3}}
\put(40,193){\circle{3}}
\put(45,186){\circle*{3}}
\put(50,248){\circle{3}}
\put(55,187){\circle{3}}
\put(60,180){\circle*{3}}
\put(65,242){\circle*{3}}
\put(70,181){\circle*{3}}
\put(75,174){\circle*{3}}
\put(80,242){\circle{3}}
\put(85,169){\circle{3}}
\put(90,179){\circle*{3}}
\put(95,226){\circle*{3}}
\put(100,162){\circle*{3}}
\put(105,191){\circle{3}}
\put(110,202){\circle*{3}}
\put(115,156){\circle*{3}}
\put(120,209){\circle{3}}
\put(125,166){\circle*{3}}
\put(130,164){\circle{3}}
\put(135,199){\circle*{3}}
\put(140,144){\circle*{3}}
\put(145,206){\circle{3}}
\put(150,139){\circle*{3}}
\put(155,179){\circle{3}}
\put(160,154){\circle*{3}}
\put(165,159){\circle{3}}
\put(170,163){\circle*{3}}
\put(175,143){\circle{3}}
\put(180,166){\circle*{3}}
\put(185,134){\circle{3}}
\put(190,163){\circle*{3}}
\put(195,128){\circle{3}}
\put(200,157){\circle*{3}}
\put(205,122){\circle{3}}
\put(210,151){\circle*{3}}
\put(55,9){\line(0,1){5}}
\put(55,9){\line(-1,0){40}}
\put(10,0){{\small Type $--$}}
\put(57,9){\line(0,1){5}}
\put(68,9){\line(0,1){5}}
\put(77,9){\line(0,1){5}}
\put(105,9){\line(0,1){5}}
\put(57,9){\line(1,0){11}}
\put(77,9){\line(1,0){28}}
\put(60,0){{\small Type $-A$}}
\put(145,9){\line(0,1){5}}
\put(145,9){\line(1,0){35}}
\put(145,0){{\small Type $+A$}}
\put(180,9){\line(0,1){5}}
\put(185,9){\line(0,1){5}}
\put(185,9){\line(1,0){30}}
\put(190,0){{\small Type $++$}}
\put(150,50){\line(5,-3){60}}
\put(150,50){\line(-5,3){80}}
\put(73,95){{\footnotesize $Z_m=mQ$}}
\end{picture}
\caption{The solution associated with Eq.(\ref{eq:pertex1})}
\end{figure}
If we impose $\varepsilon =0$, then the function coincides with the Airy-type solution given in Eqs.(\ref{eq:z3}) and (\ref{eq:y3}) for $-18\leq m\leq -9$, which is of Type $-A$.
The values $( \eta_{-18} , Y_{-18} ) = (+1,0-\varepsilon ) $ and $(\zeta _{-9}, Z_{-9})= ( +1, 27+\varepsilon )$ in the case $\varepsilon =0$ show the indefinite evolution, although it is the unique evolution in the case $\varepsilon  \neq 0$.
If $0< \varepsilon  <1/4 $ and $m \leq -19 $, then the solution is written in the form of Proposition \ref{prop:sol<<} (i.e.~of Type $--$), namely
\begin{align}
& ( \eta_ m , Y_m ), \: (\zeta _m, Z_m)= \\
&  \left\{
\begin{array}{lll}
(+1,6j -36 -\varepsilon ), & ( -1,3j +11 -\varepsilon ) & (m= -3j ) \\
(-1,6j + 26 ), & ( +1,3j +15 +\varepsilon ) & (m= -3j +1 ) \\
(-1,6j -35 +\varepsilon ), & ( -1,3j -50 ) & (m= -3j +2) 
\end{array}
\right. \nonumber
\end{align}
for $j \geq 7$, and it is quite different from the Airy-type solution written as $( \zeta_{m} , Z_{m} ) = (+1, 3m+21) $ in Eq.(\ref{eq:z3}).
If $-18\leq m \leq  -9$, then the solution is written in the form of Eqs.(\ref{eq:YZNep1}) and (\ref{eq:YZNep2}), and it is of Type $-A$.
In the case $0\leq m\leq 6 $, the solution is in the form of Proposition \ref{prop:sol+A} (i.e.~of Type $+A$).
In the case $m\geq 7$, the solution is written in the form of Proposition \ref{prop:sol>>} (i.e.~of Type $++$), namely
\begin{align}
& ( \eta_ m , Y_m ), \: (\zeta _m, Z_m)=  \left\{
\begin{array}{lll}
(-1,-6j -22 ), & ( +1,21+\varepsilon ) & (m= 2j -1 ) \\
(+1,-6j +7 ), & ( -1,-21-\varepsilon  ) & (m= 2j ) 
\end{array}
\right.
\end{align}
for $j \geq 4$, and it is quite different from the Airy-type solution in Eq.(\ref{eq:z3}).
The function $(\zeta _m, Z_m )$ for $m \gg 1$ is oscillating and the solutions to the ultradiscrete Airy equation (see Eqs.(41) and (42) in \cite{IKMMS}) are also oscillating, but their exact forms are different.

We express the solution whose initial value $Y_{-18}$ is positive and small.
We choose the initial value as $( \eta_{-18} , Y_{-18} )= (+1,0+\varepsilon )$ and $(\zeta _{-18}, Z_{-18} )= (+1,29)$ $(0<3 \varepsilon  <1 )$. Then the solution of p-ud PII is written as follows:
\begin{align}
& ( \eta_ m , Y_m ), \: (\zeta _m, Z_m)= \label{eq:pertex2} \\
& \left\{
\begin{array}{lll}
(+1,6j -36 +\varepsilon ), & ( +1,3j +11 ) & (m= -3j , \: j \geq 6) \\
(+1,6j + 26 -\varepsilon ), & ( +1,3j +15 -\varepsilon ) & (m= -3j +1 , \: j \geq 6) \\
(+1,6j -35 ), & ( +1,3j -50 +\varepsilon  ) & (m= -3j +2, \: j \geq 6) \\
(+1,-6+\varepsilon ), & (-1,32-\varepsilon ) & (m=-15)\\
(-1,62-2\varepsilon ), & (+1,30-\varepsilon ) & (m=-14) \\
(-1,-11+\varepsilon ), & (-1,-30+\varepsilon ) & (m=-13) \\
(+1,-1), & (+1,30-\varepsilon ) & (m=-12) \\
(+1,46-\varepsilon ), & (+1,16) & (m=-11) \\
(+1,-18+\varepsilon ), & (-1,-16) & (m=-10) \\
(-1,11-\varepsilon ), & (-1, 27-\varepsilon ) & (m=-9 ) \\
(-1,22), & (+1, -5+\varepsilon ) & (m=-8 ) \\
(+1,-24+\varepsilon ), & ( -1, 5-\varepsilon ) & (m=-7 ) \\
(+1,29-2\varepsilon ), & ( +1, 24-\varepsilon ) & (m=-6 ) \\
(-1,-14+2\varepsilon ), & ( -1, -24+\varepsilon ) & (m=-5 ) \\
(+1,-16-\varepsilon ), & ( +1, 24-\varepsilon ) & (m=-4 ) \\
(-1,19+\varepsilon ), & ( -1, -5+2\varepsilon ) & (m=-3 ) \\
(+1,-36+\varepsilon ), & ( +1, 5-2\varepsilon ) & (m=-2 ) \\
(+1,26-3\varepsilon ), & ( +1, 21-\varepsilon ) & (m=-1 ) \\
(-1,-41+3\varepsilon ), & ( -1, -21+\varepsilon ) & (m=0 ) \\
(+1,-1-2\varepsilon ), & ( +1, 21-\varepsilon ) & (m=1 ) \\
(-1,-26+2\varepsilon ), & ( -1, -21+\varepsilon ) & (m=2 ) \\
(+1,-21-\varepsilon ), & ( +1, 21-\varepsilon ) & (m=3 ) \\
(-1,-17+\varepsilon ), & ( -1, -21+\varepsilon ) & (m=4 ) \\
(+1,-37), & ( +1, 21-\varepsilon ) & (m=5 ) \\
(-1,-14), & ( -1, -21+\varepsilon ) & (m=6 ) \\
(+1,-6j -22 +\varepsilon ), & ( +1,21-\varepsilon ) & (m= 2j -1 , \: j \geq 4) \\
(-1,-6j +7 -\varepsilon ), & ( -1,-21+\varepsilon  ) & (m= 2j , \: j \geq 4) .
\end{array}
\right. \nonumber
\end{align}
Although the amplitude functions shift slightly from Eq.(\ref{eq:pertex1}), the sign functions change essentially.

Let us investigate the solutions of p-ud PII which are related with the function $(\zeta^{(M)}(m) , Z^{(M)}(m))$ ($M \in {\mathbb Z}_{\leq -1}$) in section \ref{sec:etaYIIT}.
We assume that the constant $A$ in the p-ud PII is written as $A=(2M+1)Q$ ($M \in {\mathbb Z}_{\leq -1}$), $k_0 \in \{ 0, 1 , \cdots , -M-1 \}$, $\chi \in \{ +1, -1\}$ and the value $m_0 \in {\mathbb Z}_{<0}$ satisfies Eq.(\ref{eq:m0M}).
Let $\varepsilon $ be a real number such that $|\varepsilon |$ is sufficiently small.

We set $A=(2M+1)Q$, $m'= m_0 +M+1$, $k=j-1$, $C' = (M +1) Q +C $, $D'=\varepsilon $, $\eta =-\chi $, $\zeta =+1$ in Eq.(\ref{eq:solZ+--Y+++}).
Then we have
\begin{align}
& (\zeta _{m_0 +M + 3 j -2},Z_{m_0 +M + 3 j -2 } )= ( +1 , (m_0 +M+j) Q+\varepsilon  ), \label{eq:ZYMep1} \\
& (\eta _{m_0 +M + 3 j -1} ,Y_{m_0 +M + 3 j -1}) = ((-1)^{j-1} \chi , (m_0 +2M +j(j+1)) Q-(j-1) \varepsilon  +C ),  \nonumber \\
& (\zeta _{m_0 +M + 3 j -1} , Z_{m_0 +M + 3 j -1}) = ( (-1)^{j-1} \chi  ,(j^2 +M) Q -j \varepsilon  +C ),  \nonumber \\
& (\eta _{m_0 +M + 3 j } , Y_{m_0 +M + 3 j } )= ( +1 , 2 (j +M) Q -\varepsilon  ),\nonumber  \\
& (\zeta _{m_0 +M + 3 j },Z_{m_0 +M + 3 j  } )= ((-1)^{j-1} \chi , (-j^2 + 2j +M ) Q + (j-1) \varepsilon  -C ), \nonumber \\
& (\eta _{m_0 +M + 3 j +1} ,Y_{m_0 +M + 3 j +1}) = ((-1)^{j-1} \chi  , (m_0 +2M -j^2+3j +1)Q+ j \varepsilon -C ).  \nonumber 
\end{align}
We discuss the case $k_0=0$ with the condition $(-m_0- (M+1)M)Q < C < (m_0 +1) Q $.
Then the function in Eq.(\ref{eq:ZYMep1}) satisfies Eq.(\ref{eq:condZ+--Y+++}) for $m'=m_0 +M+1$ and $K=-M-2$, $Y_{m} >0$ $(m=m_0 -2M-1)$ and $Z_m<mQ$ $(m=m_0 -2M-1)$.
Hence  it follows from Proposition \ref{prop:sol-B} that the function $(\eta _{m} , Y_{m} )$ for $m_0 +M +2 \leq m \leq m_0 -2M $ and  $(\zeta _{m} , Z_{m} )$ for $m_0 +M +1 \leq m \leq m_0 -2M-1$ in Eq.(\ref{eq:ZYMep1}) satisfies p-ud PII.
If $\varepsilon =0$, then the function defined by Eq.(\ref{eq:ZYMep1}) coincides with the solution in Propositions \ref{prop:zpum_0_M} (ii) and \ref{prop:::} (i), which has the indefinite evolution caused by $(\zeta _{m_0+M+1} , Z_{m_0 +M+1} ) =(+1, (m_0+M+1)Q)$ and $(\eta _{m_0 -2M } , Y_{m_0 -2M} )= (+1 , 0 )$.
By imposing $\varepsilon \neq 0$, the indefiniteness of the evolution may disappear.
In short, we obtained the solution of Type $-B$ for $m_0 +M+2  \leq m \leq m_0 -2M-1$.

We investigate the backward evolution for $m \leq m_0+M+1$.
Recall that 
\begin{align}
& (\eta _{m_0 +M +2} , Y_{m_0 +M +2} ) = ( \chi , (m_0 +2M +2) Q +C ) , \\
& (\zeta _{m_0 +M +1} , Z_{m_0 +M +1} ) = ( +1 , (m_0 +M+1) Q+ \varepsilon ) . \nonumber 
\end{align}
If $\varepsilon >0$, then it follows from $Z_{m_0 +M +1} > (m_0 +M+1) Q$ that
\begin{align}
& (\eta _{m_0 +M + 1} , Y_{m_0 +M + 1} )= ( -\chi ,(m_0 +2M -1) Q -C  ), \\
& (\zeta _{m_0 +M },Z_{m_0 +M } )= ( -\chi  , M Q - \varepsilon  -C  ), \nonumber \\
& (\eta _{m_0 +M } , Y_{m_0 +M } )= ( +1 , 2 M Q -\varepsilon  ),\nonumber  \\
& (\zeta _{m_0 +M -1} , Z_{m_0 +M -1}) = ( -\chi  ,M Q +C ),  \nonumber \\
& (\eta _{m_0 +M -1} ,Y_{m_0 +M -1}) = ( -\chi , (m_0 +2M ) Q + \varepsilon  +C),  \nonumber \\
& (\zeta _{m_0 +M -2},Z_{m_0 +M -2 } )= ( +1 , (m_0 +M) Q+ \varepsilon  ), \nonumber 
\end{align}
and the condition $Z_{m} <mQ$ ($m=m_0 +M , m_0 +M-1$ and $ m_0 +M-2$) and $Y_m >0$ ($m=m_0 +M+1, m_0 +M+2$ and $ m_0 +M+ 3$) is satisfied.
Then we can use Proposition \ref{prop:sol<<} (ii), and the solution is expressed in the form of Proposition \ref{prop:sol<<} (i) by setting $c_{m_0} =-C - \varepsilon -(m_0+1) Q/3$, $c_{m_0 +1} = \varepsilon +(2m_0+1) Q/3$ and $c_{m_0 +2} = C -m_0 Q/3$.
Namely if $j<0$, then 
\begin{align}
& (\zeta _{m_0 +M + 3 j },Z_{m_0 +M + 3 j  } )= ( -\chi  , (M +j) Q - \varepsilon  -C  ), \label{eq:IgaepsiM1<<} \\
& (\eta _{m_0 +M + 3 j } , Y_{m_0 +M + 3 j } )= ( +1 , 2 (M +j ) Q -\varepsilon  ),\nonumber  \\
& (\zeta _{m_0 +M + 3 j -1} , Z_{m_0 +M + 3 j -1}) = ( -\chi  ,(M+j) Q +C ),  \nonumber \\
& (\eta _{m_0 +M + 3 j -1} ,Y_{m_0 +M + 3 j -1}) = ( -\chi , (m_0 +2M +2j ) Q + \varepsilon  +C),  \nonumber \\
& (\zeta _{m_0 +M + 3 j -2},Z_{m_0 +M + 3 j -2 } )= ( +1 , (m_0 +M+j) Q+ \varepsilon  ), \nonumber \\
& (\eta _{m_0 +M + 3 j -2} , Y_{m_0 +M + 3 j -2} )= ( -\chi ,(m_0 +2M +2j -1) Q -C  ) . \nonumber 
\end{align}
If $\varepsilon <0$, then we have $Z_{m_0 +M +1} < (m_0 +M+1) Q$ and the condition $Z_{m} <mQ$ ($m=m_0 +M +3, m_0 +M+2$ and $m_0 +M +1$) and $Y_m >0$ ($m=m_0 +M+4, m_0 +M+3$ and $m_0 +M+ 2$) is satisfied.
Then we can use Proposition \ref{prop:sol<<} (ii) and the solution is expressed in the form of Proposition \ref{prop:sol<<} (i) by setting $c_{m_0} =-C -(m_0+1) Q/3$, $c_{m_0 +1} = \varepsilon +(2m_0+1) Q/3$ and $c_{m_0 +2} = C  - \varepsilon-m_0 Q/3$, i.e. we have 
\begin{align}
& (\zeta _{m_0 +M + 3 j },Z_{m_0 +M + 3 j  } )= ( \chi  , (M +j) Q  -C  ) , \label{eq:IgaepsiM2<<} \\
& (\eta _{m_0 +M + 3 j } , Y_{m_0 +M + 3 j } )= ( +1 , 2 (M +j ) Q -\varepsilon  ),\nonumber  \\
& (\zeta _{m_0 +M + 3 j -1} , Z_{m_0 +M + 3 j -1}) = ( \chi  ,(M +j) Q - \varepsilon  +C  ),  \nonumber \\
& (\eta _{m_0 +M + 3 j -1} ,Y_{m_0 +M + 3 j -1}) = ( \chi , (m_0 +2M +2j ) Q +C ),  \nonumber \\
& (\zeta _{m_0 +M + 3 j -2},Z_{m_0 +M + 3 j -2 } )= ( +1 , (m_0 +M+j) Q+ \varepsilon  ), \nonumber \\
& (\eta _{m_0 +M + 3 j -2} , Y_{m_0 +M + 3 j -2} )= ( \chi ,(m_0 +2M +2j -1 ) Q + \varepsilon -C )  \nonumber 
\end{align} 
for $j \leq 0$.
In either case, we obtained the solution of Type $--$.
Recall that the Airy-type solution for $m \leq m_0 +M $ is written as Eq.(\ref{eq:IgaMsolm<<}) and completely different from the solution in Eq.(\ref{eq:IgaepsiM1<<}) or Eq.(\ref{eq:IgaepsiM2<<}).

We now investigate the evolution for $m \geq m_0-2M$.
By recalling the values $(\zeta _{m_0 -2M -1} , Z_{m_0 -2M -1}) $ and $(\eta _{m_0 -2M } , Y_{m_0 -2M } ) $, we have 
\begin{align}
& (\zeta _{m_0 -2M } , Z_{m_0 -2M }) = \left\{ 
\begin{array}{ll}
( (-1)^{M} \chi  ,-(M^2 +M) Q +M \varepsilon - C ), &  \varepsilon >0 , \\
( (-1)^{M-1} \chi  ,-(M^2 +M) Q +(M -1) \varepsilon - C ), &  \varepsilon <0 .
\end{array}
\right.
\end{align}
Therefore it is completely different from the Airy-type solution for $m_0 -2M \leq  m \leq M+1 $ written as $(\zeta^{(M)}(m) , Z^{(M)}(m))=(+1 , (-m -2M-1) Q)$.

Let us consider the case $k_0 \neq 0$ with the condition $ (-m_0- (M+k_0+1)(M+k_0))Q  \leq C < (-m_0 - (M+k_0)(M+k_0-1))Q  $.
Then the function in Eq.(\ref{eq:ZYMep1}) satisfies Eq.(\ref{eq:condZ+--Y+++}) for $m'=m_0 +M+1$ and $K=-M-k_0-2$, $Y_{m} >0$ $(m=m_0 -2M -3k_0 -1$ and $m_0 -2M -3k_0 )$ and $Z_m<mQ$ $(m=m_0 -2M -3k_0 -1)$.

Hence it follows from Proposition \ref{prop:sol-B} that the function $(\eta _{m} , Y_{m} )$ for $m_0 +M +2 \leq m \leq m_0 -2M - 3 k_0 $ and  $(\zeta _{m} , Z_{m} )$  for $m_0 +M +1 \leq m \leq m_0 -2M - 3 k_0 $ in Eq.(\ref{eq:ZYMep1}) satisfies p-ud PII.
We have $Z_{m_0 -2M -3k_0} -(m_0 -2M -3k_0 )Q = -C -m_0 Q +(M+k_0 ) (M+k_0-1)Q + (j-1) \varepsilon >0$, and the condition is changed.
By applying the forward evolution of p-ud PII, we have 
\begin{align}
& (\eta _{m_0 -2M -3k_0 +1},Y_{m_0 -2M-3k_0+1} )= (-1 ,(2m_0 - 2M -4 k_0 +1) Q +\varepsilon ), \label{eq:ZYMep1p} \\
& (\zeta _{m_0 -2M -3k_0 +1},Z_{m_0 -2M-3k_0+1} )= \nonumber \\ 
& \qquad ((-1)^{-M-k_0} , ( 2m_0  +M + (-M - k_0 +1)^2) Q + (M+ k_0 +2) \varepsilon + C  ), \nonumber \\
& (\eta _{m_0 -2M -3k_0 +2},Y_{m_0 -2M-3k_0+2} )= (+1 , ( -2k_0 +2) Q - \varepsilon ) \nonumber 
\end{align}
with the condition $Z_{m_0 -2M-3k_0+1} > (m_0 -2M-3k_0+1) Q$ and $Y_{m} >0$ $(m=m_0 -2M -3k_0 +1$ and $ m_0 -2M -3k_0 +2)$.
To obtain further evolution, we set $D'= \varepsilon $, $A=(2M+1)Q$, $m'= m_0 +M$, $k=j-1$, $C' = (2 m_0 + M +1) Q +C +3 \varepsilon  $, $\eta =-\chi $, $\zeta =+1$ in Eq.(\ref{eq:solZ+--Y+++}). Then we have
\begin{align}
& (\zeta _{m_0 +M + 3 j -2},Z_{m_0 +M + 3 j -2 } )= ( (-1)^{j-1} \chi , (2 m_0 + M +j^2 ) Q -(j-3)  \varepsilon  +C ),  \nonumber  \\
& (\eta _{m_0 +M + 3 j -1} ,Y_{m_0 +M + 3 j -1}) = ( +1 , 2(M+ j) Q - \varepsilon  ), \label{eq:ZYMep2} \\
& (\zeta _{m_0 +M + 3 j -1} , Z_{m_0 +M + 3 j -1}) = ( (-1)^{j-1} \chi  ,(-2 m_0 + M- j^2 + 2j )Q+(j-4) \varepsilon  - C ),  \nonumber \\
& (\eta _{m_0 +M + 3 j } , Y_{m_0 +M + 3 j } )= ( (-1)^{j-1} , (-m_0 +2M -j^2 +3j )Q+ (j-3) \varepsilon  -C ),\nonumber  \\
& (\zeta _{m_0 +M + 3 j },Z_{m_0 +M + 3 j  } )= ( +1 , (m_0 +M +j) Q+ \varepsilon  ), \nonumber \\
& (\eta _{m_0 +M + 3 j +1} ,Y_{m_0 +M + 3 j +1}) = ((-1)^{j-1} \chi  , (3m_0 +2M + j^2 +3j +1) Q-(j-3)  \varepsilon  +C).  \nonumber 
\end{align}
The condition $Y_m >0 $, $Z_{m_0 +M + 3 j -1} < (m_0 +M + 3 j -1)Q$, $Z_{m_0 +M + 3 j } > (m_0 +M + 3 j )Q$ and $Z _{m_0 +M + 3 j +1} < (m_0 +M + 3 j + 1)Q $ is satisfied for $m_0 -2M-3k_0+2 \leq m \leq m_0 -2M $ and $-M -k_0+ 1 \leq j \leq -M -1$.
Hence the functions $(\eta _{m} , Y_{m} )$ for $m_0 -2M -3k_0 +2 \leq m \leq m_0 -2M - 1$ and  $(\zeta _{m} , Z_{m} )$  for $m_0 -2M -3k_0 +1 \leq m \leq m_0 -2M -2$ in Eq.(\ref{eq:ZYMep2}) satisfy p-ud PII.
Note that, if $\varepsilon =0$, then the function defined by Eqs.(\ref{eq:ZYMep1}), (\ref{eq:ZYMep1p}) and (\ref{eq:ZYMep2}) coincides with the solution $(\eta^{(M)}(m) , Y^{(M)}(m))$, $(\zeta^{(M)}(m) , Z^{(M)}(m))$ in section \ref{sec:etaYIIT}, which has the indefinite evolution caused by $(\zeta _{m_0+M+1} , Z_{m_0 +M+1} ) =(+1, (m_0+M+1)Q)$ and $(\eta _{m_0 -2M-1 } , Y_{m_0 -2M-1} )= (+1 , 0 )$.
If $\varepsilon \neq 0$, then the indefiniteness of the evolution may disappear.
In the case $k_0 >0$, we obtained the two jointed solution of type $-B$ for $m_0+M+1 \leq m \leq m_0 -2M-1  $.

The solution for $m \leq m_0+M+1$ is described exactly the same as Eqs.(\ref{eq:IgaepsiM1<<}) or (\ref{eq:IgaepsiM2<<}) which is of Type $--$, and it is different from the Airy-type solution.
The solution for $m_0 -2M \leq m $ is obtained similarly to the case $k_0=0$, and it is also different from the Airy-type solution.

We give an example which is related with the solution in Eqs.(\ref{eq:ZM-3}) and (\ref{eq:YM-3}).
Set $Q=-2$ and $M=-3$ $(A=-5Q)$ and choose the initial value as  $(\zeta _{-12}, Z_{-12} )= (+1,24+\varepsilon )$ and $( \eta_{-11} , Y_{-11} )= (+1,18)$ $(0<3 \varepsilon  <1 )$. Then the solution of p-ud PII is written as follows:
\begin{align}
( \eta_ m , Y_m ), \: (\zeta _m, Z_m)= \left\{
\begin{array}{lll}
(-1,22+\varepsilon ), & (-1,-4) & (m=-14) \\
(+1,12-\varepsilon ), & (-1,16-\varepsilon ) & (m=-13) \\
(-1,40), & (+1,24+\varepsilon ) & (m=-12) \\
(+1,18), & (+1,-6-\varepsilon ) & (m=-11) \\
(+1,8-\varepsilon ), & (+1,14) & (m=-10) \\
(+1,36+\varepsilon ), & (+1, 22+\varepsilon ) & (m=-9 ) \\
(-1,10-\varepsilon ), & (-1, -12-2\varepsilon ) & (m=-8 ) \\
(+1,4-\varepsilon ), & ( -1, 16+\varepsilon ) & (m=-7 ) \\
(-1,34+\varepsilon ), & ( +1, 18) & (m=-6 ) \\
(+1,0-\varepsilon ), & (-1, -18) & (m=-5 ) \\
(-1,2+\varepsilon ), & (+1, 20+\varepsilon ) & (m=-4 ) \\
(+1,24-\varepsilon ), & ( +1, 4-2\varepsilon ) & (m=-3 ) \\
(+1,-4-\varepsilon ), & ( -1, -4+2\varepsilon ) & (m=-2 ) \\
(-1,14+3\varepsilon ), & ( +1, 18+\varepsilon ) & (m=-1 ) \\
(+1,0-3\varepsilon ), & (+1, -18-\varepsilon ) & (m=0 ) \\
(+1,-8+2\varepsilon ), & ( -1, 18+\varepsilon ) & (m=1 ) \\
(-1,14-2\varepsilon ), & (+1, -4-3\varepsilon ) & (m=2 ) \\
(+1,-12-\varepsilon ), & ( -1, 4+3\varepsilon ) & (m=3 ) \\
(-1,10+\varepsilon ), & ( +1, 6-2\varepsilon ) & (m=4 ) \\
(+1,-16-\varepsilon ), & ( -1, -6+2\varepsilon ) & (m=5 ) \\
(-1,6+\varepsilon ), & ( +1, 12-\varepsilon ) & (m=6 ) \\
(+1,-20-\varepsilon ), & (-1, -12+\varepsilon ) & (m=7) \\
(-1,2+\varepsilon ), & ( +1, 14) & (m=8 ) \\
(+1,-24-\varepsilon ), & ( -1, -14) & (m=9) \\
(-1,-2+\varepsilon ), & ( +1, 14) & (m=10 ) .
\end{array}
\right.\label{eq:pertex3}
\end{align}
The graph of the solution is written as Figure 4.
The function $( \eta_{m} , Y_{m} )$ (resp. $( \zeta_{m} , Z_{m} )$) for $-11\leq m\leq -5$ (resp. $-12\leq m\leq -6$) is of Type $-B$,
written in the form of Eqs.(\ref{eq:condZ++Y+-}) and (\ref{eq:solZ++Y+-}), 
and they coincide with the Airy-type solution given in Eq.(\ref{eq:YM-3}) (resp. Eq.(\ref{eq:ZM-3})) with the additional condition $\varepsilon = 0$.
The values $( \zeta_{-12} , Z_{-12} ) = (+1,24+\varepsilon ) $ and $(\eta _{-5}, Y_{-5})= ( +1,0-\varepsilon )$ for the case $\varepsilon =0$ show the indefinite evolution, although the evolution is unique in the case $\varepsilon  \neq 0$.
Assume that $0<3 \varepsilon  <1 $.
In the case $m \geq -12 $, the solution is written in the form of Proposition \ref{prop:sol<<} (i.e.~of Type $--$), namely
\begin{align}
& ( \eta_ m , Y_m ), \: (\zeta _m, Z_m)= \\
& \left\{
\begin{array}{lll}
(-1,4j +24 ), & ( +1,2j +16 +\varepsilon ) & (m= -3j , \: j \geq 4) \\
(-1,4j + 2 +\varepsilon  ), & ( -1,2j -14 ) & (m= -3j +1 , \: j \geq 5) \\
(+1,4j -8 -\varepsilon ), & ( -1,2j +6 -\varepsilon  ) & (m= -3j +2, \: j \geq 5) 
\end{array}
\right.  \nonumber
\end{align}
and it is quite different from the Airy-type solution written as $( \zeta_{m} , Z_{m} ) = (+1, -2m) $ in Eq.(\ref{eq:ZM-3}).
In the case $3\leq m\leq 9 $, the solution is written in the form of Proposition \ref{prop:sol+B} (i.e.~of Type $+B$).
In the case $m\geq 9$, the solution is is written in the form of Proposition \ref{prop:sol>>} (i.e.~of Type $++$), namely
\begin{align}
& ( \eta_ m , Y_m ), \: (\zeta _m, Z_m)= \\
& \left\{
\begin{array}{lll}
(+1,-4j -4 -\varepsilon ), & ( -1,-14) & (m= 2j -1 , \: j \geq 5) \\
(-1,-4j +18 +\varepsilon ), & ( +1,14) & (m= 2j , \: j \geq 5) 
\end{array}
\right.  \nonumber
\end{align}
and it is quite different from the Airy-type solution written as $( \zeta_{m} , Z_{m} ) = (+1, (- m-2M-1)Q) $.
\begin{figure}
\begin{center}
\begin{picture}(230,200)(30,0)
\put(30,50){\circle{3}}
\put(35,70){\circle{3}}
\put(40,78){\circle*{3}}
\put(45,48){\circle{3}}
\put(50,68){\circle{3}}
\put(55,76){\circle*{3}}
\put(60,46){\circle{3}}
\put(65,66){\circle{3}}
\put(70,74){\circle*{3}}
\put(75,44){\circle*{3}}
\put(80,64){\circle*{3}}
\put(85,72){\circle*{3}}
\put(90,38){\circle{3}}
\put(95,66){\circle{3}}
\put(100,68){\circle*{3}}
\put(105,32){\circle{3}}
\put(110,70){\circle*{3}}
\put(115,54){\circle*{3}}
\put(120,46){\circle{3}}
\put(125,68){\circle*{3}}
\put(130,32){\circle*{3}}
\put(135,68){\circle{3}}
\put(140,46){\circle*{3}}
\put(145,54){\circle{3}}
\put(150,56){\circle*{3}}
\put(155,44){\circle{3}}
\put(160,62){\circle*{3}}
\put(165,38){\circle{3}}
\put(170,64){\circle*{3}}
\put(175,36){\circle{3}}
\put(180,64){\circle*{3}}
\put(185,36){\circle{3}}
\put(190,64){\circle*{3}}
\put(195,36){\circle{3}}
\put(200,64){\circle*{3}}
\put(25,50){\vector(1,0){170}}
\put(130,15){\vector(0,1){80}}
\put(80,15){\line(0,1){80}}
\put(180,15){\line(0,1){80}}
\put(132,90){{\footnotesize $Z_m$}}
\put(190,43){{\footnotesize $m$}}
\put(172,53){{\scriptsize $10$}}
\put(124,53){{\scriptsize $0$}}
\put(64,54){{\scriptsize $-10$}}
\put(129,80){\line(1,0){2}}
\put(132,78){{\scriptsize $30$}}
\put(129,20){\line(1,0){2}}
\put(131,17){{\scriptsize $-30$}}
\put(65,9){\line(0,1){5}}
\put(65,9){\line(-1,0){40}}
\put(20,0){{\small Type $--$}}
\put(72,9){\line(0,1){5}}
\put(78,9){\line(0,1){5}}
\put(83,9){\line(0,1){5}}
\put(100,9){\line(0,1){5}}
\put(72,9){\line(1,0){6}}
\put(83,9){\line(1,0){17}}
\put(70,0){{\small Type $-B$}}
\put(135,9){\line(0,1){5}}
\put(135,9){\line(1,0){30}}
\put(128,0){{\small Type $+B$}}
\put(165,9){\line(0,1){5}}
\put(170,9){\line(0,1){5}}
\put(170,9){\line(1,0){30}}
\put(175,0){{\small Type $++$}}
\put(25,150){\vector(1,0){170}}
\put(130,105){\vector(0,1){90}}
\put(80,105){\line(0,1){90}}
\put(180,105){\line(0,1){90}}
\put(132,190){{\footnotesize $Y_m$}}
\put(190,153){{\footnotesize $m$}}
\put(172,153){{\scriptsize $10$}}
\put(124,153){{\scriptsize $0$}}
\put(64,154){{\scriptsize $-10$}}
\put(129,180){\line(1,0){2}}
\put(132,178){{\scriptsize $30$}}
\put(129,120){\line(1,0){2}}
\put(131,117){{\scriptsize $-30$}}
\put(30,180){\circle{3}}
\put(35,170){\circle*{3}}
\put(40,198){\circle{3}}
\put(45,176){\circle{3}}
\put(50,166){\circle*{3}}
\put(55,194){\circle{3}}
\put(60,172){\circle{3}}
\put(65,162){\circle*{3}}
\put(70,190){\circle{3}}
\put(75,168){\circle*{3}}
\put(80,158){\circle*{3}}
\put(85,186){\circle*{3}}
\put(90,160){\circle{3}}
\put(95,154){\circle*{3}}
\put(100,184){\circle{3}}
\put(105,150){\circle*{3}}
\put(110,152){\circle{3}}
\put(115,174){\circle*{3}}
\put(120,146){\circle*{3}}
\put(125,164){\circle{3}}
\put(130,150){\circle*{3}}
\put(135,142){\circle*{3}}
\put(140,164){\circle{3}}
\put(145,138){\circle*{3}}
\put(150,160){\circle{3}}
\put(155,134){\circle*{3}}
\put(160,156){\circle{3}}
\put(165,130){\circle*{3}}
\put(170,152){\circle{3}}
\put(175,126){\circle*{3}}
\put(180,148){\circle{3}}
\put(185,122){\circle*{3}}
\put(190,144){\circle{3}}
\put(195,118){\circle*{3}}
\put(200,140){\circle{3}}
\put(130,50){\line(5,-2){80}}
\put(130,50){\line(-5,2){100}}
\put(40,87){{\footnotesize $Z_m=mQ$}}
\end{picture}
\end{center}
\caption{The solution associated with Eq.(\ref{eq:pertex3})}
\end{figure}

We express the solution such that $24 - Z_{-12} $ is positive and small.
We choose the initial value as  $(\zeta _{-12}, Z_{-12} )= (+1,24 - \varepsilon )$ and $( \eta_{-11} , Y_{-11} )= (+1,18)$ $(0<3 \varepsilon  <1 )$. Then the solution of p-ud PII is written as follows:
\begin{align}
& ( \eta_ m , Y_m ), \: (\zeta _m, Z_m)= \label{eq:pertex4} \\
& \left\{
\begin{array}{lll}
(-1,4j +24 -\varepsilon  ), & ( +1,2j +16 -\varepsilon ) & (m= -3j , \: j \geq 3) \\
(-1,4j + 2 ), & ( -1,2j -14 +\varepsilon ) & (m= -3j +1 , \: j \geq 4) \\
(+1,4j -8 +\varepsilon ), & ( -1,2j +6 ) & (m= -3j +2, \: j \geq 4) \\
(-1,10+\varepsilon ), & (-1, -12+2\varepsilon ) & (m=-8 ) \\
(+1,4+\varepsilon ), & ( -1, 16-\varepsilon ) & (m=-7 ) \\
(-1,34-\varepsilon ), & ( +1, 18) & (m=-6 ) \\
(+1,0+\varepsilon ), & (+1, -18+\varepsilon ) & (m=-5 ) \\
(+1,2), & ( +1, 20-\varepsilon ) & (m=-4 ) \\
(-1,24), & ( -1, 4+\varepsilon ) & (m=-3 ) \\
(+1,-4+\varepsilon ), & (+1, -4-\varepsilon ) & (m=-2 ) \\
(+1,14-2\varepsilon ), & (+1, 18-\varepsilon ) & (m=-1 ) \\
(-1,0+2\varepsilon ), & (-1, -18+3\varepsilon ) & (m=0 ) \\
(+1,-8+\varepsilon ), & (+1, 18-3\varepsilon ) & (m=1 ) \\
(-1,14-\varepsilon ), & (-1, -4+2\varepsilon ) & (m=2 ) \\
(+1,-12+\varepsilon ), & (+1, 4-2\varepsilon ) & (m=3 ) \\
(-1,10-\varepsilon ), & ( -1, 6+\varepsilon ) & (m=4 ) \\
(+1,-16+\varepsilon ), & ( +1, -6-\varepsilon ) & (m=5 ) \\
(-1,6-\varepsilon ), & ( -1, 12) & (m=6 ) \\
(+1,-20+\varepsilon ), & (+1, -12) & (m=7) \\
(-1,2-\varepsilon ), & ( -1, 14-\varepsilon ) & (m=8 ) \\
(+1,-4j -4 +\varepsilon ), & ( +1,-14+\varepsilon ) & (m= 2j -1 , \: j \geq 5) \\
(-1,-4j +18 -\varepsilon ), & ( -1,14-\varepsilon ) & (m= 2j , \: j \geq 5) .
\end{array}
\right. \nonumber
\end{align}

\section{Concluding Remarks} \label{sec:concl} 

In this paper we introduced the simultaneous p-ud PII by setting $y(q^{m+1}) = z(q^{m+1}) z(q^{m}) + 1$ in Eq.(\ref{eq:qPII}) and discussed its solutions.
In particular we investigated the solutions of p-ud PII which are written as ultradiscrete limit of determinant-type solutions of $q$-PII and the solutions whose initial value is perturbed.
We also investigated some patterns of two-parameter solutions.

In the case of differential Painlev\'e equations, the variables of simultaneous equation are chosen to fit with the symplectic structure.
We are not sure that our choice of the variable $y(q^{m+1}) = z(q^{m+1}) z(q^{m}) + 1$ is a good $q$-deformation of the symplectic coordinate, and it would be desirable to find better choice of the variables.
 
One of the merit of studying ultradiscrete equations is that we may obtain exact solutions and it may help analysis of the original $q$-difference equations.
To illustrate problems for future, we show another example of solution of p-ud PII with the parameters $Q=-3$ and $A=7Q$ and the initial condition $( \eta_ 0 , Y_0 ) = (-1,11)$ and $(\zeta _0 , Z_0 ) = (-1,-9) $.
Then the graph of the solution is written as Figure 5, where $\bullet $ (resp. $\circ $) represents the amplitute with $\zeta _m =+1 $ (resp. $\zeta _m =-1 $).
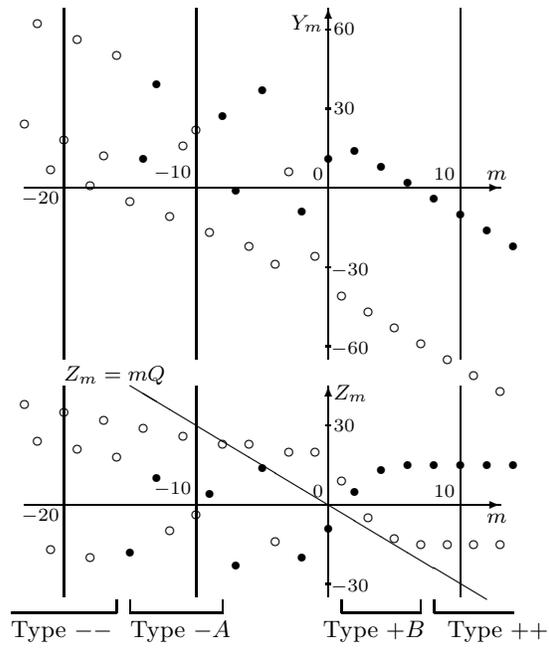
\begin{figure}
\begin{center}
\begin{picture}(230,280)(30,0)
\put(15,88){\circle{3}}
\put(20,74){\circle{3}}
\put(25,33){\circle{3}}
\put(30,85){\circle{3}}
\put(35,71){\circle{3}}
\put(40,30){\circle{3}}
\put(45,82){\circle{3}}
\put(50,68){\circle{3}}
\put(55,32){\circle*{3}}
\put(60,79){\circle{3}}
\put(65,60){\circle*{3}}
\put(70,40){\circle{3}}
\put(75,76){\circle{3}}
\put(80,46){\circle{3}}
\put(85,54){\circle*{3}}
\put(90,73){\circle{3}}
\put(95,27){\circle*{3}}
\put(100,73){\circle{3}}
\put(105,64){\circle*{3}}
\put(110,36){\circle{3}}
\put(115,70){\circle{3}}
\put(120,30){\circle*{3}}
\put(125,70){\circle{3}}
\put(130,41){\circle*{3}}
\put(135,59){\circle{3}}
\put(140,55){\circle*{3}}
\put(145,45){\circle{3}}
\put(150,63){\circle*{3}}
\put(155,37){\circle{3}}
\put(160,65){\circle*{3}}
\put(165,35){\circle{3}}
\put(170,65){\circle*{3}}
\put(175,35){\circle{3}}
\put(180,65){\circle*{3}}
\put(185,35){\circle{3}}
\put(190,65){\circle*{3}}
\put(195,35){\circle{3}}
\put(200,65){\circle*{3}}
\put(15,50){\vector(1,0){180}}
\put(130,15){\vector(0,1){80}}
\put(80,15){\line(0,1){80}}
\put(180,15){\line(0,1){80}}
\put(30,15){\line(0,1){80}}
\put(132,90){{\footnotesize $Z_m$}}
\put(190,43){{\footnotesize $m$}}
\put(170,53){{\scriptsize $10$}}
\put(124,53){{\scriptsize $0$}}
\put(64,54){{\scriptsize $-10$}}
\put(14,44){{\scriptsize $-20$}}
\put(129,80){\line(1,0){2}}
\put(132,78){{\scriptsize $30$}}
\put(129,20){\line(1,0){2}}
\put(131,17){{\scriptsize $-30$}}
\put(50,9){\line(0,1){5}}
\put(50,9){\line(-1,0){40}}
\put(10,0){{\small Type $--$}}
\put(55,9){\line(0,1){5}}
\put(90,9){\line(0,1){5}}
\put(55,9){\line(1,0){35}}
\put(55,0){{\small Type $-A$}}
\put(135,9){\line(0,1){5}}
\put(135,9){\line(1,0){30}}
\put(128,0){{\small Type $+B$}}
\put(165,9){\line(0,1){5}}
\put(170,9){\line(0,1){5}}
\put(170,9){\line(1,0){30}}
\put(175,0){{\small Type $++$}}
\put(15,170){\vector(1,0){180}}
\put(130,105){\vector(0,1){133}}
\put(180,105){\line(0,1){133}}
\put(80,105){\line(0,1){133}}
\put(30,105){\line(0,1){133}}
\put(116,230){{\footnotesize $Y_m$}}
\put(190,173){{\footnotesize $m$}}
\put(170,173){{\scriptsize $10$}}
\put(124,173){{\scriptsize $0$}}
\put(64,174){{\scriptsize $-10$}}
\put(14,164){{\scriptsize $-20$}}
\put(129,230){\line(1,0){2}}
\put(129,200){\line(1,0){2}}
\put(132,228){{\scriptsize $60$}}
\put(132,198){{\scriptsize $30$}}
\put(129,140){\line(1,0){2}}
\put(131,137){{\scriptsize $-30$}}
\put(129,110){\line(1,0){2}}
\put(131,107){{\scriptsize $-60$}}
\put(15,194){\circle{3}}
\put(20,232){\circle{3}}
\put(25,177){\circle{3}}
\put(30,188){\circle{3}}
\put(35,226){\circle{3}}
\put(40,171){\circle{3}}
\put(45,182){\circle{3}}
\put(50,220){\circle{3}}
\put(55,165){\circle{3}}
\put(60,181){\circle*{3}}
\put(65,209){\circle*{3}}
\put(70,159){\circle{3}}
\put(75,186){\circle{3}}
\put(80,192){\circle{3}}
\put(85,153){\circle{3}}
\put(90,197){\circle*{3}}
\put(95,169){\circle*{3}}
\put(100,148){\circle{3}}
\put(105,207){\circle*{3}}
\put(110,141){\circle{3}}
\put(115,176){\circle{3}}
\put(120,161){\circle*{3}}
\put(125,144){\circle{3}}
\put(130,181){\circle*{3}}
\put(135,129){\circle{3}}
\put(140,184){\circle*{3}}
\put(145,123){\circle{3}}
\put(150,178){\circle*{3}}
\put(155,117){\circle{3}}
\put(160,172){\circle*{3}}
\put(165,111){\circle{3}}
\put(170,166){\circle*{3}}
\put(175,105){\circle{3}}
\put(180,160){\circle*{3}}
\put(185,99){\circle{3}}
\put(190,154){\circle*{3}}
\put(195,93){\circle{3}}
\put(200,148){\circle*{3}}
\put(130,50){\line(5,-3){60}}
\put(130,50){\line(-5,3){75}}
\put(30,97){{\footnotesize $Z_m=mQ$}}
\end{picture}
\end{center}
\caption{The solution with the condition $( \eta_ 0 , Y_0 ) = (-1,11) \: \atop{(\zeta _0 , Z_0 ) = (-1,-9)}$}
\end{figure}

If $m \leq -16 $, then the solution is in the form of Proposition \ref{prop:sol<<} (i.e.~of Type $--$), and it is written as 
\begin{align}
& ( \eta_ m , Y_m ), \: (\zeta _m, Z_m)= \label{eq:m-inf} \\
& \left\{
\begin{array}{lll}
(+1,6j -35 ), & ( +1,3j -38 ) & (m= -3j , \: j \geq 6) \\
(+1,6j -24 ), & ( +1,3j +14 ) & (m= -3j +1 , \: j \geq 6) \\
(+1,6j +14 ), & ( +1,3j ) & (m= -3j +2, \: j \geq 6) .
\end{array}
\right.
\nonumber
\end{align}
In the case $-15\leq m\leq -8$ (resp. $1 \leq m\leq 7 $), the solution is of Type $-A$ (resp.~of Type $+B$).
If $m\geq 7$, then the solution is written in the form of Proposition \ref{prop:sol>>} (i.e.~of Type $++$), and it is written as 
\begin{align}
& ( \eta_ m , Y_m ), \: (\zeta _m, Z_m)= \label{eq:m+inf} \\
& \left\{
\begin{array}{lll}
(-1,-6j +20 ), & ( -1,15 ) & (m= 2j , \: j \geq 3) \\
(+1,-6j -41 ), & ( +1,-15 ) & (m= 2j +1 , \: j \geq 3) .
\end{array}
\right.
\nonumber
\end{align}
It is apparent from Figure 5 that the tendency that the inequality $Y_m <0 $ (below the $m$-axis) or $mQ -Z_{m} <0$ (above the line $Z_m=mQ$) holds increases as the value $m$ increases, and it is also apparent for solutions in Eqs.(\ref{eq:pertex1}), (\ref{eq:pertex2}), (\ref{eq:pertex3}) and (\ref{eq:pertex4}) (see Figures 3 and 4).


We conjecture that, if a solution of p-ud PII does not have forward indefinite evolution nor backward indefinite evolution for all $m \in \mathbb{Z}$, then there exist some values $m'$ and $m''$ such that the solution is written in the form of Type $--$ for $m<m'$ and in the form of Type $++$ for $m>m''$.
Then we propose a problem that how the asymptotics as $m \to -\infty $ is concerned with the asymptotics as $m \to +\infty $. 
In the example given in Figure 5, the asymptotics as $m \to -\infty $ given in Eq.(\ref{eq:m-inf}) is connected with the asymptotics given in Eq.(\ref{eq:m+inf}) as $m \to +\infty $.

In order to study solutions of $q$-Painlev\'e equations of other types, ultradiscretization with parity variables of them and analysis of the ultradiscrete solutions should be performed.
Note that p-ud Painlev\'e III (resp. p-ud Painlev\'e VI) was derived in \cite{Iso} (resp. \cite{TT}).

\section*{Acknowledgment}
The authors thank Shin Isojima for discussions and valuable comments.
The second author was supported by JSPS KAKENHI Grant Number JP26400122 and by Chuo University Overseas Research Program.

\appendix
\section{Summary of ultradiscrete limit of determinant-type solutions in \cite{Iga}.}
It is shown in \cite{HKW} that q-PII (Eq.(\ref{eq:qPII})) with $a=q^{2N+1}$ $(N\in\mathbb{Z})$ admits a class of special solutions. 
Eq.(\ref{eq:qPII}) with $a=q^{2N+1}$ is solved by
\begin{align}
z^{(N)}(q^m) &=
\begin{cases}
\dfrac{g^{(N)}(m) g^{(N+1)}(m+1)}{q^N g^{(N)}(m+1) g^{(N+1)}(m)} & (N\geq0), \\
\dfrac{g^{(N)}(m) g^{(N+1)}(m+1)}{q^{N+1} g^{(N)}(m+1) g^{(N+1)}(m)} & (N<0), \\
\end{cases}\label{eq:gtozm} 
\end{align}
\begin{align}
& g^{(N)}(m) =  \label{eq:g_detm}\\
& \begin{cases}
\begin{vmatrix}
w(m) & w(m+2) & \cdots & w(m+2N-2) \\
w(m-1) & w(m+1) & \cdots & w(m+2N-3) \\
\vdots & \vdots & \ddots & \vdots \\
w(m-N+1) & w(m-N+3) & \cdots & w(m+N-1)
\end{vmatrix} & (N>0), \\
1 & (N=0), \\
\begin{vmatrix}
w(m-1) & w(m-3) & \cdots & w(m-2|N|+1) \\
w(m) & w(m-2) & \cdots & w(m-2|N|+2) \\
\vdots & \vdots & \ddots & \vdots \\
w(m+|N|-2) & w(m+|N|-4) & \cdots & w(m-|N|) \\
\end{vmatrix} & (N<0), \\
\end{cases} \nonumber
\end{align}
where $w(m)$ is a solution of $q$-Airy equation
\begin{equation}
w(m+1) - q^m w(m) + w(m-1) = 0 . \label{eq:qAiry}
\end{equation}

We review the ultradiscrete limit of the determinant-type solutions $z^{(M)}(q^m)$ for $M \in\mathbb{Z} _{<0}$ by following section 4 of \cite{Iga}. 
Recall that the $q$-Airy equation has special solutions, the $q$-Ai function $a(m)$ and the $q$-Bi function $b(m)$ (see \cite{IIT} for the expression), and the general solution is given by the linear combination. We express a solution of the $q$-Airy equation as
\begin{equation} \label{eq:w}
w(m) = \alpha e^{A'/\varepsilon} a(m) + \beta e^{B'/\varepsilon} b(m),
\end{equation}
where $\alpha , \beta \in \{ \pm 1 \}$ and $A',B' \in \mathbb{R}$.

Let $M \in \mathbb{Z}_{<0} $ and set $N=-M$.
The p-ud analogue of $g^{(N)}(m)$ for $N \in \mathbb{Z}_{\geq 0}  $ was calculated in \cite[Proposition 3]{IIT}.
On the other hand, it follows from Eq.(\ref{eq:g_detm}) that
\begin{equation}
g^{(M)}(m) = g^{(N)}(m-N) . \label{eq:NtoM}
\end{equation}
By combining them, we have
\begin{prop}\label{prop:gpu_M}
For $m \leq M+1 $, the p-ud analogue of $g^{(M)}(m)$ is given by
\begin{align}
& (\gamma^{(M)}(m), G^{(M)}(m)) = \label{eq:gpu_M}\\
& \begin{cases}
(\gamma_0^{(M)}(m), G_0^{(M)}(m)) & (B'-A' < f_1^{(M)}(m)) \\
(\gamma_k^{(M)}(m), G_k^{(M)}(m)) & (f_k^{(M)}(m) \le B'-A' < f_{k+1}^{(M)}(m)) \\
(\gamma_{-M}^{(M)}(m), G_{-M}^{(M)}(m)) & (f_{-M}^{(M)}(m) \le B'-A' ),
\end{cases} \nonumber
\end{align}
where
\begin{align}
\gamma_k^{(M)}(m) =& (-1)^{Mk-k(k+1)/2} \alpha^{M-k} \beta^k, \label{eq:gamma_k_M} \\
G_k^{(M)}(m)
=& (-M-k)A' + kB' + \biggl[ \left(-k - \frac12 M \right)m^2 \nonumber \\
& \! \! \! \! + \left\{-3k^2 + (-4M+1)k - \frac{M(M-2)}{2} \right\}m - \frac83k^3  \nonumber \\
& \! \! \! \! \! \! \! \! \! \! \! \! + \left(-5M+\frac32 \right)k^2 + \left(-3M^2+2M+\frac16\right)k - \frac{M(M-1)(M-2)}{6} \biggr]Q, \nonumber \\
f_k^{(M)}(m) =& G_{k-1}^{(M)}(m) - G_k^{(M)}(m) - A' + B' \nonumber \\
=& \left\{ (m+2M+3k-2)^2 - (M+k)(M+k-1) \right\} Q, \nonumber 
\end{align}
$k = 0, 1, \ldots, -M$.
\end{prop}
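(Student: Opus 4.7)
The plan is to reduce Proposition~\ref{prop:gpu_M} to the corresponding formula for positive upper index established in \cite[Proposition~3]{IIT}, using the determinantal identity (\ref{eq:NtoM}). Set $N := -M \in \mathbb{Z}_{\geq 1}$. The identity $g^{(M)}(m) = g^{(N)}(m-N) = g^{(N)}(m+M)$ is an exact equality at the level of $q$-functions, so it descends to the ultradiscrete limit: the p-ud analogue $(\gamma^{(M)}(m), G^{(M)}(m))$ coincides with $(\gamma^{(N)}(m+M), G^{(N)}(m+M))$. By \cite[Proposition~3]{IIT}, the latter is a piecewise function of $B'-A'$ with $N+1$ pieces indexed by $k \in \{0, 1, \dots, N\}$, with explicit amplitudes $G_k^{(N)}(m')$, signs $\gamma_k^{(N)}(m')$, and thresholds $f_k^{(N)}(m')$.

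The remainder of the proof is then an algebraic verification that the substitution $m' = m + M$, combined with $N = -M$, turns these three quantities into $G_k^{(M)}(m)$, $\gamma_k^{(M)}(m)$, and $f_k^{(M)}(m)$ as defined in (\ref{eq:gamma_k_M}). For the amplitude, one expands $(m+M)^2$ and $(m+M)$ inside the quadratic-in-$m'$ part of $G_k^{(N)}(m')$ and regroups; the constant-in-$m$ part then reproduces the cubic-in-$k$ polynomial displayed in the statement. For the sign, one uses $\alpha, \beta \in \{\pm 1\}$, so that $\alpha^{M-k} = \alpha^{-N-k} = \alpha^{N+k}$ and $(-1)^{Mk - k(k+1)/2} = (-1)^{Nk + k(k+1)/2}$, to identify the expression given here with the sign from \cite[Proposition~3]{IIT}. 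For the thresholds, the factored form $\{(m+2M+3k-2)^2 - (M+k)(M+k-1)\}Q$ follows directly from evaluating $f_k^{(N)}(m')$ at $m' = m+M$. Finally, the range condition $m \leq M+1$ translates under $m' = m+M$ into the range of validity of \cite[Proposition~3]{IIT} (namely $m' \leq -2N+1 = 2M+1$).

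The main obstacle is purely computational: one must verify that the cubic-in-$k$ coefficient in $G_k^{(M)}(m)$ matches $G_k^{(N)}(m+M)$ after the substitution, and that the piecewise thresholds remain monotone in $k$ (so that the same index $k$ which selects the $k$-th piece for $g^{(N)}(m+M)$ selects the $k$-th piece for $g^{(M)}(m)$). Conceptually, however, no new ultradiscrete argument is required beyond the shift identity (\ref{eq:NtoM}) and the already-established result for the positive-$N$ case.
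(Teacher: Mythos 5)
Your proposal is correct and follows exactly the paper's route: the paper also derives Proposition \ref{prop:gpu_M} by combining the shift identity $g^{(M)}(m)=g^{(N)}(m-N)$ of Eq.(\ref{eq:NtoM}) with the positive-index result of \cite[Proposition~3]{IIT}, leaving the substitution $m\mapsto m+M$, $N=-M$ as an implicit algebraic check. The only difference is that you spell out the sign and threshold verifications that the paper compresses into ``By combining them, we have.''
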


We may write the p-ud analogue of \eqref{eq:gtozm} as follows:
\begin{align}
& \zeta^{(M)}(m) = \gamma^{(M+1)}(m+1) \gamma^{(M+1)}(m) \gamma^{(M)}(m+1) \gamma^{(m)}(m), \label{eq:GtoZ_M} \\
& Z^{(M)}(m) = G^{(M+1)}(m+1) - G^{(M+1)}(m) - G^{(M)}(m+1) \nonumber \\
& \qquad \qquad \quad + G^{(M)}(m) - (M+1)Q . \nonumber
\end{align}
For $m\le M+1$, we set
\begin{align}
& h_{{\rm I},l}^{(M)} (m) = f_l^{(M)}(m), \ h_{{\rm II},l}^{(M)} (m) = f_l^{(M)}(m+1), \nonumber \\
& h_{{\rm III},l}^{(M)} (m) = f_l^{(M+1)}(m), \ h_{{\rm IV},l}^{(M)} (m) = f_l^{(M+1)}(m+1),
\end{align}
where $f_l^{(M)}(m)$ was defined in \eqref{eq:gamma_k_M}.
Then we have
\begin{equation}
h_{{\rm I},l}^{(M)} (m) < h_{{\rm II},l}^{(M)} (m) < h_{{\rm III},l}^{(M)} (m) < h_{{\rm IV},l}^{(M)} (m) = h_{{\rm I},l+1} ^{(M)}(m) 
\label{eq:f_ineq_M}
\end{equation}
for $l = 1, \ldots, -M$.
\begin{prop}\label{prop:zpu_M}
For $m\le M+1$, we have
\begin{align}
& (\zeta^{(M)}(m), Z^{(M)}(m))= \\
& 
\begin{cases}
(+1, (-m-2M-1)Q) & (B'-A' < h_{{\rm I},1}^{(M)}(m) \\
(\zeta_{{\rm I},l}^{(M)}(m), Z_{{\rm I},l}^{(M)}(m)) & (h_{{\rm I},l}^{(M)}(m) \le B'-A' < h_{{\rm II},l}^{(M)}(m)) \\
(\zeta_{{\rm II},l}^{(M)}(m), Z_{{\rm II},l}^{(M)}(m)) & (h_{{\rm II},l}^{(M)}(m) \le B'-A' < h_{{\rm III},l}^{(M)}(m)) \\
(\zeta_{{\rm III},l}^{(M)}(m), Z_{{\rm III},l}^{(M)}(m)) & (h_{{\rm III},l}^{(M)}(m) \le B'-A' < h_{{\rm I},l+1}^{(M)}(m)) \\
(+1, mQ) & (h_{{\rm II},-M}^{(M)}(m) \le B'-A'), \\
\end{cases} \nonumber
\end{align}
where
\begin{align}
&\begin{cases}
\zeta_{{\rm I},l}^{(M)}(m) = (-1)^{M+l} \alpha \beta, \\
Z_{{\rm I},l}^{(M)}(m) = B'-A' - \{m^2+(6l+4M-3)m \\
\qquad \qquad \qquad + 8l^2+(10M-7)l+3M^2-5M+1\}Q,
\end{cases}\\
&\begin{cases}
\zeta_{{\rm II},l}^{(M)}(m) = +1, \\
Z_{{\rm II},l}^{(M)}(m) = (m+2M+2l)Q,
\end{cases} \nonumber \\
&\begin{cases}
\zeta_{{\rm III},l}^{(M)}(m) = (-1)^{M+l+1} \alpha \beta, \\
Z_{{\rm III},l}^{(M)}(m) = A' - B' + \{m^2+(6l+4M+1)m \\
\qquad \qquad \qquad + 8l^2+(10M+1)l+3M^2+M\}Q,
\end{cases}
\nonumber 
\end{align}
for $l=1, 2, \ldots, -M$.
\end{prop}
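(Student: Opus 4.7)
The plan is to derive the proposition directly from Proposition~\ref{prop:gpu_M} via the defining relation~\eqref{eq:GtoZ_M}, which expresses $\zeta^{(M)}(m)$ as a product of four signs $\gamma^{(M')}(m')$ and $Z^{(M)}(m)$ as an alternating sum of four amplitudes $G^{(M')}(m')$ for $(M',m') \in \{(M,m),(M,m+1),(M+1,m),(M+1,m+1)\}$. By construction, $h_{\mathrm{I},l}^{(M)}(m)$, $h_{\mathrm{II},l}^{(M)}(m)$, $h_{\mathrm{III},l}^{(M)}(m)$, $h_{\mathrm{IV},l}^{(M)}(m)$ are exactly the thresholds at which these four $g$-values switch from case $k=l-1$ to case $k=l$ in Proposition~\ref{prop:gpu_M}, so the chain~\eqref{eq:f_ineq_M} partitions the $(B'-A')$-axis into the five ranges appearing in the statement.

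In each sub-range I would first record which case-index $k$ applies to each of the four determinants. For instance, in $h_{\mathrm{I},l}^{(M)}(m) \le B'-A' < h_{\mathrm{II},l}^{(M)}(m)$ the determinant $g^{(M)}(m)$ has just jumped to case $k=l$ while the other three remain in case $k=l-1$, because $B'-A' < h_{\mathrm{II},l}^{(M)}(m) \le h_{\mathrm{III},l}^{(M)}(m) \le h_{\mathrm{IV},l}^{(M)}(m)$. The intermediate sub-ranges are analogous, while the two boundary ranges correspond respectively to all four determinants being in case $k=0$ and to $g^{(M)}(m), g^{(M)}(m+1)$ being in case $k=-M$ with $g^{(M+1)}(m), g^{(M+1)}(m+1)$ in their final case $k=-M-1$ (the last admissible index for $g^{(M+1)}$ being $-M-1$).

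Once the indices are identified, both $\zeta^{(M)}(m)$ and $Z^{(M)}(m)$ are given explicitly in $l,m,M$ through \eqref{eq:gamma_k_M}. The parity $\zeta^{(M)}(m)$ is a product of four $\gamma_k$-factors; since $\alpha,\beta\in\{\pm 1\}$ only the exponents mod $2$ matter, and a short check shows that the exponents of $\alpha$ and $\beta$ are both odd in Cases I, II, III and both even in the boundary cases, giving the claimed factor $\alpha\beta$ or $+1$; the $(-1)$-exponent then reduces modulo $2$ to $M+l$, $0$, $M+l+1$, $0$, $0$ in the five sub-ranges respectively. For the amplitude, writing $P_k^{(M)}(m) := G_k^{(M)}(m)+(M+k)A'-kB'$ separates the $A',B'$-linear part from the $Q$-polynomial part; reading off the coefficients of $A'$ and $B'$ in the alternating sum immediately produces the header $-(B'-A')$, $0$, $+(B'-A')$, $0$, $0$, while the remaining $Q$-polynomial identity in $m,l,M$ must be verified in each case.

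The main obstacle will be this last polynomial identity: the telescoping combination $G_{l-1}^{(M+1)}(m+1)-G_{l-1}^{(M+1)}(m)-G_{l-1}^{(M)}(m+1)+G_l^{(M)}(m)-(M+1)Q$, together with its analogues for Cases II and III, involves cubic-in-$k$ cancellations across two different $k$-values and two different $M$-values, and each of the three quadratic expressions $Z_{\mathrm{I},l}^{(M)}(m)$, $Z_{\mathrm{II},l}^{(M)}(m)$, $Z_{\mathrm{III},l}^{(M)}(m)$ has to be recovered exactly. This step is routine but bookkeeping-heavy, and is most efficiently carried out by treating the $Q$-coefficient in \eqref{eq:gamma_k_M} as a cubic polynomial in $k$ (with coefficients depending on $m,M$) and computing finite differences in $k$ and $m$ separately before substituting back.
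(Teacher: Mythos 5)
Your overall strategy is exactly the one the paper sets up: the paper states Proposition~\ref{prop:zpu_M} without giving a proof (deferring to \cite{Iga}), but the surrounding material --- Eq.(\ref{eq:GtoZ_M}), the definition of the thresholds $h_{{\rm I},l}^{(M)},\dots,h_{{\rm IV},l}^{(M)}$ as the switching points of the four determinants, and the ordering chain (\ref{eq:f_ineq_M}) --- is precisely the scaffolding for the argument you describe. Your identification of which of the four determinants sits in which case of Proposition~\ref{prop:gpu_M} on each of the five sub-ranges is correct, including the observation that $g^{(M+1)}$ has no threshold at index $-M$ (its last admissible index being $-M-1$), which is why the final range begins at $h_{{\rm II},-M}^{(M)}(m)$ and why $Z_{{\rm II},-M}^{(M)}(m)=mQ$ persists there.

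However, two of the computational outcomes you assert are wrong, and they would propagate into incorrect formulas if followed literally. First, the $(B'-A')$-headers you list for Cases I and III are interchanged: the very combination you write down for Case I, namely $G_{l-1}^{(M+1)}(m+1)-G_{l-1}^{(M+1)}(m)-G_{l-1}^{(M)}(m+1)+G_{l}^{(M)}(m)-(M+1)Q$, has linear part $-A'+B'=B'-A'$ (the two $G^{(M+1)}_{l-1}$ terms carry identical $A',B'$-parts and cancel, while $-G_{l-1}^{(M)}+G_{l}^{(M)}$ contributes $(M+l-1)A'-(l-1)B'+(-M-l)A'+lB'=-A'+B'$), consistent with $Z_{{\rm I},l}^{(M)}(m)=B'-A'-\{\cdots\}Q$; symmetrically, Case III (where only $g^{(M+1)}(m+1)$ lags behind in case $l-1$) gives $A'-B'$. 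Second, in Case II the four sign factors are $\gamma_{l-1}^{(M+1)}(m+1)\,\gamma_{l-1}^{(M+1)}(m)\,\gamma_{l}^{(M)}(m+1)\,\gamma_{l}^{(M)}(m)$, whose total $\alpha$- and $\beta$-exponents are $4(M-l)+4$ and $4l-2$, both even; this is exactly what forces $\zeta_{{\rm II},l}^{(M)}(m)=+1$ with no $\alpha\beta$ factor, contrary to your claim that the exponents are odd in all of Cases I, II and III. With these two corrections your plan goes through, and the remaining cubic-in-$k$ cancellations in the $Q$-polynomial part are, as you say, routine bookkeeping.
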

We assume that the values of $A',B'$ and $Q$ are chosen as $m_0$ satisfies $m_0 \leq \min [ 3M+1, -M(M+1)/2 -1 ]$ and $m_0^2Q \le B'-A'<(m_0+1)^2Q$.
Set $P_{0} = (m_0 + 1)^2Q$ and $P_j = \{m_0^2 - (M+j)(M+j-1)\}Q $ $(j = 1, \ldots, -M)$.
Then there exists an integer $k_0 \in \{ 0 , 1 , \cdots ,-M-1 \}$ such that $P_{k_0 +1} \le B'-A' < P_{k_0}$ holds.
Using these notation, the result is written as follows:
\begin{thm} \label{thm:zpum_0_M} $($c.f. \cite[Theorem 5]{Iga}$)$
Assume that we have $m_0$ and $k_0 $ mentioned above for assigned values of $A',B'$ and $Q$.
Then the following function $(\zeta^{(M)}(m), Z^{(M)}(m))$ is obtained by the p-ud limit of the solution of $q$-PII in terms of determinants.\\
(I) If $m \leq m_0 + M$, then 
\begin{equation}
(\zeta^{(M)}(m), Z^{(M)}(m)) = (+1, mQ).
\end{equation}
(II)  If $m_0 + M + 1 \leq m \leq m_0-2M-3k_0$ $(k_0\neq 0)$ or $m_0 + M + 1 \leq m \leq m_0-2M-1$ $(k_0=0)$, then
\begin{align}
& (\zeta^{(M)}(m), Z^{(M)}(m)) = \nonumber \\
& \begin{cases}
(+1, (m_0+M+j)Q) & (m=m_0 + M + 3j - 2) \\
((-1)^{j+1} \alpha \beta, B'-A'-(m_0^2+m_0-j^2-M)Q) & (m=m_0 + M + 3j -1) \\
((-1)^{j+1} \alpha \beta, A'-B'+(m_0^2+m_0-j^2+2j+M)Q) & (m=m_0 + M + 3j),
\end{cases}
\end{align}
where $1 \leq j \leq -M-k_0$.\\
(III) If $m_0 - 2M - 3k_0 + 1 \leq m \leq m_0 - 2M - 1$ and $k_0 \neq 0$, then
\begin{align}
& (\zeta^{(M)}(m), Z^{(M)}(m)) = \nonumber \\
&  \begin{cases}
((-1)^{j+1} \alpha \beta, B'-A'-(m_0^2-m_0-j^2-M)Q) & (m=m_0+M+3j-2) \\
((-1)^{j+1} \alpha \beta, A'-B'+(m_0^2-m_0-j^2+2j+M)Q) & (m=m_0+M+3j-1)  \\
(+1, (m_0+M+j)Q) & (m=m_0+M+3j),
\end{cases}
\end{align}
where $-M-k_0+1 \leq j \leq -M$ for the first case and $-M-k_0+1 \leq j \leq -M-1$ for the second and the third cases.\\
(IV) If $m_0 - 2M -1 \leq m \leq M+1$ $(k_0\neq 0)$ or $m_0 - 2M \leq m \leq M+1$ $(k_0=0)$, then
\begin{equation}
(\zeta^{(M)}(m), Z^{(M)}(m)) = (+1, (-m-2M-1)Q).
\end{equation}
\end{thm}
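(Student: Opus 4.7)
The proof plan is to specialize Proposition \ref{prop:zpu_M} to the regime fixed by the hypotheses. For each integer $m\le M+1$, that proposition records $(\zeta^{(M)}(m), Z^{(M)}(m))$ according to which of the intervals bounded by the thresholds $h_{\bullet,l}^{(M)}(m)$ contains $B'-A'$. The plan is to show that, under the conditions $m_0^2 Q \le B'-A' < (m_0+1)^2 Q$ and $P_{k_0+1}\le B'-A'<P_{k_0}$, each integer $m$ in ranges (I)--(IV) falls into a uniquely determined interval, and that the formula from Proposition \ref{prop:zpu_M} for that interval, after substituting the appropriate index $l$, matches the one displayed in the theorem.

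First I would rewrite the hypotheses in a form convenient for comparison with the thresholds. Using $Q<0$, they become
\begin{equation*}
(m_0+1)^2 < (B'-A')/Q \le m_0^2, \quad m_0^2 - (M{+}k_0)(M{+}k_0{-}1) < (B'-A')/Q \le m_0^2 - (M{+}k_0{+}1)(M{+}k_0),
\end{equation*}
so that all subsequent comparisons reduce to polynomial inequalities in $m$, $m_0$, $k_0$. For case (I), the target $(+1, mQ)$ corresponds to the last branch of Proposition \ref{prop:zpu_M}; I would compute $h_{\text{II},-M}^{(M)}(m) = (m{-}M{-}1)^2 Q$, and use $m\le m_0+M$ together with $m_0<0$ to get $|m-M-1|>|m_0|$, hence $h_{\text{II},-M}^{(M)}(m)\le m_0^2 Q\le B'-A'$. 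For case (IV), the target $(+1,(-m-2M-1)Q)$ corresponds to the first branch $B'-A' < h_{\text{I},1}^{(M)}(m) = \{(m+2M+1)^2 - M(M+1)\}Q$; using $m_0-2M-1\le m\le M+1$ (so $(m+2M+1)^2 \le m_0^2$, via $m_0\le 3M+1$) together with $M(M+1)\ge (M+k_0)(M+k_0{-}1)$ for $k_0\ge 1$, the required bound follows (and the case $k_0=0$ is handled similarly by the sharper estimate $(m+2M+1)^2\le(m_0+1)^2$).

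Cases (II) and (III) are the heart of the proof. I would parametrize the integers in each range as $m = m_0 + M + 3j - r$ with $r \in \{0,1,2\}$ and, for each residue, identify the index $l$ in Proposition \ref{prop:zpu_M} (expected to depend affinely on $j$, e.g.\ $l = -M-j+1$ for part of case (II), shifted by one for case (III)) for which $\zeta_{\bullet,l}^{(M)}(m)$ and $Z_{\bullet,l}^{(M)}(m)$ reproduce the displayed parity and amplitude. The admissibility of that choice is then verified by evaluating $h_{\text{I},l}^{(M)}(m)$, $h_{\text{II},l}^{(M)}(m)$, $h_{\text{III},l}^{(M)}(m)$; each reduces, after expansion, to a comparison between $(B'-A')/Q$ and a quantity of the form $m_0^2 - (M+k)(M+k-1)$ with $k=k(j,k_0)$, and the $k_0$-hypothesis selects the correct branch.

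I expect the main obstacle to be the combinatorial bookkeeping: the dictionary $(j,r,k_0)\leftrightarrow l$ must be set up so that the entire window $m_0+M+1\le m\le M+1$ is covered without overlap; the sign shift $(-1)^{j+1}\alpha\beta$ at the transition $j = -M-k_0$ between (II) and (III) must be matched by the shift $l\mapsto l+1$ in the formula for $\zeta_{\bullet,l}^{(M)}(m)$; and the boundary integers $m = m_0-2M-1$ (for $k_0\ne 0$) or $m = m_0-2M$ (for $k_0=0$) must be assigned to the correct case, so that Proposition \ref{prop:zpu_M} outputs the $(+1,(-m-2M-1)Q)$ branch precisely on the stated range. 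Once this bookkeeping is fixed and one threshold comparison is carried out, the remaining verifications in each of the twelve subcases reduce to routine polynomial identities in $m, m_0, M, k_0$, and the theorem follows.
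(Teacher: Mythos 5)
Your proposal is correct and follows essentially the same route as the paper: the appendix is structured precisely so that Theorem \ref{thm:zpum_0_M} follows from Proposition \ref{prop:zpu_M} by locating $B'-A'$ among the thresholds $h_{\bullet,l}^{(M)}(m)$ using the defining inequalities for $m_0$ and $k_0$ (the paper defers the detailed bookkeeping to \cite{Iga}). Your threshold estimates for cases (I) and (IV) and your dictionary $l=-M-j+1$ for case (II) check out, so the remaining verifications are indeed the routine polynomial identities you describe.
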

By setting $C= B'-A'-(m_0^2+m_0)Q $ , we obtain Proposition \ref{prop:zpum_0_M}.


\begin{thebibliography}{99}
\bibitem{HKW} Hamamoto T., Kajiwara K., Witte N. S., 
Hypergeometric Solutions to the q-Painlev\'{e} Equation of Type $( A_1 + A_1^\prime)^{(1)}$, 
{\it Int. Math. Res. Not.} {\bf 2006} (2006), 84619, 26 pages.

\bibitem{Iga} Igarashi H., Analysis of $q$-Painlev\'e II equation by ultradiscretization with parity variables, Master's thesis, Chuo university, 2016 (Japanese).

\bibitem{IIT} 
Igarashi H., Isojima S., Takemura K.,
New Airy-type solutions of the ultradiscrete Painlev\'e II equation with parity variables,
{\it J. Phys. A} {\bf 49} (2016), 145207, 17 pages.

\bibitem{Iso} Isojima S., Bessel function type solutions of the ultradiscrete Painlev\'e III equation with parity variables, {\it Japan J. Ind. Appl. Math.} {\bf 34} (2017), 343--372.

\bibitem{IKMMS} Isojima S., Konno T., Mimura N., Murata M., Satsuma J., 
Ultradiscrete Painlev\'{e} II equation and a special function solution, {\it J. Phys. A} {\bf 44} (2011), 175201, 10 pages.

\bibitem{IS} Isojima S., Satsuma J., 
A class of special solutions for the ultradiscrete Painlev\'{e} II equation, 
{\it SIGMA} {\bf 7} (2011), 074, 9 pages.

\bibitem{IST} Isojima S., Satsuma J., Tokihiro T., 
Direct ultradiscretization of Ai and Bi functions and special solutions for the Painlev\'e II equation, 
{\it J. Phys. A} {\bf 45} (2012), 155203, 13 pages.

\bibitem{Mu} Murata, M., Exact Solutions with Two Parameters for an Ultradiscrete Painlev\'e Equation of Type $A_6^{(1)}$, {\it SIGMA} {\bf 7} (2011), 059, 15 pages.

\bibitem{RG} Ramani A. and Grammaticos B., Discrete Painlev\'e equations: coalescences, limits and degeneracies, {\it Phys. A} {\bf 228} (1996), 160--171.

\bibitem{TT} Takemura K., Tsutsui T., 
Ultradiscrete Painlev\'{e} VI with Parity Variables, 
{\it SIGMA} {\bf 9} (2013), 070, 12 pages.


\end{thebibliography}
\end{document}